\renewcommand{\P}{\mathbb{P}}
     \newcommand{\E}[0]{\mathbb{E}}
     \newcommand{\floor}[1]{\left\lfloor#1\right \rfloor}
\newcommand{\N}{\mathbb{N}}
 \newcommand{\df}[1]{\textbf{#1}}
\def\Z{\mathbb{Z}}
\newcommand{\bZ}{{\mathbb Z}}
\newcommand{\cU}{\mathcal U}
\newcommand{\cO}{\mathcal O}
\numberwithin{equation}{section}
\newtheorem{theorem}{Theorem}[section]
\newtheorem{lemma}[theorem]{Lemma}
\newtheorem{prop}[theorem]{Proposition}
\newtheorem{remark}[theorem]{Remark}
\definecolor{ashgrey}{rgb}{0.7, 0.75, 0.71}
\definecolor{faint}{rgb}{0.9, 0.9, 0.9}
\definecolor{lgrey}{rgb}{0.85, 0.85, 0.85}
\definecolor{dgrey}{rgb}{0.5, 0.5, 0.5}
\definecolor{mgrey}{rgb}{0.78, 0.78, 0.78}
\definecolor{lred}{rgb}{1, 0.8, 0.8}
\definecolor{dred}{rgb}{0.7, 0, 0}
\title{Two-stage Bootstrap Percolation}
\author[1]{Zihao Fang}
\author[2]{Janko Gravner}
\author[3]{David Sivakoff}
\affil[1]{Department of Mathematics, The Ohio State University, {\tt fang.735@osu.edu}}
\affil[2]{Department of Mathematics, University of California, Davis, 
{\tt gravner@math.ucdavis.edu}}
\affil[3]{Departments of Statistics and Mathematics, The Ohio State University, {\tt dsivakoff@stat.osu.edu}}
\begin{document}
\maketitle

\begin{abstract}
We introduce and study two variants of two-stage growth dynamics in $\bZ^2$ with state space $\{0,1,2\}^{\bZ^2}$. 
In each variant, vertices in state $0$ can be changed irreversibly to state $1$, and vertices in state $1$ can be changed permanently to state $2$. In the standard variant, a vertex flips from state $i$ to $i+1$ if it has at least two nearest-neighbors in state $i+1$. In the modified variant, a $0$ changes to a $1$ if it has both a north or south neighbor and an east or west neighbor in state $1$, and a $1$ changes to a $2$ if it has at least two nearest-neighbors in state $2$. We assume that the initial configuration  is given by a product measure with small probabilities $p$ and $q$ of $1$s and $2$s. For both variants, as $p$ and $q$ tend to $0$, if $q$ is large compared to $p^{2+o(1)}$, then the final density of $0$s tends to $1$. When $q$ is small compared to $p^{2+o(1)}$, for standard variant the final density of $2$s tends to $1$, while for the modified variant the final density of $1$s tends to $1$. In fact, for the modified variant, the final density of $2$s approaches $0$ regardless of the relative size of $q$ versus $p$. These results remain unchanged if, in either variant, a $1$ changes to a $2$ only if it has both a north or south neighbor and an east or west neighbor in state $2$.
An essential feature of these dynamics is that they 
are not monotone in the initial configuration.
\end{abstract}

\section{Introduction}

\subsection{Background and setup}
Arguably, 
the most basic and influential mathematical model for nucleation and growth is {\it bootstrap percolation\/}, a process that iteratively enlarges a set of
occupied vertices 
of a graph by adjoining vertices with at least a threshold number 
of already occupied neighbors. Bootstrap 
percolation and its generalizations have been studied from
a variety of aspects, yielding many surprising and deep results, of which we just list a few highlights \cite{AizenmanLebowitz, Holroyd2003,  Balogh2012, Bollobs2022, BBMS2022, BBMS22} and refer to the survey paper \cite{Morris2017} for a more comprehensive discussion. The process on
two-dimensional lattice 
with threshold $2$ remains the most well-known instance and a 3-state version is the subject of
the present paper.

We call our rule \textit{two-stage bootstrap percolation}. 
At each integer time $t\ge 0$, the evolving configuration is 
denoted by $\xi_t\in \{0,1,2\}^{\bZ^2}$.
In the initial configuration $\xi_0$, each vertex
$x\in\bZ^2$ is in one of three states $0$, $1$, $2$ 
according to the uniform 
product measure with probabilities
\begin{equation}
\label{eqn:initialconfiguration}
    \P(\xi_0(x) = 1) = p, \qquad \P(\xi_0(x) = 2) = q, \qquad  \text{and} \qquad \P(\xi_0(x) = 0) = 1 - p - q,
\end{equation}
where $p + q < 1$. We will assume $p,q>0$ unless explicitly stated otherwise.
We often make a short-hand reference to sites in state $0$, $1$, or $2$ as $0$s, $1$s, or $2$s. 
For $x\in\bZ^2$, we denote by $N_i(x,t)\in[0,4]$, 
$N_i^h(x,t)$, and $N_i^v(x,t)\in[0,2]$ the 
number of vertices in state $i$ at time $t$ among the four nearest 
neighbors of $x$, and among the two nearest vertical and 
horizontal neighbors of $x$. Then we let
$$
N_i'(x,t)=\mathbbm 1_{\{N_i^h(x,t)\ge 1\}}+\mathbbm 1_{\{N_i^v(x,t)\ge 1\}}\in [0,2] 
$$
be the 
number of coordinate directions that have a nearest neighbor of $x$ in state $i$ at time $t$. In the {\it standard variant\/}, 
the deterministic update rule is then given as follows:
\begin{itemize}
\item[(D0)] if $\xi_t(x)=0$, then $\xi_{t+1}(x)=1$ if $N_1(x,t)\ge 2$ and $\xi_{t+1}(x)=0$ otherwise; 
\item[(D1)] if $\xi_t(x)=1$, then $\xi_{t+1}(x)=2$ if $N_2(x,t)\ge 2$ and $\xi_{t+1}(x)=1$ otherwise;
\item[(D2)] if $\xi_t(x)=2$, then $\xi_{t+1}(x)=2$.
\end{itemize}
Thus, $1$s perform threshold-2 bootstrap percolation on $0$s, $2$s perform threshold-2 bootstrap percolation on $1$s, and state $2$ is terminal. 
In the {\it modified variant\/}, $N_1(x,t)$ in (D0) is replaced by 
$N_1'(x,t)$. We emphasize that all our theorems and proofs remain valid 
if we also replace $N_2(x,t)$ by 
$N_2'(x,t)$ in (D1), but for concreteness our 
modified variant has (D1) unchanged.

The classic result \cite{Holroyd2003} 
for the two-color dynamics --- the one with $q=0$ --- 
concludes
that the standard and modified variant both have exponentially rare nucleation in $1/p$. More precisely, if $T$ is the first time at which the 
origin becomes occupied, then $p\log T$ converges to a constant, in probability, as $p\to 0$; the limiting constant is $\pi^2/18$ for the 
standard and $\pi^2/6$ for the modified variant. More recent results determined the asymptotic behavior with great precision \cite{Hartarsky2019},
\cite{Hartarsky2024},
and also established the difference between two two variants in the scaling of second-order term \cite{Hartarsky2023}. 

Closer to our model is the {\it polluted 
bootstrap percolation\/} \cite{GravnerMcDonald}, where $2$s 
only act as obstacles without otherwise influencing $1$s; in our context, 
we can interpret 
this as the transition from $1$ to $2$ requiring impossible-to-satisfy threshold $5$, that is, with the condition $N_2(x,t)\ge 5$ in (D1). The fundamental 
question in this case is 
how the 
the relative size of $q$ vs.~$p$ 
influences the density of $1$s in the final configuration, 
as $(p,q)\to(0, 0)$. For the standard variant, the origin's final state is with high probability 
$1$ when 
$q\le cp^2$, while the origin is highly likely to remain forever $0$ when $q\ge Cp^2$ \cite{GravnerMcDonald}.
For 
the modified variant, the transition is between $q\le cp^2/(\log p^{-1})^{1+o(1)}$ and $q\ge Cp^2/(\log p^{-1})$ \cite{GHLS2025}. Thus, 
in the case of polluted bootstrap percolation, there is a disparity in the critical scaling 
between the two variants, albeit only logarithmic.

The difference in the update rule between 
the two variants may be considered minor and in the above two cases, and the discrepancy in the behavior is also rather subtle. 
By contrast, in our setting, 
the discrepancy is qualitative, as the nature of the 
phase transition is altered: for small $q$ the 
final state is dominated by $2$s in the standard variant and by $1$s in the 
modified one (see Theorem~\ref{thm:phase-transition}). Consequently, 
universality results for monotone rules \cite{Bollobs2022, BBMS22, Ghosh} do not have clear counterparts for $3$-color two-stage models. 

Similar models have been studied from variety of perspectives and 
we conclude this subsection with a brief review. Works on higher dimensional  
polluted models include \cite{GravnerHolroyd, GravnerHolroydSivakoff, Ding}. The paper  \cite{BresarHedzetHenning} considers extremal 
problems related to polluted dynamics. 
In an interacting particle system setup, \cite{Kordzakhia} 
studies the \textit{escape model} (later known as the \textit{chase-escape model}), where the ``predator" particles grow on the ``prey" particles, while the prey colonize the empty vertices. In common with our two-stage 
bootstrap percolation, the possible updates in
chase-escape model form an oriented linear graph on $3$ states. By contrast, in the cyclic update rules such a graph is an oriented cycle.
In this vein, \cite{Fisch} considered the \textit{cyclic cellular automaton} with state space $\{1,2,\dots, N\}^{\bZ^d}$, where a state-$i$ vertex changes its state to $(i + 1 )\ (\operatorname{mod} N)$ if by contact with a state-$(i+1) \ (\operatorname{mod} N)$ neighbor in each discrete-time step. Its continuous-time analogue, the \textit{cyclic particle system}, was analyzed in \cite{BramsonGriffeath2}. Two-dimensional threshold versions akin to ours were introduced in \cite{Fisch1991}.

Transitions among multiple types appear in many other models.
For example, 
\cite{CourtBlytheAllen} introduced the {\it stacked contact process\/}, in which transitions between any pair of states 0 (empty), 1 (healthy), 2 (infected) are allowed, as infection spreads through neighbors and offspring. We refer to 
the survey book \cite{Lanchier} for a review of recent development on such models.

\subsection{Main results}


Recall that our standard variant evolves according to (D0)--(D2), while the modified variant has $N$ replaced by $N'$ in (D0). We remark again that $N$ may be replaced by $N'$ in (D1) in either variant, without any change in the statements and proofs. One of the reasons for interest in these
dynamics is its fundamental nonmonotonicity in the initial configuration: 
adding more $2$s initially, 
for example, does not necessarily result in more $2$s later on. This is reflected in Theorem~\ref{thm:phase-transition} and presents the main 
technical challenge in our arguments.

\begin{theorem}\label{thm:0slose}
    For either variant of two-stage bootstrap percolation, if  $q \ll p^2/(\log(1/p))^2$, then
    \[
    \P(\text{the origin is eventually not in state 0}\, ) \to 1
    \]
    as $p \to 0$.
\end{theorem}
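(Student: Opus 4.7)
The plan is to show, via a multi-scale growth analysis, that with high probability a critical droplet of $1$'s forms near the origin and grows out to cover it, so that the origin eventually enters state $1$ (or subsequently $2$). The strategy follows the framework of Gravner--McDonald \cite{GravnerMcDonald} for polluted bootstrap percolation, treating initial $2$'s as sparse obstacles, together with additional estimates that account for the $1$-to-$2$ conversions intrinsic to our two-stage model.

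First, I would control the $2$-dynamics. A site becomes a $2$ only when at least two of its neighbors are in state $2$, so by a simple induction the set of sites ever in state $2$ in our process is dominated by threshold-$2$ bootstrap percolation on $\bZ^2$ starting from the initial $2$'s (no state constraints on the substrate). For $q \ll p^2/(\log 1/p)^2 \ll p$, the critical droplet scale of this auxiliary process is $\exp(\Theta(1/q))$, which is exponentially larger than the $1$-critical scale $L^* \sim \exp(\pi^2/(18p))$ (or its analogue in the modified variant). Hence, over the time window relevant for the $1$-growth to reach the origin, the $2$-region stays confined to small subcritical clusters around the initial $2$'s, and the effective density of obstacles remains $O(q)$.

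Second, I would run the Holroyd / Aizenman--Lebowitz hierarchy for $1$-growth inside a box of side $L^*$ about the origin: a seed of adjacent initial $1$'s successively fills a growing sequence of internally spanned rectangles, reaching scale $L^*$ and then ballistically covering the origin. At each scale $L$ in the hierarchy, the growth can fail if the $2$'s form a ``blocking'' configuration (fixed obstacles, by Step 1); summing the blocking probabilities across scales from $1/p$ up to $L^*$ gives a failure bound of the form $q\,(\log 1/p)^{O(1)}/p^2$, which is $o(1)$ under our hypothesis. An additional $\log(1/p)$ factor, absent in polluted BP, comes from the new effect in our model: a $1$ on the growth path may be converted to a $2$ before it propagates. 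The deeply subcritical bound from Step 1 shows such ``path-disrupting'' conversions are rare enough that a union bound over the chosen path succeeds.

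The main technical obstacle is the non-monotonicity of the dynamics in the initial configuration, emphasized just before the theorem statement. A monotone coupling to polluted bootstrap percolation is not available: a $1$-to-$2$ conversion can remove a $1$-neighbor needed for a downstream $0$-to-$1$ transition, so one cannot simply import the polluted-BP conclusion and be done. Instead, the hierarchy argument must be executed directly in our dynamics, combining the $2$-region control (Step 1) with the obstacle and conversion analysis (Step 2). The extra $(\log 1/p)^2$ factor in the assumption on $q$, compared to the polluted-BP threshold $q \le cp^2$ of \cite{GravnerMcDonald}, is exactly the price paid for this more delicate direct analysis.
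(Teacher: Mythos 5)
Your Step~1 is fine and matches the paper's strategy in spirit: the set of sites ever in state $2$ is indeed dominated by ordinary threshold-$2$ bootstrap percolation run from the initial $2$s, and its confinement inside a box of side $e^{O(1/p)}$ is exactly what the paper proves via an Aizenman--Lebowitz rectangle count (Lemma~\ref{lem:nointerventionof2s}), combined with the unit speed limit for $2$s arriving from outside that box. The genuine gap is in Step~2, in how the $1$s are transported from a nucleation site to the origin. The nearest $1$-nucleus is at distance of order $e^{\Theta(1/p)}$, while the per-site (or per-box) probability of meeting a $2$ or a locally blocking configuration is only polynomially small in $p$ (of order $q$, resp.\ $q(\log(1/p))^2/p^2$). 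Consequently ``a union bound over the chosen path'' cannot succeed: any predetermined path of length $e^{\Theta(1/p)}$ contains on average $q\,e^{\Theta(1/p)}\to\infty$ initial $2$s. Likewise, the claimed ``sum of blocking probabilities over scales from $1/p$ up to $L^*$'' is not of order $q(\log(1/p))^{O(1)}/p^2$: at scales $\ell$ comparable to $L^*=e^{\Theta(1/p)}$ the perimeter factor $\ell$ alone overwhelms any polynomial-in-$p$ local blocking probability. What is needed is an \emph{existence} statement for a route that avoids the obstacles, not a union bound along a fixed route. The paper gets this by renormalizing to $p$-boxes of side $2\lfloor(1/p)\log(1/p)\rfloor$ that are $1$-crossable (every row and column has a $1$, and the box together with its eight neighbors contains no $2$ at all --- affordable precisely because $q\ll p^2/(\log(1/p))^2$, which is where the two log factors really come from, not from $1\to2$ conversions), proving by a Peierls/duality argument that a highway of such boxes exists (Lemma~\ref{lem:highway}), and locating a $1$-nucleus within the first $p^4M$ boxes of the highway (Lemma~\ref{lem:find1-nucleus}) so that the $1$s reach the origin before time $M=\lfloor e^{C/p}\rfloor$, i.e.\ before exterior $2$s could possibly interfere. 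Your sketch replaces this existence argument by per-scale and per-path union bounds whose numbers do not close.

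A second problem is the modified variant, which the theorem also covers. Your blocking analysis is modeled on Gravner--McDonald, which is specific to the standard growth rule; for the modified rule, growth of $1$s past obstacles is qualitatively harder (a $0$ between two $2$s in a row is frozen forever, and the polluted threshold drops to $p^2/(\log(1/p))^{1+o(1)}$, which is the content of the much more involved GHLS analysis cited in the paper). So ``executing the hierarchy directly'' in the polluted environment would require a separate and substantially harder navigation argument for the modified variant, whereas the paper's requirement that crossable boxes be entirely $2$-free sidesteps obstacle navigation altogether and treats both variants uniformly, at the mere cost of the hypothesis $q\ll p^2/(\log(1/p))^2$ that you are given anyway.
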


\begin{theorem}\label{thm:2swininstandard}
    For the standard variant of two-stage bootstrap percolation, if $q \ll p^2/(\log(1/p))^2$, then
    \[
    \P(\text{the origin is eventually in state 2}\, ) \to 1
    \]
    as $p \to 0$.
\end{theorem}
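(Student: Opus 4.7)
By Theorem~\ref{thm:0slose}, it suffices to show that whp the origin does not stabilize in state~$1$. The high-level strategy exploits a separation of scales between the $1$-nucleation length $L_p := \exp(\pi^2/(18p))$ and the $2$-nucleation length $L_q := \exp(\pi^2/(18q))$: under the hypothesis $q \ll p^2/(\log(1/p))^2$ one has $1/q \gg 1/p$, so $L_q$ is vastly larger than $L_p$. I would work inside the box $B = B_{L_q}$ around the origin and analyze the dynamics in two phases: first, the initial $1$s fill the overwhelming majority of $B$ (treating the initial $2$s as obstacles), and then the initial $2$s nucleate and spread on this newly-built substrate of $1$s via standard two-color threshold-$2$ bootstrap percolation, eventually reaching the origin.

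For Phase~1, introduce the auxiliary ``frozen-$2$'' dynamics $\bar\xi$ obtained from $\xi$ by suppressing all $1 \to 2$ transitions; then $\bar\xi$ is exactly standard polluted bootstrap percolation with the initial $1$s as seeds and the initial $2$s as obstacles. Since $q \ll p^2/(\log(1/p))^2$ lies well below the polluted bootstrap threshold in the standard variant \cite{GravnerMcDonald}, a renormalization argument yields that whp, by some time $T_1$, the density of sites in $B$ still in state~$0$ in $\bar\xi$ is much smaller than $q^2$. The key technical step is then to transfer this information to the true dynamics via a non-monotone comparison: one must show that $\xi_{T_1}$ agrees with $\bar\xi_{T_1}$ outside a sparse set. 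Because each $1 \to 2$ transition in $\xi$ requires two neighbors already in state~$2$, and because a growing cluster of $2$s can nucleate (by two-color threshold-$2$ dynamics with seed density~$q$) only on the scale $L_q$, one expects that on a time scale $T_1 \ll L_q$ the $2$s in $\xi$ remain essentially a small local thickening of the initial density-$q$ set. Combining this with the Phase~1 fill in $\bar\xi$ shows that in $\xi_{T_1}$ all but a set of density $o(q^2)$ of sites in $B$ are in state~$1$ or~$2$, while every initial $2$ is still present.

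In Phase~2, starting from $\xi_{T_1}$, the $2$s perform (in effect) two-color threshold-$2$ bootstrap percolation on the substrate of $1$s, with the $o(q^2)$ remaining non-$1$ sites playing the role of obstacles. A second application of the standard-variant polluted bootstrap result \cite{GravnerMcDonald}, now with roles $p \leftarrow q$ and pollution density $\leftarrow o(q^2)$, shows that the $2$s fill $B$ whp within an additional time $O(L_q)$; in particular, the origin ends in state~$2$. The main obstacle is the non-monotone comparison in Phase~1: formally bounding the ``spurious'' $2$s created by time $T_1$ cannot be done by a pointwise coupling, and instead requires a careful multi-scale argument that simultaneously tracks the spread of $1$s on most of $B$ and confines any growing $2$-clusters to bounded size on the time scale $T_1$, so that neither their presence nor the obstacles they create can derail the eventual $2$-fill of $B$.
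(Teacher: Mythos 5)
Your two-phase picture (1s fill first on the $p$-scale, 2s spread later on the $q$-scale) matches the paper's coarse strategy, but both of your key steps have genuine gaps. The more serious one is Phase~2. You cannot conclude by ``a second application of the standard-variant polluted bootstrap result with $p\leftarrow q$ and pollution density $\leftarrow o(q^2)$'': that result is a statement about \emph{product-measure} initial data, whereas your configuration at time $T_1$ is dynamically generated and strongly correlated. In particular, the surviving $0$s are not a sparse independent field; they concentrate exactly where the initial $2$s cluster (e.g.\ a $0$ with three $2$-neighbors is permanent under the standard rule, and pockets enclosed by $2$s persist), which is precisely where a $2$-nucleus would have to form, so a density bound alone --- even the unsubstantiated $o(q^2)$ claim --- does not control placement or independence. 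Lower-bounding the probability that $2$s actually nucleate on the newly built substrate is the crux of the theorem, and the paper makes \emph{no} such estimate: instead it builds an infinite connected cluster of ``successful'' big boxes (side $\approx e^{\epsilon/q}(1/q)\log(1/q)$) whose crossings of good $q$-boxes have \emph{all} of their $0$s provably eliminated, shows that an explicit bounded (exponential in $1/q$) ``$2$-ignition'' configuration is compatible with this cluster and has positive probability, and invokes the Ergodic Theorem to produce it somewhere along the cluster; the $2$s then propagate along the crossings using the monotonicity statement of Lemma~\ref{lem:remove0s}. Note that this gives no control on where ignition occurs, so your restriction of the whole analysis to the box $B_{L_q}$ around the origin is also not justified.

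Phase~1 has the gap you yourself flag: the non-monotone comparison (``spurious $2$s stay a small local thickening up to time $T_1$'') is asserted, not proved, and it is not a routine step. The paper replaces your global density statement by local, verifiable conditions and deterministic lemmas: the fillability conditions (F1)--(F4) for $q$-boxes (a circuit of $1$-crossable $p$-boxes, no ``frames'', at most two $2$s per $5\times5$ square), Aizenman--Lebowitz-type bounds (Lemma~\ref{lem:Aizenman-Lebowitz}, used in (S3)) confining internally generated $2$-clusters to diameter $\lesssim 1/q$ through the relevant times, and the $0$-elimination Lemma~\ref{lem:0elimination}, which yields \emph{complete} elimination of $0$s in the center boxes along connected corridors (Proposition~\ref{prop:bigboxfilling}). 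Complete elimination along corridors, rather than a small density of leftover $0$s, is what the spread of $2$s actually requires, since a sparse but badly placed set of permanent $0$s can block threshold-$2$ growth. Finally, the intermediate claim that the polluted bootstrap result of \cite{GravnerMcDonald} gives leftover $0$-density $\ll q^2$ is not something that reference provides; it only gives that the origin is eventually occupied with high probability, with no quantitative rate of that form.
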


\begin{theorem}\label{thm:2sloseinmodified}
    For the modified variant of two-stage bootstrap percolation,
    \[
    \P(\text{the origin is never in state 2}\, ) \to 1
    \]
    as $(p, q) \to (0,0)$.
\end{theorem}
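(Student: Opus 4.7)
The plan is to bound the probability that the origin becomes $2$ by a two-regime case analysis, with different mechanisms in each regime.

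In the first regime, $q \gtrsim p^2/\log(1/p)$, I would reduce to polluted modified bootstrap percolation. The result of \cite{GHLS2025} shows that, with the initial $2$s treated as fixed obstacles, the polluted $1$-region does not reach the origin w.h.p.\ in this regime. Now in our two-stage dynamics, sites in state $2$ only grow over time and serve as additional obstacles for the modified $1$-rule. I would prove by a direct inductive coupling that the set $A_t$ of $1$s in our dynamics at time $t$ is contained in the set $\tilde A_t$ of $1$s in the polluted process with the same initial data: because $A_t$ is disjoint from the initial $2$s, the inductive hypothesis $A_t \cup B_t \subseteq \tilde A_t \cup \{\text{initial } 2\text{s}\}$ upgrades to $A_t \subseteq \tilde A_t$, and a new $1$ in our dynamics at time $t+1$ has its horizontal and vertical $1$-neighbors in $A_t \subseteq \tilde A_t$, so it becomes a $1$ in the polluted process as well. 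Hence the origin never becomes $1$ in our dynamics, and a fortiori never becomes $2$.

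In the second regime, $q$ is below the polluted threshold, so the polluted $1$-region does cover the origin and a new argument is needed. Here I would analyze the $2$-cluster $C$ containing the origin in the final configuration. Each non-initial $2$ is created via (D1) from two $2$-neighbors, so $C$ is generated by standard threshold-$2$ bootstrap percolation restricted to the $1$-substrate. The classical nucleation lower bound gives that $C$ of size $L$ must contain $\Omega(\sqrt{L})$ initial $2$s arranged in a staircase-type configuration, contributing a probability factor $q^{\Omega(\sqrt{L})}$. A first-moment sum over cluster shapes and positions near the origin should then bound the overall probability and send it to $0$ as $q \to 0$.

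The main obstacle lies in this second regime, because the naive first-moment sum diverges: on the infinite lattice $\bZ^2$, pure standard threshold-$2$ BP percolates for any $q > 0$, so summing over all connected shapes gives no useful bound. The key point is that our $2$-BP does not act on all of $\bZ^2$, only on the slowly-forming $1$-substrate, which in the modified variant is built up as unions of rectangles, each seeded by initial $1$s in every row and column. This forces each $2$-cluster to fit inside such a rectangular structure, cutting the effective number of admissible cluster shapes to a controllable quantity. Making this link rigorous --- tying the modified $1$-BP geometry to the combinatorics of $2$-cluster shapes, handling intermediate scales via multi-scale arguments, and carefully tracking the time-dependent substrate dynamics (including the nonmonotonicity noted in the introduction) --- is where I expect the bulk of the technical work of the proof to lie.
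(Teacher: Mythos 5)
Your first-regime coupling is fine as far as it goes: since a $0$ flips only when it sees current $1$s in both directions, and states only increase, an induction does show that every site that is ever $1$ in the two-stage dynamics is eventually $1$ in the polluted process, so the blocked phase of \cite{GHLS2025} transfers. (Two caveats: the polluted blocking result needs $q\ge Cp^2/\log(1/p)$ for a specific large $C$, so your two regimes do not actually cover the critical window between $cp^2/(\log(1/p))^{1+o(1)}$ and $Cp^2/\log(1/p)$; and the paper's argument makes no regime split at all.)

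The genuine gap is in your second regime, and it is not merely technical: the mechanism you propose cannot work, because you never identify the one feature of the modified rule that the whole theorem rests on. When $q\ll p^2/(\log(1/p))^2$, Theorem~\ref{thm:0slose} says the $1$s \emph{do} fill essentially everything around the origin, so the $2$s are eventually running standard threshold-$2$ bootstrap percolation at density $q$ on a nearly full substrate; a first-moment count of initial $2$s per cluster, or a restriction of cluster shapes coming from the rectangular geometry of the growing $1$-substrate, cannot prevent the $2$s from taking over --- indeed this is exactly what happens in the standard variant (Theorem~\ref{thm:2swininstandard}). What actually stops the $2$s in the modified variant is that a horizontal ``$202$'' (more generally $20\cdots02$) configuration makes its interior $0$s \emph{permanent}: they can never acquire a horizontal $1$-neighbor, so they never flip, and they survive as obstacles for the $2$s' growth forever. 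These blocking $0$s have density of order $q^2$ (boostable by any constant factor using longer intervals), which is supercritical for the polluted bootstrap percolation of $2$s on $1$s \cite{Gravner1997}. The paper's proof (Section~\ref{sec:preventspreadof2s}) then builds a ``protected region'' around the origin out of such blockades, using a Lipschitz-shell construction \cite{GS2020} together with a careful restriction argument (Proposition~\ref{prop:restriction}) to handle the nonmonotonicity --- the $2$s are needed to create the blocking $0$s but must be absent elsewhere near the shell. None of this is present in, or suggested by, your sketch, so the part you flag as ``the bulk of the technical work'' is pointed in a direction (combinatorics of $2$-cluster shapes on the $1$-substrate) that does not lead to a proof.
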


The following theorem easily follows from the preceding theorems and the results on polluted bootstrap percolation
in \cite{GravnerMcDonald, GravnerHolroydLeeSivakoff}.

\begin{theorem}\label{thm:phase-transition}
    For the standard variant, if $q \ll p^2/(\log(1/p))^2$, then
    \[
    \P(\text{the origin is eventually in state 2}\, ) \to 1,
    \]
    and if $q\gg p^2$, then
    \[
    \P(\text{the origin remains forever in state 0}\, ) \to 1.
    \]
    For the modified variant, if $q \ll p^2/(\log(1/p))^2$, then
    \[
    \P(\text{the origin is eventually in state 1}\, ) \to 1,
    \]
    and if $q\gg p^2/\log(1/p)$, then
    \[
    \P(\text{the origin remains forever in state 0}\, ) \to 1.
    \]

\end{theorem}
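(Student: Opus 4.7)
The four assertions split naturally into two groups. The two claims in the regime $q \ll p^2/(\log(1/p))^2$ are essentially restatements of what is already available: the standard-variant claim is literally Theorem~\ref{thm:2swininstandard}, while for the modified variant, Theorem~\ref{thm:0slose} (the origin is eventually not in state $0$) and Theorem~\ref{thm:2sloseinmodified} (the origin is never in state $2$) combine by a union bound to force the origin eventually into state $1$ with probability tending to $1$. This half of the theorem therefore requires no new input.

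For the two claims in the large-$q$ regimes, my plan is to couple our dynamics $\xi_t$ with the polluted bootstrap percolation $\eta_t$, both started from the common initial configuration $\xi_0 = \eta_0$ and both run with the matching variant (standard or modified, i.e.\ with $N_1$ or $N_1'$ in the activation rule). The polluted-BP results of \cite{GravnerMcDonald} (standard, $q \gg p^2$) and \cite{GravnerHolroydLeeSivakoff} (modified, $q \gg p^2/\log(1/p)$) give that under $\eta$ the origin remains in state $0$ for all times with probability tending to $1$. The coupling will transfer this conclusion to $\xi$.

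The heart of the argument is the pathwise inclusion
\[
\{x : \xi_t(x) = 1\} \subseteq \{x : \eta_t(x) = 1\} \qquad \text{for every } t \ge 0,
\]
proved by induction on $t$; the base case is $\xi_0 = \eta_0$. For the step, fix $x$ with $\xi_{t+1}(x) = 1$. If $\xi_t(x) = 1$, the induction hypothesis together with the permanence of $1$s in $\eta$ gives $\eta_{t+1}(x) = 1$. Otherwise $\xi_t(x) = 0$, and the $\xi$-activation rule (standard or modified) supplies the required $\xi$-$1$ neighbors of $x$, which by induction are $\eta$-$1$ neighbors; moreover $\eta_t(x) = 2$ is impossible, as that would force $\xi_0(x) = 2$ and hence $\xi_t(x) = 2$. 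Therefore either $\eta_t(x) = 1$ already or the matching polluted rule fires at $x$ and sets $\eta_{t+1}(x) = 1$. Contrapositively, if the origin ever becomes nonzero under $\xi$ it also becomes nonzero under $\eta$, and the desired conclusion follows.

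I do not expect a genuine obstacle here: the cited polluted-BP papers and Theorems~\ref{thm:0slose}--\ref{thm:2sloseinmodified} carry all the analytic weight. The one conceptual subtlety worth flagging is that the coupling is strictly one-directional --- it compares the $1$s of the two processes, not the $2$s --- which is consistent with the non-monotonicity emphasized in the introduction: one cannot hope for an analogous coupling controlling the final density of $2$s in $\xi$ from the polluted dynamics, since extra initial $2$s in $\xi$ may in fact suppress later $2$s.
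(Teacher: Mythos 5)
Your proposal is correct and follows essentially the same route as the paper: the small-$q$ claims are read off directly from Theorems~\ref{thm:2swininstandard}, \ref{thm:0slose} and \ref{thm:2sloseinmodified}, and the large-$q$ claims are imported from the polluted bootstrap percolation results of \cite{GravnerMcDonald, GravnerHolroydLeeSivakoff}, which is exactly what the paper does when it says the theorem ``easily follows.'' Your one-directional coupling $\{x:\xi_t(x)=1\}\subseteq\{x:\eta_t(x)=1\}$ (valid because $2$s never help activate $0$s and initial $2$s agree in both processes) simply makes explicit the comparison the paper leaves implicit, and the induction is sound.
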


The phase transition established in Theorem~\ref{thm:phase-transition} for the standard variant is illustrated 
in Figure~\ref{fig:sim}.

\begin{figure}[h!]
         \centering
         \includegraphics[width=0.2\linewidth]{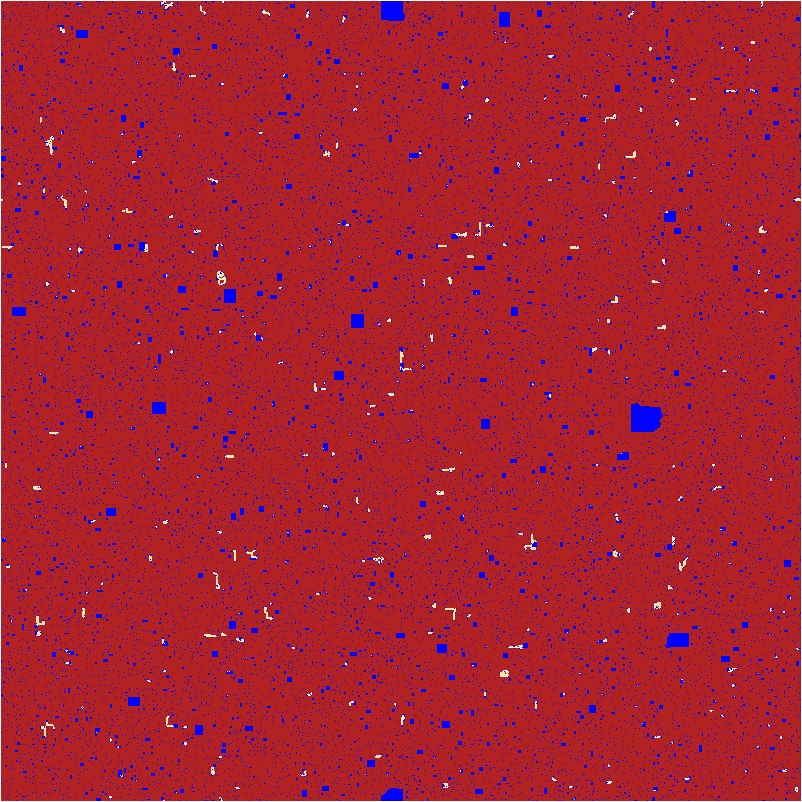}
         \includegraphics[width=0.2\linewidth]{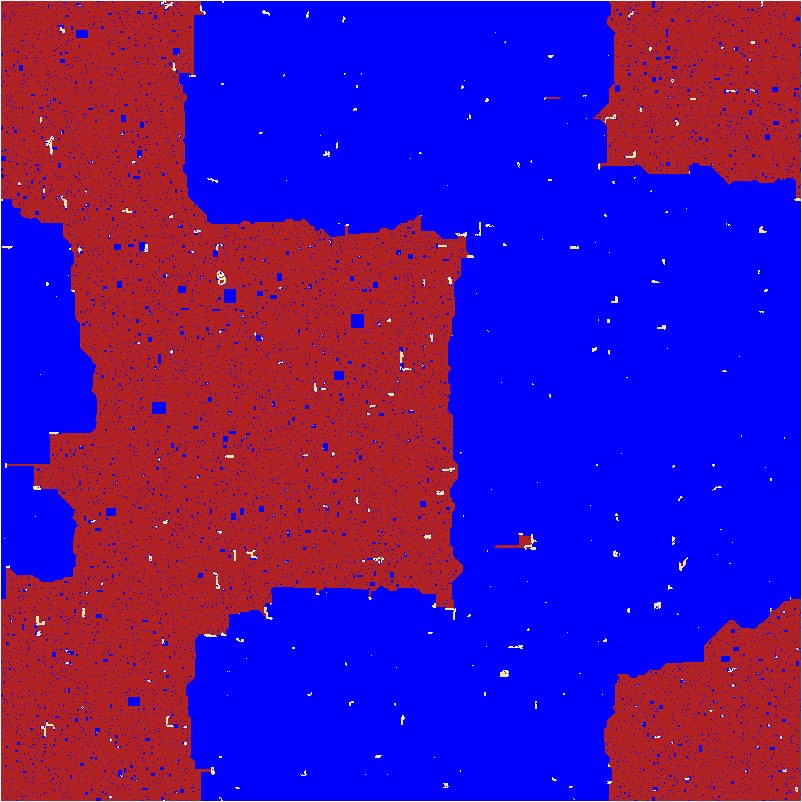}
         \includegraphics[width=0.2\linewidth]{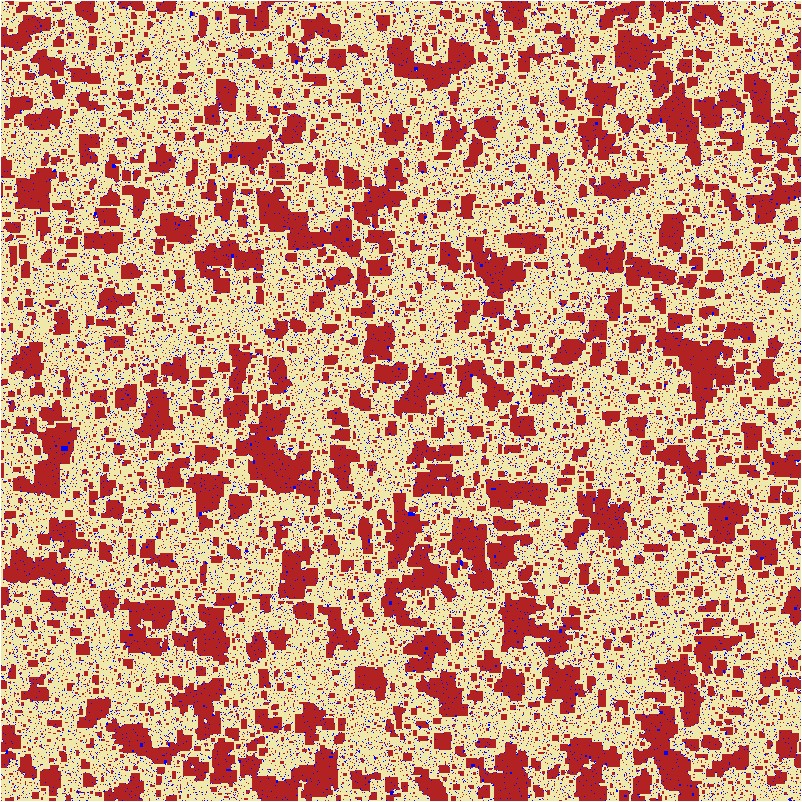}
          \includegraphics[width=0.2\linewidth]{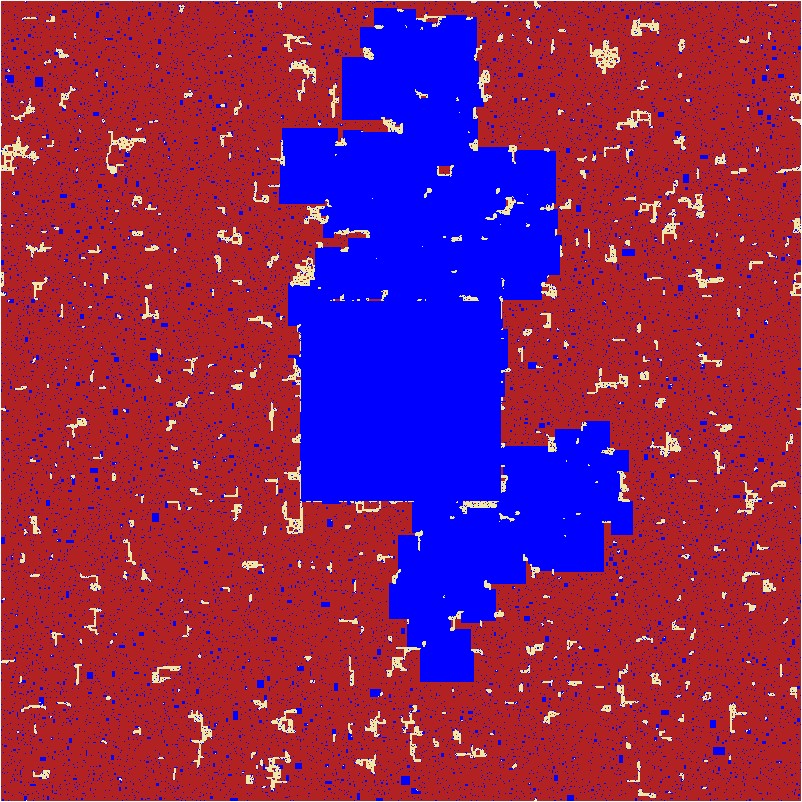}
         \caption{Simulations of the standard variant of two-stage bootstrap percolation with $q=0.04$ on $800\times 800$ discrete torus. The left two frames depict the same dynamics with $p=0.2$
         at two different times: at time $80$, most sites are in state $1$ (red); while at time $1000$, the $2$s (blue) have grown from nucleation centers to overtake
         a significant portion of the space and are on the way to controlling most of it. (It is challenging to design simulations in which 
         nucleation of $1$s and $2$s are both rare, due to 
         vastly different scales.) The middle frame is the final state with $p=0.08$, whereby
         $0$s (yellow) dominate (albeit not overwhelmingly). The final picture has intermediate $p=0.14$ and is at the final state. From the product measure, most sites in the square become
         permanently $1$ at this density, so we added a central $200\times 200$ 
         square of $2$s to test whether $2$s can nucleate; while the square 
         grows significantly, it is eventually stopped by remaining $0$s.}
         \label{fig:sim}
     \end{figure}

       \subsection{Organization and main ideas in the proofs}
       The 
       proof of Theorem~\ref{thm:0slose}  is given in Section~\ref{sec:eliminate-0s}, where we demonstrate that $1$s 
       are likely able to spread through boxes of diameter on the 
       order $(1/p)\log(1/p)$, from a box that is internally 
       filled by $1$s to the origin, and accomplish this 
       without interference by $2$s. This is an adaptation of a familiar argument  in polluted bootstrap percolation \cite{Gravner1997, GravnerHolroydSivakoff, GravnerHolroydLeeSivakoff}. 
       
       Our major effort 
       is devoted to the proof of Theorem~\ref{thm:2swininstandard} in Section~\ref{sec:occupation-2-standard}. The first two frames in Figure~\ref{fig:sim} suggest
       separation of scales: $1$s must dominate the 
       space long before $2$s are able to achieve much. 
       Our rigorous argument establishes this scenario and thus has to balance the scarcity of $2$s required for $1$s to spread and the presence of $2$s for them 
       to grow later on. Accordingly, we introduce boxes 
       of diameter on the order of $(1/q)\log(1/q)$ which can be covered 
       by $2$s provided: (1) {\it all\/} $0$s within them are eliminated; and (2) they border a box of the same size which has already been taken over by $2$s. The property (1) can be achieved by 
       constructing a circuit of vertices that eventually become $1$ 
       and then ensuring that within that circuit $0$s cannot persist; this elimination of 0s only occurs for the standard variant and is responsible for the difference between the two variants. The property 
       (2) is again a standard polluted bootstrap percolation construction. 
       Observe that the remaining $0$s present obstacles for the spread of $2$s; those obstacles will be there (see again the second frame of Figure~\ref{fig:sim}), but they are not frequent enough to significantly impede
       the growth of $2$s. 

       The relative scarcity of $2$s is the key to the argument sketched 
       so far, but now we have to ensure that the spread of $2$s can be 
       ``ignited.'' For this, we make no attempt at efficiency. Instead, 
       after we construct an infinite connected cluster of boxes that 
       satisfy (1) and (2) above, we simply show 
       that it is {\it possible\/} that a configuration of bounded size
       (albeit exponential in $1/q$)
       borders the cluster and is eventually filled 
       by $2$s. Then, ergodicity 
       implies that that such a configuration must appear, causing the 
       entire cluster, which contains the origin with high probability, 
       to be eventually covered by $2$s.

       By contrast, our proof of Theorem~\ref{thm:2sloseinmodified} in Section~\ref{sec:preventspreadof2s} establishes that, 
       in the modified variant, the $0$s which 
       never turn to $1$ {\it are\/} frequent enough to stop the spread 
       of $2$s. To see why, note that the $0$ in a ``$202$" configuration never changes its state. Such a configuration occurs with density about $q^2$, 
       which is already on the same order as the critical density for 
       the polluted model \cite{Gravner1997}. (Here, $2$s play the role of occupied sites and permanent 
       $0$s are polluted sites.) By considering any finite interval of 
       $0$s between two $2$s, we can increase the density of obstacles
       to any constant times $q^2$, making them supercritical.  We then 
       carry out the rescaling scheme from \cite{GS2020}, which is based on a ``protective shell'' construction obtained through duality methods used to construct Lipschitz random surfaces \cite{LipPerc, GrimmettHolroyd10, GrimmettHolroyd12}. Some 
       additional care is necessary due to non-monotonicity: 
       $2$s are needed 
       to ensure the presence of permanent $0$s, but are
       otherwise required to be absent near the boundary of the protective shell. 

       We conclude the paper with a discussion on related models and 
       a selection of open problems.

\section{Preliminaries}

We start by clarifying some notions that will be used substantially later.
For a set $S\subseteq \bZ^2$, we define the \textbf{internal dynamics} on $S$ as follows. 
We assume that the initial configuration on $S$ is inherited from the initial configuration on $\bZ^2$, and restrict the neighborhood of every $x\in S$ to the nearest neighbors of $x$ in $S$. 
Then we run the (modified or standard) dynamics from this initial configuration and with the restricted neighborhoods.

For either dynamics (which will be clear from the context), we say that a set $S\subseteq \bZ^2$ is \df{internally spanned by $1$s} if in the internal dynamics on $S$, the final configuration results in $S$ being fully occupied by $1$s. 
Note that a set $S$ may be internally spanned by $1$s, but in the dynamics on $\bZ^2$, $S$ may contain $0$s and $2$s in the final configuration. Also note that if $S$ is internally spanned by $1$s for the modified variant, then it is also internally spanned by $1$s for the standard variant.

The following lemma, which will be used in later sections, is a variant of a result due to Aizenman and Lebowitz \cite{AizenmanLebowitz}.
For a rectangle $R \subset \bZ^2$, denote by $\operatorname{long}(R)$ the length of the longest side of $R$.

\begin{lemma}[Aizenman-Lebowitz]
\label{lem:Aizenman-Lebowitz}
    Suppose that the initial configuration on a finite square box $B \subset \bZ^2$ contains only $1$s and $2$s, and run the bootstrap percolation dynamics of $2$s on $1$s internal to $B$.
    Consider an arbitrary rectangle $S \subset B$ that is completely filled by $2$s in the final configuration of the internal dynamics on $B$.
    Then for every $j \in \N$ with $j \leq \operatorname{long}(S)$, 
    there is a rectangle $R \subset B$ such that the internal dynamics on $R$ completely fills $R$ with $2$s in its final configuration, and $\operatorname{long}(R) \in [j/2, j]$.
\end{lemma}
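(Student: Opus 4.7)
The plan is to apply the classical Aizenman--Lebowitz machinery to the auxiliary two-neighbor bootstrap process inside $B$ in which $2$s play the role of occupied sites and $1$s the role of vacant sites. Since this auxiliary dynamics on $B$ is monotone, its final configuration decomposes as a disjoint union of maximal internally spanned rectangles (the standard rectangles-process description). The rectangle $S$, being connected and filled by $2$s, must lie inside one such maximal rectangle $T \subseteq B$, with $\operatorname{long}(T) \geq \operatorname{long}(S) \geq j$. It therefore suffices to extract the desired $R$ as an internally spanned sub-rectangle of $T$.

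For this, I would establish and iterate the following splitting lemma: whenever a rectangle $T' \subseteq B$ is internally spanned and $\operatorname{long}(T') \geq 2$, there exist two disjoint internally spanned rectangles $T_1, T_2 \subsetneq T'$ whose bounding rectangle equals $T'$, and
\[
\operatorname{long}(T') \;\leq\; \operatorname{long}(T_1) + \operatorname{long}(T_2) + 1.
\]
To see this, I would replay the internal dynamics on $T'$ as a rectangles coalescence process: each initial $2$ inside $T'$ is regarded as a $1 \times 1$ rectangle, and at every step two current rectangles whose bounding rectangle has just become completely $2$-filled coalesce into that bounding rectangle. The last coalescence producing $T'$ determines $T_1, T_2$, which are by construction disjoint, strictly smaller, and internally spanned. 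The long-side inequality comes from the familiar fact that two-neighbor bootstrap percolation can bridge a gap of at most one column or row in the coalescing direction.

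Starting from $T$, I would then descend greedily. If the current internally spanned rectangle has long side greater than $j$, apply the splitting lemma and pass to the child with the larger long side, yielding a new internally spanned rectangle $T'$ satisfying $\operatorname{long}(T') \geq (\operatorname{long}(\text{parent}) - 1)/2$. Stop as soon as $\operatorname{long}(T') \leq j$; since the parent had long side at least $j + 1$, the stopping rectangle satisfies $\operatorname{long}(T') \geq j/2$, so $R = T'$ has $\operatorname{long}(R) \in [j/2, j]$. The descent terminates in finitely many steps because long sides strictly decrease and are bounded below by $1$. The main technical obstacle is establishing the splitting lemma with the sharp constant $+1$: it is precisely this bookkeeping that delivers the lower bound $j/2$ rather than $(j-c)/2$ for some larger $c$, and it is standard in the two-neighbor bootstrap literature \cite{AizenmanLebowitz, Holroyd2003}. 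Internalizing the dynamics to $B$ does not loosen this constant, since restricting the neighborhoods to $B$ only prohibits additional activations without altering the rectangles-process description inside $B$.
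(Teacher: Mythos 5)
Your proposal is correct in substance, but note that the paper does not prove this lemma at all: it is stated as a known variant of the Aizenman--Lebowitz result and simply cited, so there is nothing internal to compare against. What you have written is the standard reconstruction from the literature: the final configuration of the internal dynamics on $B$ decomposes into maximal internally spanned rectangles that are pairwise non-adjacent (so the connected set $S$ sits inside one of them, $T$, with $\operatorname{long}(T)\ge j$), and then the classical splitting lemma --- any internally spanned rectangle $T'$ with $\operatorname{long}(T')\ge 2$ arises as the span of two disjoint internally spanned rectangles $T_1,T_2\subsetneq T'$ with $\operatorname{long}(T')\le \operatorname{long}(T_1)+\operatorname{long}(T_2)+1$, since a spanned rectangle cannot contain two adjacent empty lines perpendicular to its long side --- is iterated greedily to land in the window $[j/2,j]$. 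Two small points to tighten. First, your termination argument is slightly off: the long side of the larger child need not strictly decrease (e.g.\ a full-length row inside $T'$), so you should instead appeal to strict containment $T_1\subsetneq T'$ (equivalently, strictly decreasing area) to see that the descent ends. Second, the restriction of the neighborhoods to $B$ is harmless for a cleaner reason than you give: since $B$ is a rectangle and all $2$s lie in $B$, the closure under the full $\bZ^2$ rule never leaves $B$, so the internal and unrestricted dynamics coincide and the rectangles-process description applies verbatim; the same remark applies to the sub-rectangles $R$ produced by the descent. With those touch-ups, your argument is exactly the classical one the paper is invoking.
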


Note that Lemma \ref{lem:Aizenman-Lebowitz} works for both the standard and modified percolation rule when $1$s are flipped to $2$s.

We will need a version of the second moment method given in the following lemma.

\begin{lemma}\label{lem:2nd-moment} Fix an $\epsilon>0$.
Assume that $A_i$, $i=1,\ldots, I$, are events such that $\P(A_i\cap A_j)\le \P(A_i)\P(A_j)(1+\epsilon)$ for all $i\ne j$, and $\sum_i\P(A_i)\ge 1/\epsilon$. Then 
$\P(\cup_iA_i)\ge 1-2\epsilon$. 
\end{lemma}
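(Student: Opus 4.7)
The plan is to apply the Paley--Zygmund (second moment) inequality to the counting random variable $X:=\sum_{i=1}^I \indicator_{A_i}$, whose event of positivity is exactly $\cup_i A_i$. Writing $\mu:=\E[X]=\sum_i \P(A_i)$, the hypothesis becomes $\mu\ge 1/\epsilon$.

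First, I would use the pairwise hypothesis $\P(A_i\cap A_j)\le (1+\epsilon)\P(A_i)\P(A_j)$ to control the second moment. Expanding $X^2$ into its diagonal and off-diagonal contributions yields
\[
\E[X^2]\;=\;\sum_i \P(A_i)+\sum_{i\ne j}\P(A_i\cap A_j)\;\le\;\mu+(1+\epsilon)\mu^2,
\]
where the off-diagonal bound uses $\sum_{i\ne j}\P(A_i)\P(A_j)\le \mu^2$. Next, Paley--Zygmund gives
\[
\P(\cup_i A_i)\;=\;\P(X>0)\;\ge\;\frac{(\E X)^2}{\E[X^2]}\;\ge\;\frac{\mu^2}{\mu+(1+\epsilon)\mu^2}\;=\;\frac{1}{\mu^{-1}+1+\epsilon}.
\]
Finally, I would substitute $\mu^{-1}\le \epsilon$ and use the elementary inequality $1/(1+2\epsilon)\ge 1-2\epsilon$ (valid for all $\epsilon\ge 0$, since $(1-2\epsilon)(1+2\epsilon)=1-4\epsilon^2\le 1$) to conclude $\P(\cup_i A_i)\ge 1-2\epsilon$.

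There is no genuine obstacle here: the statement is a quantitative form of the standard second moment method, and the only point requiring minor care is seeing that the hypothesis $\mu\ge 1/\epsilon$ is precisely what is needed to absorb the diagonal contribution $\mu$ in $\E[X^2]$ into the $(1+\epsilon)\mu^2$ term so that the final denominator is $1+2\epsilon$ rather than something worse.
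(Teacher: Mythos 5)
Your proof is correct and is essentially identical to the paper's argument: both apply the second moment (Paley--Zygmund) bound to $X=\sum_i \mathbbm 1_{A_i}$, use the pairwise hypothesis to get $\E[X^2]\le \E X+(1+\epsilon)(\E X)^2$, and then use $\E X\ge 1/\epsilon$ to conclude $\P(X>0)\ge 1/(1+2\epsilon)\ge 1-2\epsilon$. No discrepancies to note.
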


\begin{proof} Let  
$X := \sum_i \mathbbm 1_{A_i}$.  Then 
$$
\E(X^2)=\E X+\sum_{i\ne j} \P(A_i\cap A_j)\le \E X+(1+\epsilon)\sum_{i,j} \P(A_i)\P(A_j)=\E X+(1+\epsilon)(\E X)^2, 
$$
and so, by the second moment method, 
\begin{equation*}
\begin{aligned}
\P(X>0)\ge \frac{(\E X)^2}{\E(X^2)} 
\ge \frac{\E X}{1+(1+\epsilon)\E X}
\ge 
\frac{1}{1+2\epsilon}
\ge 
1-2\epsilon.
\end{aligned}
\end{equation*}
\end{proof}

Throughout the paper, we will use the following common asymptotic notations:

\begin{itemize}
    \item 
    $f = \cO(g)$, or equivalently, $f \lesssim g$, $g \gtrsim f$, if $\limsup f/g < \infty$; and
    \item 
    $f = o(g)$, or equivalently, $f \ll g$, $g \gg f$, if $\lim f/g = 0$.
\end{itemize}
All limits are taken as the variable ($p$ or $q$) in the context approaches $0$ from the positive side, unless otherwise specified.

\section{Eliminating $0$s when $q$ is small}\label{sec:eliminate-0s}

We will prove Theorem \ref{thm:0slose} in this section.
Namely, we will show that under either variant, 
\[
\P(\text{the origin is eventually not in state 0}) \to 1
\]
as $p \to 0$ and $q \ll p^2/(\log(1/p))^2$. 
For this, it suffices to show that if the origin starts in state $0$, then it will flip to state $1$ at some time.

A \textbf{$p$-box} is a square with side length $2\floor{\frac{1}{p}\log(1/p)}$. We tile the plane with $p$-boxes in checkerboard fashion, and we call two $p$-boxes adjacent if they contain vertices that are nearest neighbors.
We say that a $p$-box is \textbf{$1$-crossable} if initially every row and every column of this $p$-box contains a $1$, and there is no $2$ not only in this $p$-box itself, but also in the eight $p$-boxes surrounding this $p$-box.
A $p$-box is called a \textbf{$1$-nucleus} if it is internally spanned by $1$s in the modified dynamics.

\begin{lemma}\label{lem:p-boxcrossablewhp}
    A $p$-box is $1$-crossable with high probability provided that $q \ll \frac{p^2}{(\log(1/p))^2}$.
\end{lemma}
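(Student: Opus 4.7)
The plan is to decompose the $1$-crossability event into the intersection of two simpler events and bound the probability of each failing by a direct union bound. Set $L = 2\floor{(1/p)\log(1/p)}$ for the side length of a $p$-box. Let $A$ be the event that no vertex in the $3\times 3$ block of $p$-boxes (the central one together with its eight neighbors) is initially in state $2$, and let $B$ be the event that every row and every column of the central $p$-box contains at least one initial $1$. By definition, $1$-crossability is exactly $A\cap B$, so it suffices to show that $\P(A^c)\to 0$ and $\P(B^c)\to 0$ separately.

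First I would bound $\P(A^c)$. The $3\times 3$ block contains $9L^2$ vertices, each independently in state $2$ with probability $q$, so by the union bound $\P(A^c)\le 9L^2 q$. Since $L^2 = O((\log(1/p))^2/p^2)$, the hypothesis $q\ll p^2/(\log(1/p))^2$ forces $L^2 q \to 0$, and hence $\P(A^c)\to 0$.

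Next I would bound $\P(B^c)$. A given row of the central $p$-box has $L$ vertices, each independently in state $1$ with probability $p$, so it contains no $1$ with probability $(1-p)^L \le e^{-pL}$. By the choice of $L$ we have $pL \ge 2\log(1/p) - 2p$, hence $e^{-pL}\le (1+o(1))p^2$. The same bound holds for each of the $L$ columns, and a union bound over the $2L$ rows and columns yields $\P(B^c)\le 2L\,(1+o(1))p^2 = O(p\log(1/p))\to 0$.

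The conclusion follows from $\P((A\cap B)^c)\le \P(A^c)+\P(B^c)\to 0$. No substantive obstacle arises: the real content of the lemma is in the coincidence that the constant $2$ in the definition of $L$ is chosen precisely so that the per-line failure probability $(1-p)^L$ is comparable to $p^2$---just small enough to survive the factor $2L = O(p^{-1}\log(1/p))$ from the union bound over all rows and columns, while $L^2$ stays below the reciprocal of the threshold on $q$.
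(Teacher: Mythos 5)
Your proof is correct and follows essentially the same route as the paper: a union bound over the $O((\log(1/p))^2/p^2)$ vertices of the $3\times 3$ block of $p$-boxes to exclude $2$s, plus a union bound over the $2L$ rows and columns with per-line failure probability $(1-p)^{L}\approx p^2$. The only difference is cosmetic bookkeeping (your explicit handling of the floor via $pL\ge 2\log(1/p)-2p$), so nothing further is needed.
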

\begin{proof}
    By the union bound, the probability that a $p$-box is not $1$-crossable is at most
    \[
    \left(3\cdot \frac{2}{p}\log\left(\frac{1}{p}\right)\right)^2 q + 2\left(\frac{2}{p}\log\left(\frac{1}{p}\right)\right)(1 - p)^{\frac{2}{p}\log(1/p) - 2} \to 0
    \]
    as $p \to 0$ with $q \ll \frac{p^2}{(\log(1/p))^2}$.
\end{proof}
Let $B_0 := [-M, M]^2$ be the square box centered at the origin, where $M := \floor{e^{C/p}}$ with the constant $C$ to be specified later. 

\begin{lemma}\label{lem:highway}
    As $p \to 0$ with $q \ll \frac{p^2}{(\log(1/p))^2}$, the following holds with high probability. 
    There is a path of adjacent $1$-crossable $p$-boxes that travels from the $p$-box containing the origin to a $p$-box that intersects the boundary of $B_0$.
\end{lemma}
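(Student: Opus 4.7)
The plan is to carry out a standard Peierls-type contour argument on the $p$-box lattice. From Lemma~\ref{lem:p-boxcrossablewhp} (and by inspecting the bound in its proof), each $p$-box is $1$-crossable with probability at least $1 - \epsilon(p)$, where $\epsilon(p) = O(p\log(1/p))$ under the assumption $q \ll p^2/(\log(1/p))^2$. The key observation on dependence is that whether a given $p$-box $B$ is $1$-crossable is determined solely by the initial configuration on the $3 \times 3$ block of $p$-boxes centered at $B$. Consequently, the $1$-crossability events for any two $p$-boxes separated by $\ell^\infty$-distance strictly greater than $2$ in the $p$-box lattice are independent, and a greedy packing (the $\ell^\infty$-ball of radius $2$ contains $25$ points) shows that every self-avoiding $*$-path (i.e., $8$-connected path) of length $\ell$ in the $p$-box lattice contains at least $\ell/K$ pairwise-independent $p$-boxes, with $K = 25$ sufficing.

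Next I would invoke planar duality for site percolation on $\bZ^2$. Let $\cC$ denote the connected component (under $4$-adjacency) of the origin's $p$-box inside the set of $1$-crossable $p$-boxes. If $\cC$ contains no $p$-box intersecting $\partial B_0$, then $\cC$ lies strictly in the interior of the $p$-box grid inside $B_0$, so its outer $4$-boundary is contained in the non-$1$-crossable $p$-boxes and is $*$-connected; from it one extracts a self-avoiding $*$-circuit of non-$1$-crossable $p$-boxes surrounding the origin. A classical enumeration (each such circuit of length $\ell$ passes through some point $(k,0)$ with $0 \le k \le \ell$, and from each such point there are at most $8^{\ell}$ self-avoiding $*$-walks of length $\ell$) bounds the number of $*$-circuits of length $\ell$ around the origin by $\ell \cdot 8^\ell$. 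Combined with the independence extraction, the probability that a fixed such $*$-circuit consists entirely of non-$1$-crossable $p$-boxes is at most $\epsilon(p)^{\ell/K}$.

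A union bound therefore yields
\[
\Prob{\text{no such path exists}} \;\le\; \sum_{\ell \ge 4} \ell \cdot 8^\ell \cdot \epsilon(p)^{\ell/K}.
\]
Since $\epsilon(p) \to 0$ as $p \to 0$, for all sufficiently small $p$ we have $8\,\epsilon(p)^{1/K} < 1/2$, so the series is dominated geometrically by its $\ell = 4$ term and tends to $0$.

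The main obstacle is really just the bookkeeping: verifying the exact dependence range (to conclude independence at $\ell^\infty$-distance strictly greater than $2$) and extracting the correct linear fraction of mutually independent $p$-boxes from a $*$-circuit. Both steps are routine; after that, the Peierls sum is a standard template. It is worth emphasizing that the exponential size $M = \floor{e^{C/p}}$ of $B_0$ plays no role whatsoever in the contour bound, since the sum converges independently of the enclosing box. This is essential for later sections, where the conclusion of this lemma must be applied inside a box whose radius is exponentially large in $1/p$.
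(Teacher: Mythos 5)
Your proposal is correct and follows essentially the same route as the paper: a Peierls/contour argument that bounds the expected number of dual $*$-circuits of non-$1$-crossable $p$-boxes around the origin, handling the local dependence by extracting a linear fraction of mutually independent boxes (the paper counts circuits by $8\cdot 7^{j-1}$ with exponent $j/10$; your $8^{\ell}$ and $\ell/25$ serve the same purpose, and only $\epsilon(p)\to 0$ is needed, not your stated $O(p\log(1/p))$ rate). The one small point to make explicit, as the paper does, is the degenerate case where the origin's own $p$-box fails to be $1$-crossable (so $\cC=\emptyset$ and no circuit is extracted); this event has vanishing probability by Lemma~\ref{lem:p-boxcrossablewhp}.
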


\begin{proof}
    First, by Lemma \ref{lem:p-boxcrossablewhp}, the $p$-box containing the origin is itself $1$-crossable with high probability.
    If such a path of $1$-crossable $p$-boxes does not exist, then within distance $1/p^2$ of $B_0$ there must exist a circuit of non-$1$-crossable $p$-boxes surrounding the origin.
    Such a circuit may take diagonal steps.
    Note that starting from any location, the number of paths (allowing diagonal steps) of length $j$ is at most $8 \cdot 7^{j-1}$.
    Let $\rho = \rho(p)$ be the probability that a $p$-box is not $1$-crossable, and by Lemma \ref{lem:p-boxcrossablewhp}, $\rho \to 0$ as $p \to 0$.
    It follows that the expected number of such circuits surrounding the origin is at most
    \[
    \sum_{j \geq 4} j \cdot 8 \cdot 7^{j-1} \cdot \rho^{j/10},
    \]
    which converges and becomes arbitrarily small as $p \to 0$. In the expression above, the exponent $j/10$ comes from the fact that $1$-crossability of a $p$-box is dependent on the configuration in itself and in the eight surrounding boxes.
\end{proof}



We call such a path of $1$-crossable $p$-boxes described in Lemma \ref{lem:highway} a \textbf{highway}.
Given a highway, we will find a $1$-nucleus on it so that the $1$-nucleus will transmit $1$s to the origin through the $1$-crossable $p$-boxes on the highway.
By the definition of $1$-crossable $p$-boxes, the transmission of $1$s proceeds for both variants.

\begin{lemma}\label{lem:1-nucleus}
    The probability that a $p$-box is a $1$-nucleus is at least $e^{-\pi^2/(3p)}$ for all $p \in (0,1)$.
\end{lemma}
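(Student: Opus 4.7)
The plan is to construct an explicit configuration of initial $1$s in the $p$-box that deterministically forces it to be internally spanned by $1$s under the modified dynamics, and to lower-bound its probability via a classical Holroyd-style dilogarithm estimate. Anchor a seed at a corner of the $p$-box and consider the event that (i) the seed is initially a $1$, and (ii) at each step of the prescribed growth sequence
\[
1\times 1 \to 2\times 1 \to 2\times 2 \to 3\times 2 \to 3\times 3 \to \cdots \to L \times L,
\]
the newly appended row or column (of length $\ell$) contains at least one initial $1$ and no initial $2$. Under the modified update rule, once the current rectangle is filled with $1$s, a single initial $1$ in the newly appended row (or column) suffices to fill it: the initial $1$ cascades along the row (or column), and each newly flipped cell has one neighbor in state $1$ from the already-filled rectangle and another from the cascade. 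Since the cells involved in distinct steps are disjoint and partition the $p$-box, the events are independent, and the probability factorizes as
\[
\prod_{k=1}^{L-1}\bigl[(1-q)^k-(1-p-q)^k\bigr]^2 \cdot \bigl[(1-q)^L-(1-p-q)^L\bigr],
\]
with each intermediate length $k$ appearing twice (as both column and row) and length $L$ appearing once. For $q \ll p^2/(\log(1/p))^2$ and $L=2\floor{(1/p)\log(1/p)}$, this equals $(1-o(1))\prod_{k=1}^{L-1}\bigl[1-(1-p)^k\bigr]^2$.

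The main step is then to show $\prod_{k=1}^{\infty}\bigl[1-(1-p)^k\bigr]^2 \ge e^{-\pi^2/(3p)}$, since truncating to a finite product only increases it. Set $g(x) = \log[1-(1-p)^x]$, observe that $g$ is increasing on $(0,\infty)$, and conclude $\sum_{k\ge 1} g(k) \ge \int_{0}^{\infty} g(x)\,dx$. The substitution $u = (1-p)^x$ together with the dilogarithm identity $\int_{0}^{1} \log(1-u)/u\,du = -\pi^2/6$ yields
\[
\int_{0}^{\infty} g(x)\,dx = -\frac{\pi^2/6}{\log(1/(1-p))}.
\]
The elementary inequality $\log(1/(1-p)) \ge p$ then gives $\sum_{k\ge 1} g(k) \ge -\pi^2/(6p)$, and doubling completes the estimate on the product.

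The main technical obstacle is the boundary bookkeeping: the trailing factor $[1-(1-p)^L]<1$ together with the $o(1)$ corrections could in principle eat into the claimed bound. However, the inequality $\log(1/(1-p)) \ge p$ is strict with gap $p^2/2+O(p^3)$, exponentiating to an extra multiplicative slack of roughly $e^{\pi^2/6} \approx 5$ in the infinite product. This easily absorbs all the boundary losses for small $p$. For $p$ bounded away from $0$, where the $p$-box contains very few cells and the nested-rectangle construction above is essentially vacuous, the bound $e^{-\pi^2/(3p)} \le e^{-\pi^2/3} < 1/25$ is a weak statement that can be verified by direct inspection of the relevant small cases.
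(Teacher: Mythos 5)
Your construction is essentially the paper's: a Holroyd-style nested-rectangle growth from a corner combined with the dilogarithm identity $\int_0^1 \log(1-u)/u\,du=-\pi^2/6$, and the core of your argument (the cascade under the modified rule, the factorization over disjoint appended segments, the comparison of $\sum_{k\ge 1}\log[1-(1-p)^k]$ with the integral, and the slack coming from $\log(1/(1-p))\ge p$) is correct. The one real divergence is that you additionally require the appended segments to contain no initial $2$s, so your per-segment probabilities are $(1-q)^k-(1-p-q)^k$ rather than $1-(1-p)^k$; the paper's sufficient condition is only that, for each $j$, the top and right external boundaries of the $j\times j$ corner square contain a $1$, so its bound is exactly $\prod_{j\ge 1}\bigl(1-(1-p)^j\bigr)^2\ge e^{-\pi^2/(3p)}$, with no $q$-dependence and no correction terms, valid verbatim for every $p\in(0,1)$. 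Because of your extra requirement, you recover the stated constant only after absorbing $(1-o(1))$ losses into the $e^{\pi^2/6}$ slack, i.e.\ only for $p$ small and $q\ll p^2/(\log(1/p))^2$ --- which is all the paper ever uses, but is not the literal ``for all $p\in(0,1)$'' claim; and your closing assertion that the case of $p$ bounded away from $0$ ``can be verified by direct inspection'' is not an argument (indeed, with the no-$2$s requirement built into your event, no bound uniform in $q$ can hold there). To be fair, your bookkeeping of $2$s is more faithful to the definition of ``internally spanned by $1$s'' than the paper's own proof, which silently ignores initial $2$s; the clean all-$p$ bound is obtained precisely by phrasing the sufficient condition purely in terms of the $1$s.
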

\begin{proof}
    For convenience, we provide the standard proof of this lower bound.
    Consider the $j$ by $j$ sub-square located at the lower-left corner of the $p$-box (the sub-square shares the left and bottom boundaries with the $p$-box).
    A sufficient condition for a $p$-box to be a $1$-nucleus is that for each $j = 1,2,\dots, 2\floor{\frac{1}{p}\log(1/p)}$, there is a 1 in its top external boundary and a 1 in its right external boundary in the initial configuration.
    The probability for this condition is at least 
    \[
    \prod_{j = 1}^\infty (1 - (1 - p)^j)^2 
    \geq 
    \exp\left(\frac{2}{p}\sum_{j = 1}^\infty p \log(1 - e^{-pj})\right)
    \geq \exp\left(\frac{2}{p} \int_0^\infty \log(1 - e^{-x})dx\right).
    \]
    By the Taylor expansion $\log(1 - e^{-x}) = \sum_{j = 1}^\infty e^{-jx}/j$, the improper integral evaluates to $-\pi^2/6$.
    Plugging this into the expression above gives the desired bound.
\end{proof}

\begin{lemma}\label{lem:find1-nucleus}
    Suppose $C > \pi^2/3$. With high probability, there exists a path of length at most $p^4 M$ of adjacent $1$-crossable $p$-box from the $p$-box containing the origin to a $1$-nucleus.
\end{lemma}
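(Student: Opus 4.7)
The plan is to combine the highway from Lemma~\ref{lem:highway} with the density bound on $1$-nuclei from Lemma~\ref{lem:1-nucleus}. Let $L^* := \lfloor p^4 M\rfloor$ and $\sigma := e^{-\pi^2/(3p)}$. I would first apply Lemma~\ref{lem:highway} to produce, with high probability, a highway $H$ of adjacent $1$-crossable $p$-boxes from the origin's $p$-box to a $p$-box meeting $\partial B_0$. Any such $H$ must traverse at least of order $Mp/\log(1/p)$ $p$-boxes, which for small $p$ is far greater than $L^*$, so I can truncate $H$ to its first $L^*$ boxes $B_1,\ldots,B_{L^*}$ after fixing a canonical choice of $H$ (for example, the lex-smallest shortest path in the $1$-crossable cluster of the origin's $p$-box).

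I would then argue that at least one $B_i$ is a $1$-nucleus with high probability. Two observations drive the argument. First, the $1$-nucleus event for a $p$-box is measurable with respect to the initial configuration in that single $p$-box, so across pairwise disjoint $p$-boxes these events are mutually independent. Second, being a $1$-nucleus forces $B_i$ to contain no $2$ and to have a $1$ in every row and every column of $B_i$: the former because state $2$ is terminal, the latter because in the modified dynamics a $0$ flips to $1$ only in the presence of both a horizontal and a vertical $1$-neighbor, so a row or column devoid of initial $1$s can never be filled. These are precisely the internal conditions for $1$-crossability of $B_i$, while the external condition (no $2$ in the eight surrounding $p$-boxes) is independent of the configuration inside $B_i$. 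Together with the fact that the event $\{B_i\text{ is a }1\text{-nucleus}\}$ is contained in $\{B_i\text{ satisfies the internal part of }1\text{-crossability}\}$, this yields
\[
\P(B_i\text{ is a } 1\text{-nucleus}\mid B_i\text{ is } 1\text{-crossable}) = \frac{\P(B_i\text{ is a } 1\text{-nucleus})}{\P(B_i\text{ satisfies the internal part of } 1\text{-crossability})} \geq \sigma.
\]
Conditional on all the $B_i$ being $1$-crossable, the events that the $B_i$ are $1$-nuclei are independent with probability at least $\sigma$ each, so the probability that none is a $1$-nucleus is at most $(1-\sigma)^{L^*}\leq \exp(-p^4 e^{(C-\pi^2/3)/p})$, which tends to $0$ whenever $C > \pi^2/3$.

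The main technical obstacle is making the conditional-independence argument rigorous given that the identity of the boxes $B_1,\ldots,B_{L^*}$ depends on the random configuration. One clean way to handle this is by a staged revealing of the initial configuration. First reveal all the $2$s in $B_0$, which determines for every $p$-box whether the external part of $1$-crossability holds. Then reveal the $1$-patterns in the individual $p$-boxes one at a time, following a breadth-first exploration from the origin's $p$-box that adds a box to the queue only after its $1$-crossability status has been determined. At each stage the newly revealed $1$-pattern in a box is independent of the previously revealed patterns, and for each of the first $L^*$ boxes that end up in the $1$-crossable cluster of the origin the $1$-nucleus event has conditional probability at least $\sigma$ by the displayed bound above, yielding the desired $(1-\sigma)^{L^*}$ tail estimate on the complement of the lemma's event.
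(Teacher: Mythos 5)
Your proposal is correct and follows essentially the same route as the paper: truncate the highway of Lemma~\ref{lem:highway} to its first $\lfloor p^4 M\rfloor$ boxes and use Lemma~\ref{lem:1-nucleus} together with the positive correlation between the $1$-nucleus and $1$-crossability events to obtain a failure bound of the form $(1-e^{-\pi^2/(3p)})^{\Theta(p^4 M)}$, which vanishes for $C>\pi^2/3$. Your staged-revealing argument simply makes explicit the conditional-independence step that the paper compresses into the remark that the two events are ``positively correlated'' and the exponent $p^4M/10$.
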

\begin{proof}
    By Lemma~\ref{lem:highway} there exists a highway with high probability. Conditional on this highway, consider its first $p^4 M$ $p$-boxes starting with the box containing the origin.
    The events that a $p$-box is a $1$-nucleus and that a $p$-box is $1$-crossable are positively correlated.
    Thus, by Lemma \ref{lem:1-nucleus}, the conditional (on the highway) probability that each of these $p^4M$ $p$-boxes fails to be a $1$-nucleus is at most
    \[
    (1 - e^{-\pi^2/(3p)})^{p^4 M/10},
    \]
    which vanishes as $p \to 0$ if $C > \pi^2/3$ (recall that $M = \floor{e^{C/p}}$).
\end{proof}

Meanwhile, we need to ensure that the $2$s from outside do not interfere with the transmission of $1$s to the origin.

\begin{lemma}\label{lem:nointerventionof2s}
    When $q \leq p^2$, the bootstrap percolation of $2$s internal to the region within  $\ell^\infty$-distance $M$ of $B_0$ produces no connected component of $2$s with diameter larger than $\frac{1}{p}\log(1/p)$ with high probability.
\end{lemma}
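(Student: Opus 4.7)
The plan is to apply Lemma~\ref{lem:Aizenman-Lebowitz} together with the classical perimeter bound and a union bound. Write $L := \lfloor (1/p)\log(1/p)\rfloor$ and let $\cR$ denote the region within $\ell^\infty$-distance $M$ of $B_0$.

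First, I would use the following structural observation: in threshold-$2$ bootstrap percolation on $\bZ^2$, each connected component of the final (stable) configuration is a filled rectangle. This is a consequence of the stability condition that every non-$2$ cell has at most one $2$-neighbor; any concavity, hole, or L-shaped indentation would force some non-$2$ cell to have two $2$-neighbors. Consequently, a connected cluster of $2$s with diameter exceeding $L$ in the final internal dynamics on $\cR$ is itself a filled rectangle of long side at least $L$, and Lemma~\ref{lem:Aizenman-Lebowitz} applied with $j = L$ yields a rectangle $T \subseteq \cR$ with $\operatorname{long}(T) \in [L/2, L]$ that is internally filled by $2$s.

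Next, the classical perimeter argument (the perimeter of the $2$-set is non-increasing under threshold-$2$ updates, since adding a cell with $n \ge 2$ occupied neighbors changes the perimeter by $4 - 2n \le 0$) implies that if an $a \times b$ rectangle is internally filled, it must contain at least $(a+b)/2$ initial $2$s; in particular such a $T$ contains at least $L/4$ initial $2$s, so
\[
\P(T \text{ is internally filled}) \le \binom{|T|}{L/4} q^{L/4} \le (4eLq)^{L/4}.
\]
A union bound over the $O(M^2 L^2)$ choices of $T$ gives
\[
\P(\text{some cluster has diameter} > L) \le O(M^2 L^2) \cdot (4eLq)^{L/4}.
\]

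Finally, plugging in $q \le p^2$ gives $Lq \le p \log(1/p) = o(1)$, so
\[
(4eLq)^{L/4} = \exp\bigl(-\tfrac{L}{4}(1-o(1))\log(1/p)\bigr) = \exp\bigl(-\Theta(p^{-1}(\log(1/p))^2)\bigr),
\]
which dominates $M^2 L^2 = \exp(O(1/p))$ and sends the bound to $0$ as $p \to 0$.

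The main delicacy is the structural observation in the first step; if one prefers to avoid it, the alternative is to track the merging history of the bootstrap dynamics directly, extracting at every intermediate scale $j \le L$ an internally filled rectangle of long side in $[j/2, j]$ produced during the growth of the offending cluster. The subsequent probabilistic estimates are routine.
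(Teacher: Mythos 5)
Your proposal is correct and follows essentially the same route as the paper: the rectangular structure of stable $2$-clusters, the Aizenman--Lebowitz extraction of an internally spanned rectangle at scale $\approx (1/p)\log(1/p)$, and a union bound over the $O(M^2L^2)$ candidate rectangles. The only difference is the estimate for a fixed rectangle being internally spanned --- you use the perimeter-monotonicity count (at least $L/4$ initial $2$s, giving $(4eLq)^{L/4}$) where the paper uses disjoint pairs of adjacent perpendicular lines each containing an initial $2$ (giving $(1-(1-q)^{2j})^{j/4-1}$); with $q\le p^2$ both bounds are of the same strength and the numerics go through identically.
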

\begin{proof}
    Consider the larger box with dimensions $5M$ by $5M$ containing $B_0$ at its center.
    In the initial configuration, flip all the state 0 vertices in the large box to state 1, and consider the bootstrap percolation dynamics of $2$s on $1$s internal to this larger box.
    Then, all the maximal connected components of state-$2$ vertices in the final configuration of this internal dynamics are rectangles. 
    For $j \in \N$, let $E_j$ be the event that the final configuration contains a rectangle of state-$2$ vertices with the longest side of length at least $j$.
    When $E_j$ occurs, by Lemma \ref{lem:Aizenman-Lebowitz}, the $5M$ by $5M$ box contains a rectangle $R$ with $\operatorname{long}(R) \in [j/2, j]$ and whose internal dynamics fills it entirely with $2$s.
    Then, every pair of neighboring lines that intersect $R$ and are perpendicular to the longest side of $R$ must contain a state-$2$ vertex within $R$ in the initial configuration.
    Meanwhile, two pairs of neighboring parallel lines satisfy this condition independently if they do not overlap.
    The number of such rectangles $R$ within the $5M$ by $5M$ box is at most $(5M)^2 j^2$.
    Using the fact that $1 - q \geq e^{-2q}$ for all small enough $q$, we have 
    \[
    \P(E_j) \leq 25 M^2 j^2 (1 - (1 - q)^{2j})^{j/4 - 1} 
    \leq
    25 M^2 j^2(1 - e^{-4jq})^{j/4 - 1}
    \leq
    25 M^2 j^2\exp(-(j/4 - 1)e^{-4jq})
    .
    \]
    Take $j = \floor{(1/p)\log(1/p)}$ and note that $M \leq e^{C/p}$.
    The expression above vanishes as $p \to 0$ if $q \leq p^2$.
\end{proof}

\begin{proof}[Proof of Theorem \ref{thm:0slose}]
By Lemma \ref{lem:nointerventionof2s}, the growth of $2$s will not enter a $1$-crossable $p$-box in $B_0$ before time $M$. In particular, the guaranteed to exist by Lemma~\ref{lem:highway} is uninhibited by $2$s through time $M$.
Then, by Lemma \ref{lem:find1-nucleus}, with high probability the $1$s transmit from a $1$-nucleus along a highway to the origin by time 
\[
2p^4 M \cdot 4 ((1/p)\log(1/p))^2 \ll pM,
\]
which is before time $M$. Therefore, if the origin is initially in state $0$, then it will flip to state $1$ by time $M$ with high probability, and thus will not be in state $0$ in the final configuration.
\end{proof}

\section{Occupation by 2s in the standard variant with $q$ small}
\label{sec:occupation-2-standard}

In this section, we show that the origin will likely eventually be occupied by a $2$ when $1$s and $2$s both grow according to the standard threshold-$2$ dynamics (Theorem \ref{thm:2swininstandard}).

We make use of boxes at three scales. The $p$-boxes were already defined in
Section~\ref{sec:eliminate-0s} but we repeat the definition here; the other two 
types of boxes are introduced now:
\begin{itemize}
    \item \textbf{$p$-boxes} have dimensions $2\floor{\frac{1}{p}\log(1/p)}$ by $2\floor{\frac{1}{p}\log(1/p)}$;
    \item \textbf{$q$-boxes} have dimensions $4\floor{\frac{1}{q}\log(1/q)}$ by $4\floor{\frac{1}{q}\log(1/q)}$ and the \textbf{center box} of a $q$-box is the $2\floor{\frac{1}{q}\log(1/q)}$ by $2\floor{\frac{1}{q}\log(1/q)}$ box with the same center as the $q$-box; and
    \item \textbf{big boxes} have dimensions $2L$ by $2L$, where $L := \floor{e^{\epsilon/q}}\cdot \floor{(1/q)\log(1/q)}$, and the constant $\epsilon$ will be determined later.
\end{itemize}

\subsection{$p$-boxes and $q$-boxes}

This time, we call a $p$-box \textbf{$1$-crossable} if: it contains no $2$s inside or on the external boundary; and every row and every column contains a $1$ initially. 
Note that this definition is slightly different from the one in the previous section.
We again call a $p$-box a \textbf{$1$-nucleus} if it is internally spanned by $1$s. 
A \textbf{frame} is a rectangle such that the number of rows or the number of columns is at least $6$, and each side contains a state-$2$ vertex and there is an additional state-$2$ vertex within distance $2$ of each side also within the rectangle.
A frame is said to be contained in a region if all of these defining state-$2$ vertices are contained in this region.
Every $q$-box is tiled by $p$-boxes, and we call a $q$-box \textbf{fillable} if the following holds:
\begin{itemize}
    \item[(F1)] 
    within the $q$-box, there is a circuit of $1$-crossable $p$-boxes surrounding the center box;
    \item[(F2)] 
    the connected components of $p$-boxes within the finite region bounded by the circuit, which are not connected via $1$-crossable $p$-boxes to the circuit in (F1), each have diameter (measured in terms of $p$-boxes) at most $\log(1/q)$;
    \item[(F3)] 
    within the $q$-box, every square with side length $3\floor{\frac{1}{p}\log(1/p)\log(1/q)}$ contains no frame, and every $5 \times 5$ square contains at most two state-$2$ vertices;
    \item[(F4)] 
    every row and every column of the center box contains a $2$ initially.
\end{itemize}

\begin{figure}[h!]
\label{fig:F1circuit}
\begin{center}
\centering
        \resizebox{0.75\linewidth}{!}{
          \input{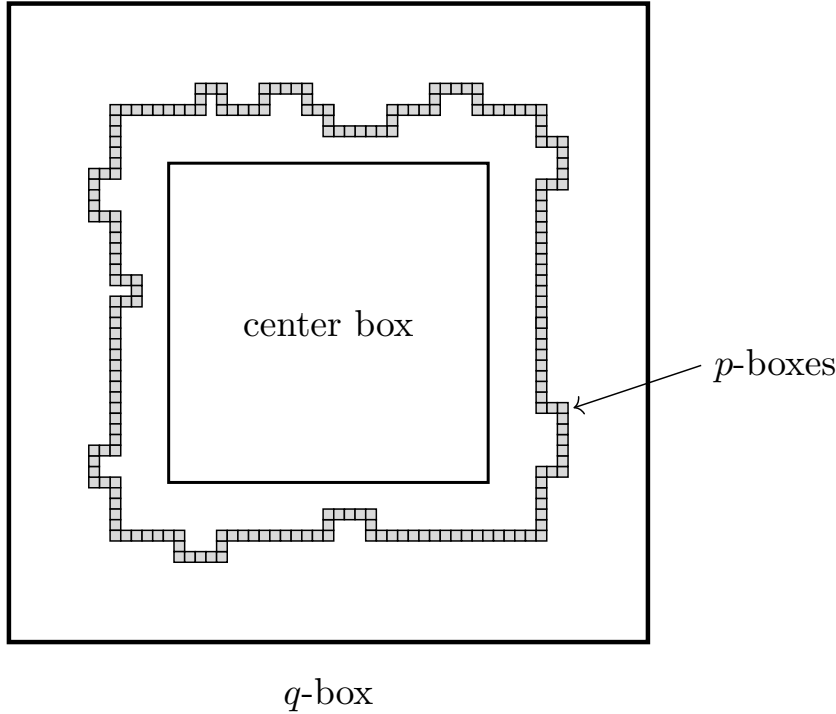}  
        }
\end{center}
\caption{A circuit of $1$-crossable $p$-boxes surrounding the center box in a $q$-box as described in (F1).}
\end{figure}

Under these conditions, if the circuit of $1$-crossable $p$-boxes in a fillable $q$-box gets filled by $1$s at some time, then every $0$ in the center box is eliminated at some time by the following lemma.

\begin{lemma}\label{lem:0elimination}
    Suppose $S\subset \bZ^2$ is a finite region such that in the initial configuration, $S$ does not contain any frame and every $5 \times 5$ square that intersects with $S$ contains at most two state-$2$ vertices. 
    Assume that the external boundary of $S$ is entirely occupied by $1$s from some time $T$ to time $T+|S|$. 
    Then every $0$ in $S$ is eliminated by time $T+|S|$.
\end{lemma}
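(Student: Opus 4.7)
The plan is to argue by contradiction: suppose the set $Z := \{z \in S : \xi_{T+|S|}(z)=0\}$ of stuck zeros is nonempty, and derive a frame in $S$.

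First, the long horizon $|S|$ ensures that no further $0\to 1$ transitions occur in $S$ at time $T+|S|$, since threshold-$2$ BP stabilizes in at most $|S|$ steps on a region of size $|S|$. Hence at time $T+|S|$, each $z\in Z$ has at most one neighbor in state~$1$, so at least three of its neighbors lie in $Z\cup\mathcal{T}$, where $\mathcal{T}$ denotes the set of sites in state~$2$ at time $T+|S|$. This gives the rigidity that $Z$ is ``surrounded'' by $Z\cup\mathcal{T}$.

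Second, I would bound $\mathcal{T}$ by applying Lemma~\ref{lem:Aizenman-Lebowitz} to the threshold-$2$ BP of $2$s on $1$s. Combined with the sparsity assumption of at most two state-$2$ vertices per $5\times 5$ square, this shows that $\mathcal{T}$ is a union of bounded-diameter clusters, each within bounded $\ell^\infty$-distance of a cluster of initial $2$s. Consequently, any macroscopic obstacle structure appearing in $\mathcal{T}$ must trace back to a comparable structure among the initial $2$s, up to a bounded enlargement.

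Third, let $R$ be the smallest rectangle containing $Z$. Each side of $R$ is met by some $z\in Z$, and I would argue, side by side, that each side must contain an initial $2$ and that there must be an initial $2$ within distance $2$ of that side inside $R$. Otherwise, the standard spreading rule of threshold-$2$ BP would push $1$s from outside $R$ along that side, using the exterior corners of $R$ as launching points, and eventually eliminate $z$. The sparsity assumption also forces $\operatorname{long}(R)\ge 6$, since an eight-vertex frame cannot fit inside any $5\times 5$ window. Thus $R$ is itself a frame in $S$, contradicting the no-frame hypothesis.

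The main obstacle is the non-monotonicity: a $1$ that would normally drive the spreading mechanism may transition to $2$ before it contributes, weakening the corner argument. I would handle this via the clustering bound on $\mathcal{T}$ from the second step: outside small neighborhoods of the initial-$2$ clusters, the $1$-dynamics in $S$ effectively coincides with pure threshold-$2$ BP having the initial $2$s as frozen obstacles, so the standard corner argument extracts the frame as before.
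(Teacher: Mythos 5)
Your plan has a genuine gap at its crux, step 3. You take $R$ to be the bounding rectangle of the \emph{stuck $0$s} and claim each side of $R$ must contain an initial $2$, plus an additional initial $2$ within distance $2$ of that side inside $R$, ``otherwise $1$s sweep along that side from outside $R$.'' Two things go wrong. First, the region just outside $R$ (but inside $S$) is not at your disposal: it can contain $2$s, both initial ones and ones grown during the dynamics, and these can block the sweep along a side of $R$ without contributing any $2$ on that side or within distance $2$ of it \emph{inside} $R$. In fact this is the typical situation: the $2$s that keep an extremal stuck $0$ stuck sit adjacent to it on the far side, i.e.\ just \emph{outside} the bounding box of the stuck $0$s, so they are neither on a side of $R$ nor inside $R$, and $R$ need not be a frame even though stuck $0$s exist (the frame forced by the hypothesis generally lives in a shifted or larger rectangle). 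So the contradiction you aim for does not follow from the one-shot claim. The paper avoids exactly this by working with the bounding rectangle of the \emph{$2$s} rather than of the stuck $0$s, and by iterating: frame-freeness guarantees at every stage that some side of the current rectangle has no additional $2$ within distance $2$, the $1$s then clear a strip of width at least $3$ next to that side, the rectangle is pushed inward, and one repeats until the rectangle fits in a $5\times 5$ square, where the ``at most two $2$s'' hypothesis supplies the base case. Your sketch has no substitute for this iteration, and the final ``handle non-monotonicity via the clustering bound'' paragraph does not repair it, since the problematic $2$s can be initial ones sitting right next to $R$.

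A secondary issue: in step 2 you credit the sparsity hypothesis with making the final $2$-clusters bounded, but sparsity alone does not do this --- roughly one initial $2$ per dozen sites (well below two per $5\times 5$) can internally span arbitrarily large squares under threshold-$2$ growth. What caps the clusters is the absence of frames: by Lemma~\ref{lem:Aizenman-Lebowitz}, a large spanned rectangle would contain an internally spanned rectangle with long side at least $6$, whose defining $2$s form a frame in $S$. That part of your argument is repairable (after also justifying the comparison in which initial $0$s are flipped to $1$s), but the extraction of a frame from the terminal configuration in step 3 is the step that fails.
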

\begin{proof}
    To bound the time by which all $0$s are eliminated, observe that if no $0$ flips to a $1$ at some step, then no $0$ will flip to a $1$ thereafter. 
    We can therefore equivalently assume that all vertices outside of $S$ are permanently $1$s after time $T$, since $2$s from the outside cannot interfere with the dynamics internal to $S$ before $0$s are eliminated. 
    Without loss of generality, suppose $S$ is connected (otherwise consider the connected components of $S$ separately). 
    Furthermore, by considering the smallest rectangle containing $S$, we may assume that $S$ is a rectangle. 
    
    If $S$ contains at most one $2$ initially, the claim clearly follows.
    Suppose there are initially exactly two $2$s in $S$.
    Then the region outside the smallest rectangle, $R$, containing these two $2$s will eventually be filled by $1$s. 
    Note that $R$ is possibly degenerate as a line segment.
    There are three cases to consider with regard to the dimensions of $R$.

    If $R$ is $1 \times k$, then each of the vertices in $R$ that are initially in state $0$ will eventually have two neighbors in state $1$ (to the north and south), and therefore flip to state $1$. 
    Note that if $k=3$, and the midpoint is in $S$, then this $1$ would subsequently flip to a $2$, but nonetheless the $0$ was eliminated.

    If $R$ is a $2 \times 2$ square, then each corner of $R$ that is initially in state $0$ will eventually have two neighbors in state $1$, and will flip to state $1$. 
    Again, if these $1$s are in $S$, they will then flip to $2$s.

    If $R$ strictly contains a $2 \times 2$ square, then again, every initial $0$ in $R$ eventually flips to a $1$ (and stays in state $1$).

    Now suppose that $S$ contains at least three $2$s initially.
    Since the external boundary of $S$ is all in state $1$ permanently after time $T$, no vertex on the external boundary of $S$ is ever in state $2$.
    Then the assumptions on $S$ implies that $S$ contains no frame at any time.

    At time $T$, we start by considering the topmost, bottommost, leftmost, and rightmost $2$s within the region $S$.
    The four lines hosting these $2$s form a rectangle, and all the $0$s in $S$ outside of this rectangle will eventually be flipped to $1$s.
    If this rectangle is contained in a $5 \times 5$ square, then by assumption there are at most two initial $2$s in this rectangle (note that some of the defining $2$s of this rectangle must coincide), and the external boundary of this rectangle will all become $1$s at some time. 
    The previous base case argument for exactly one or two initial $2$s gives the desired result. 
    
    If one side of this rectangle contains at least $6$ vertices, then since $S$ contains no frame, one of the four sides of this rectangle does not see any additional state-$2$ vertices within distance $2$.
    The state-$2$ vertex living on this particular side therefore cannot block the growth of $1$s from outside, and all the state-$0$ vertices in $S$ within distance $2$ to this side of the rectangle will eventually be flipped to state-$1$.
    Then, we push this side of the rectangle inward (by at least $3$ units) until we find another state-$2$ vertex in $S$.
    If this push makes another side of the rectangle free of $2$s (because these two sides shared a $2$ at the corner), we also push this side inward until we find another $2$ in $S$.
    
    The rectangle has shrunk, and all the $0$s in $S$ outside of this rectangle will eventually be flipped to $1$s.
    Repeat this procedure until the rectangle is contained in a $5 \times 5$ square, and the proof is complete with the base case argument.
\end{proof}

Note that Lemma \ref{lem:0elimination} only works when $1$s perform the standard bootstrap percolation on $0$s.
The argument fails under the modified rule.
For example, observe that if initially a $0$ neighbors $2$s on both its left and right, then the $0$ within this ``$202$" segment will remain in state $0$ forever under the modified rule. Therefore, even the base case (with two $2$s) when $R$ is a $1\times 3$ rectangle fails.
This observation is crucial when we prove that $2$s do not prevail in the modified dynamics.

\begin{lemma}\label{lem:p-boxcrossable}
    A $p$-box is $1$-crossable with high probability provided that $q \ll \frac{p^2}{(\log(1/p))^2}$.
\end{lemma}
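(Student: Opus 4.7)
The plan is to proceed by a union bound over the two ways a $p$-box can fail to be $1$-crossable, almost exactly as in the proof of Lemma~3.1, only with a slightly different (and indeed weaker) requirement on the region in which $2$s must be absent. So this lemma should be essentially a restatement with a cosmetic change in constants.

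More concretely, fix a $p$-box $B$ of side length $2\lfloor (1/p)\log(1/p)\rfloor$. Let $\cE_1$ be the event that some vertex of $B$ or of the external boundary of $B$ is initially in state $2$, and let $\cE_2$ be the event that some row or column of $B$ contains no $1$ in the initial configuration. Then $B$ fails to be $1$-crossable if and only if $\cE_1 \cup \cE_2$ occurs, so it suffices to show $\P(\cE_1)+\P(\cE_2)\to 0$.

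For $\cE_1$, the total number of vertices in $B$ together with its external boundary is at most $(2\lfloor(1/p)\log(1/p)\rfloor+2)^2$, so a union bound gives
\[
\P(\cE_1)\le \left(\tfrac{2}{p}\log\left(\tfrac{1}{p}\right)+2\right)^2 q,
\]
which vanishes under the assumption $q\ll p^2/(\log(1/p))^2$. For $\cE_2$, the total number of rows and columns is $4\lfloor(1/p)\log(1/p)\rfloor$, and each row (or column) avoids every $1$ with probability $(1-p-q)^{2\lfloor(1/p)\log(1/p)\rfloor}\le (1-p)^{2\lfloor (1/p)\log(1/p)\rfloor-1}$, so
\[
\P(\cE_2)\le 2\cdot\tfrac{2}{p}\log\left(\tfrac{1}{p}\right)\cdot (1-p)^{\frac{2}{p}\log(1/p)-1},
\]
which tends to $0$ as $p\to 0$ since $(1-p)^{(2/p)\log(1/p)}\le e^{-2\log(1/p)}=p^{2}$.

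There is no real obstacle: the only delicate point is to make sure that the set on which we forbid $2$s matches the definition used in this section, namely $B$ together with its external boundary in $\bZ^2$ (rather than the surrounding eight $p$-boxes as in Section~3), and that the size of this set is still only of order $((1/p)\log(1/p))^2$, which is what makes the assumption $q\ll p^2/(\log(1/p))^2$ exactly strong enough.
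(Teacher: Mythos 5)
Your proposal is correct and takes essentially the same route as the paper, which simply notes that the proof is analogous to that of Lemma~\ref{lem:p-boxcrossablewhp} (union bound over an initial $2$ in the box together with its external boundary, and over a row or column with no initial $1$) and omits the details. One tiny misstatement that does not affect the argument: the probability that a row contains no $1$ is $(1-p)^{2\lfloor(1/p)\log(1/p)\rfloor}$ rather than $(1-p-q)^{2\lfloor(1/p)\log(1/p)\rfloor}$, but since $1-p-q\le 1-p$ your displayed bound remains valid and the conclusion is unchanged.
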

\begin{proof}
    The proof is similar to the proof of Lemma \ref{lem:p-boxcrossablewhp}, so we omit it.
\end{proof}

We cover the plane with overlapping $q$-boxes such that the plane is tiled by the center boxes of the $q$-boxes, and we assume that one such $q$-box is centered at the origin.
Call a $q$-box \textbf{good} if it is fillable and all of the $q$-boxes that overlap with this $q$-box are also fillable. 

\begin{prop}\label{prop:q-boxgood}
    A $q$-box is good with high probability provided that $q \ll \frac{p^2}{(\log(1/p))^2}$.
\end{prop}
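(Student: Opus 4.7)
Since the ``good'' event is the intersection of fillability over $\cO(1)$ overlapping $q$-boxes, it will suffice by the union bound to prove that a single $q$-box is fillable with high probability. Fillability is the conjunction of (F1)--(F4), so I will show that each of these four conditions fails with probability $o(1)$ as $p\to 0$ under the standing hypothesis $q \ll p^2/(\log(1/p))^2$.

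Conditions (F1) and (F2) should follow from standard Peierls-type arguments based on Lemma~\ref{lem:p-boxcrossable}, which provides $\rho := \P(\text{a given } p\text{-box is not } 1\text{-crossable}) = o(1)$. A failure of (F1) will force a $*$-connected path of non-$1$-crossable $p$-boxes from the center box to the boundary of the $q$-box, of length (in $p$-box units) $\gtrsim p\log(1/q)/(q\log(1/p)) \to \infty$; a failure of (F2) will force a connected path of non-$1$-crossable $p$-boxes of length $\geq \log(1/q) \to \infty$. Since the $1$-crossability of a given $p$-box depends only on the configuration inside the $3 \times 3$ block of $p$-boxes centered on it, any self-avoiding path of length $\ell$ contains $\gtrsim \ell/25$ mutually independent non-$1$-crossability events. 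The expected number of such paths starting anywhere in the $q$-box is therefore dominated by $\text{poly}(1/q) \cdot (C\rho^{1/25})^{\ell_0}$, which is $o(1)$.

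Conditions (F3) and (F4) will be handled by direct counting. For (F4), each row or column of the center box has length $2\lfloor(1/q)\log(1/q)\rfloor$, and so contains no initial $2$ with probability $(1-q)^{2\lfloor(1/q)\log(1/q)\rfloor} = \cO(q^2)$; the union bound over $\cO((1/q)\log(1/q))$ rows and columns gives $\cO(q\log(1/q)) \to 0$. The $5 \times 5$ subcondition of (F3) fails at a fixed position with probability at most $\binom{25}{3}q^3$, yielding $\cO(q(\log(1/q))^2) \to 0$ after the union bound over positions. For the frame subcondition of (F3), since a frame with either dimension exceeding $s := 3\lfloor(1/p)\log(1/p)\log(1/q)\rfloor$ cannot fit in any $s \times s$ window, I reduce to bounding the probability that the $q$-box contains some frame of both dimensions $\leq s$. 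A frame of dimensions $a \times b$ requires two state-$2$ vertices in each of the four ``near-side'' strips (each of size $\leq 3\max(a,b)$); stratifying over the number $k \in \{4, \ldots, 8\}$ of distinct state-$2$ positions involved (the minimum $k=4$ being attained only by the ``four-corner'' configurations where each corner plays two roles), a case analysis will yield $\P(\text{fixed } a \times b \text{ rectangle is a frame}) \lesssim \sum_{k=4}^{8} (ab)^{k-4} q^k$. Summing over shapes $a, b \leq s$ and over positions in the $q$-box, and substituting $q \leq \epsilon p^2/(\log(1/p))^2$ with $\epsilon = \epsilon(p) \to 0$, each of the five resulting terms reduces to an expression of the form $\cO(\epsilon^{k-2} p^2 (\log(1/q))^{2k-4}/(\log(1/p))^2)$, which tends to $0$ for every admissible $k$ whether $\log(1/q) = \cO(\log(1/p))$ or $\log(1/q) \gg \log(1/p)$.

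The main obstacle will be the frame subcondition of (F3): the naive combinatorial sum over rectangle shapes produces a large polynomial factor, and one must carefully exploit both the decay in $q^k$ across every possible level of position-coincidence and the logarithmic slack in the hypothesis $q \ll p^2/(\log(1/p))^2$. It is the factor $p^2/(\log(1/p))^2$ arising from this hypothesis that ultimately suppresses the polylogarithmic growth in $\log(1/q)$, forcing the expected frame count in the $q$-box to tend to zero across all coincidence levels.
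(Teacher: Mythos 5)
Your proposal is correct and, at the level of overall structure, matches the paper's proof: a union bound over the $\cO(1)$ overlapping $q$-boxes, then separate high-probability verification of (F1)--(F4), with Peierls-type dual-path counts for (F1)--(F2) (exactly the paper's Lemma on (F1)/(F2), using $1$-dependence of $1$-crossability) and direct first-moment counts for the $5\times 5$ part of (F3) and for (F4). The one genuinely different piece is the frame part of (F3). The paper covers the $q$-box by half-overlapping squares of side $6\floor{\frac1p\log(1/p)\log(1/q)}$ and bounds three scenarios built from two parallel lines (each carrying two of the defining $2$s within distance $2$) plus one perpendicular line, yielding bounds of order $q^2/p^3$, $q^3/p^5$, $q^4/p^7$ times polylogs; this uses only part of the frame structure and needs only $q\le p^2$. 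You instead sum directly over rectangle positions and shapes $a,b\le s$ and stratify by the number $k\in\{4,\dots,8\}$ of distinct defining $2$s, with the per-rectangle bound $\sum_k (ab)^{k-4}q^k$; this exhausts all coincidence patterns but requires the case analysis you defer (it does check out: each merge of roles from two sides confines the merged vertex to an $\cO(1)$ or $\cO(\max(a,b))$ region, saving a factor comparable to $ab$ per unit drop in $k$, with the thin case $\min(a,b)\le 5$ only costing constants; also, $k=4$ is attained by near-corner configurations besides exact four-corner ones, still $\cO(1)$ per rectangle). Your final estimate is fine --- after summing, each $k$-term is at most a constant times $q(\log(1/q))^{2k-4}\to 0$ --- but note that in the regime $\log(1/q)\gg\log(1/p)$ it is this spare power of $q$, not the factor $p^2/(\log(1/p))^2$, that kills the $(\log(1/q))^{2k-4}$ growth, so your closing sentence should be phrased accordingly; your approach, like the paper's, in fact only needs $q\le p^2$ for this step.
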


Proposition\ref{prop:q-boxgood} will follow from the next three lemmas, which show that conditions (F1)--(F4) of a $q$-box to be fillable occur with high probability. 

\begin{lemma}\label{lem:F1F2}
    A $q$-box satisfies conditions (F1) and (F2) with high probability provided that $q \ll \frac{p^2}{(\log(1/p))^2}$.
\end{lemma}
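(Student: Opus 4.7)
The plan is to apply a Peierls-type contour argument on the grid of $p$-boxes tiling the $q$-box, in the same spirit as Lemma~\ref{lem:highway}. By Lemma~\ref{lem:p-boxcrossable}, each $p$-box fails to be $1$-crossable with probability $\rho = \rho(p,q) \to 0$ under the assumption $q \ll p^2/(\log(1/p))^2$. Because $1$-crossability is determined by the initial configuration in the $p$-box itself together with its eight neighboring $p$-boxes (to rule out $2$s on the external boundary), any self-avoiding path of $j$ $p$-boxes contains at least $\lceil j/10\rceil$ pairwise independent members, so the probability that every $p$-box on such a path fails to be $1$-crossable is at most $\rho^{j/10}$.

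Let $N \asymp (p/q)\log(1/q)/\log(1/p)$ denote the number of $p$-boxes spanning the annular region separating the center box from the boundary of the enclosing $q$-box; under our hypothesis, $N \to \infty$ as $p \to 0$. For (F1), I would use planar duality: the failure event forces the existence of a $*$-connected (diagonal-step) path of non-$1$-crossable $p$-boxes joining the external boundary of the center box to the boundary of the $q$-box, necessarily of length at least $N/2$. Bounding the number of length-$j$ such paths from a given starting $p$-box by $8\cdot 7^{j-1}$ and summing over the $O(N)$ possible starts along the boundary of the center box, the expected number of such dual paths is at most
$$CN\sum_{j\ge N/2} j\cdot 8\cdot 7^{j-1}\rho^{j/10},$$
which is summable and vanishes as $p \to 0$ once $7\rho^{1/10} < 1/2$. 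For (F2), the failure event implies the existence of a connected cluster of non-$1$-crossable $p$-boxes of diameter exceeding $\log(1/q)$ within the $q$-box, hence a self-avoiding path of at least $\log(1/q)$ non-$1$-crossable $p$-boxes. A union bound over the $O(N^2)$ possible starting $p$-boxes in the $q$-box yields the bound $O(N^2)\cdot(7\rho^{1/10})^{\log(1/q)}$, which again tends to $0$.

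The main technical point to verify is that the quantitative decay of $\rho$ is strong enough at the borderline $q \asymp p^2/(\log(1/p))^2$ to overwhelm both the polynomial factor $N^2$ and the exponential path-count $7^{\log(1/q)}$. From the proof of Lemma~\ref{lem:p-boxcrossablewhp} we have
$$\rho \;\lesssim\; \left(\tfrac{\log(1/p)}{p}\right)^{\!2}\!q \;+\; \tfrac{\log(1/p)}{p}(1-p)^{(2/p)\log(1/p)},$$
so $\log(1/\rho) \to \infty$, while $\log(1/q) = (2+o(1))\log(1/p)$ and $\log N = O(\log(1/p))$; thus $\log(1/\rho)\cdot\log(1/q)$ dominates $\log N^2$, which is the only delicate estimate and secures both (F1) and (F2).
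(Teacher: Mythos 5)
Your proposal follows essentially the same route as the paper: both (F1) and (F2) are reduced via planar duality to the existence of a long path (allowing diagonal steps) of non-$1$-crossable $p$-boxes --- a dual crossing of the annulus for (F1), and a long blocking path surrounding any large cut-off component for (F2) --- and the $8\cdot 7^{j-1}$ path entropy is beaten using the $1$-dependence of crossability (the exponent $j/10$), exactly as in the paper's proof. Your reductions and your (F1) estimate are fine, since there the contour length $N/2\to\infty$ and only $7\rho^{1/10}<1/2$ is needed.

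The weak point is your final verification for (F2), which you carry out only ``at the borderline'' $q\asymp p^2/(\log(1/p))^2$, invoking $\log(1/q)=(2+o(1))\log(1/p)$ and $\log N=O(\log(1/p))$. These relations fail elsewhere in the hypothesis $q\ll p^2/(\log(1/p))^2$: for instance $q=e^{-1/p}$ gives $\log(1/q)=1/p$ and $\log N\asymp 1/p$, and it is not obvious that the borderline is the worst case, because shrinking $q$ simultaneously inflates the entropy factors $N^2$ and $7^{\log(1/q)}$ that the contour decay must absorb. The fix is immediate and shows no rate for $\rho$ is actually needed: since $N\le 4q^{-1}\log(1/q)$, one always has $\log(N^2)\le (2+o(1))\log(1/q)$, so $N^2\bigl(7\rho^{1/10}\bigr)^{\log(1/q)}\to 0$ as soon as $\rho$ lies below a fixed constant (say $\rho<(7e^3)^{-10}$), which Lemma~\ref{lem:p-boxcrossable} guarantees for all small $p$ in the stated regime. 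This is precisely how the paper argues, fixing a small constant $\delta$ for each of (F1) and (F2) rather than tracking the decay of $\rho$.
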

\begin{proof}
    For a fixed $q$-box, if (F1) fails, then there exists a dual path consisting of non-$1$-crossable $p$-boxes from the external boundary of the center box to the boundary of the whole $q$-box.
    Note that this path may take diagonal steps.
    Such a path has length at least 
    \[
    \frac{\floor{(1/q)\log(1/q)}}{2\floor{(1/p)\log(1/p)}} \geq \frac{p\log(1/q)}{3q\log(1/p)}.
    \]
    From any initial location, the number of paths (allowing diagonal steps) of length $j$ is at most $8\cdot 7^{j - 1}$.
    Furthermore, the $p$-boxes are $1$-dependent in the initial configuration, and along one side of the external boundary of the center box there are at most 
    \[
    \frac{4\floor{(1/q)\log(1/q)}}{2\floor{(1/p)\log(1/p)}} + 3 \leq 
    \frac{3p\log(1/q)}{q\log(1/p)}
    \]
    tiled $p$-boxes.
    Set $r := \frac{p\log(1/q)} {q\log(1/p)}$ for convenience, and note that $r \to \infty$ as $p \to 0$.
    By Lemma \ref{lem:p-boxcrossable}, we may choose an arbitrarily small constant $\delta > 0$, and the probability that a $p$-box is not $1$-crossable is at most $\delta$ for $p$ sufficiently small and $q \ll \frac{p^2}{(\log(1/p))^2}$.
    Choose $\delta < 7^{-10}$.
    It follows that the expected number of these dual paths is at most
    \[
    12r \cdot \sum_{j \geq r/3}   
    8\cdot 7^{j - 1} \cdot \delta^{j/10} 
    = \cO(r (7\delta^{1/10})^{r/3})
    \to 0
    \]
    as $p \to 0$.

    Now for (F2), if such a connected component exists with diameter more than $\log(1/q)$, then there is a circuit and thus a path (allowing diagonal steps) of non-$1$-crossable $p$-boxes with length at least $\log(1/q)$ surrounding this component.
    Using a similar argument to the above with $\delta < (7e^3)^{-10}$, we have that the expected number of these paths is at most
    \[
    \left(\frac{4}{q} \log\left(\frac{1}{q}\right)\right)^2 \sum_{j \geq \log (1/q)}  8\cdot 7^{j - 1}\cdot \delta^{j/10} 
    = \cO((1/q^2) (\log(1/q))^2 (e^{-3})^{\log(1/q)}) \to 0
    \]
    as $p \to 0$.
\end{proof}

\begin{lemma}\label{lem:F3}
    A $q$-box satisfies condition (F3) with high probability provided that $q \le p^2$.
\end{lemma}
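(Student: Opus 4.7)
The plan is to verify each of the two clauses of (F3) separately by a union bound over relevant substructures inside the $q$-box. The ``every $5\times 5$ square contains at most two state-$2$ vertices'' clause is straightforward: a fixed $5\times 5$ square fails with probability at most $\binom{25}{3}q^3$, and the $q$-box admits $O(q^{-2}(\log(1/q))^2)$ positions for such a square, so the total failure probability is $O(q(\log(1/q))^2) \to 0$ under $q\le p^2$ as $p\to 0$.

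For the ``no frame inside any sub-square of side $s := 3\lfloor\tfrac{1}{p}\log(1/p)\log(1/q)\rfloor$'' clause, I would bound the expected number of frames in the $q$-box by summing over candidate rectangles $R$ of dimensions $a\times b$ with $\max(a,b)\ge 6$ and $\max(a,b)\le s$, splitting on the short side. In the fat case $\min(a,b)\ge 6$, the four boundary bands of width $3$ along the sides of $R$ are pairwise disjoint, and on each band the frame definition demands a state-$2$ on the boundary side plus an \emph{additional} state-$2$ within distance $2$, i.e.\ at least two state-$2$s per band; by independence across the bands, $\mathbb{P}(R\text{ is a frame})\lesssim a^4 b^4 q^8$. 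In the thin case, say $a\ge 6$ and $b\le 5$, the top and bottom bands may overlap, but the left band (columns $1,2,3$) and the right band (columns $a-2,a-1,a$) remain disjoint, and each must still host at least two state-$2$s, so $\mathbb{P}(R\text{ is a frame})\lesssim q^4$ with a constant independent of $a,b$.

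Summing over the $O(q^{-2}(\log(1/q))^2)$ positions of each rectangle, the fat case contributes $\lesssim s^{10}q^{6}(\log(1/q))^2$ to the expected frame count and the thin case contributes $\lesssim s\,q^{2}(\log(1/q))^2$. Using that $\alpha^c(\log(1/\alpha))^k$ is increasing in $\alpha$ on a neighborhood of $0$, the worst case over $q\in(0,p^2]$ occurs at the upper endpoint $q=p^2$; inserting $q=p^2$ and $s\lesssim p^{-1-o(1)}$ shows that both contributions are bounded by $p^{2-o(1)}$ and hence vanish as $p\to 0$. The principal technical point is the thin case, where I expect to need some care to verify that the ``additional'' adjective in the frame definition genuinely forces two \emph{distinct} state-$2$s inside each side-band of width $3$; once this is established, the disjointness of the two side-bands (valid as soon as $a\ge 6$) supplies the independence needed for the $q^4$ bound.
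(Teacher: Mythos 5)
Your treatment of the $5\times 5$ clause and of the thin rectangles matches the paper, but the fat case contains a genuine error, and it is exactly the point the paper's proof is organized around. The four width-$3$ boundary bands of an $a\times b$ rectangle with $a,b\ge 6$ are \emph{not} pairwise disjoint: adjacent bands (say the top band, rows $1$--$3$, and the left band, columns $1$--$3$) overlap in a $3\times 3$ corner square, and a single state-$2$ vertex there can simultaneously witness the requirements for two perpendicular sides. In particular, placing one $2$ at each of the four corner vertices of $R$ already makes $R$ a frame, so $\P(R\text{ is a frame})\ge q^4$. This contradicts your claimed bound $\lesssim a^4b^4q^8$: for a $6\times 6$ rectangle, or more generally whenever $q\ll p^2$ (which the lemma must allow, since it only assumes $q\le p^2$), one has $q^4\gg a^4b^4q^8$. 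So the independence-across-bands step, and the resulting $q^8$ exponent, are false; the frame definition only forces at least \emph{four} distinct $2$s, not eight. This corner-sharing is precisely what the paper's proof handles by its three scenarios (the perpendicular line reuses two, one, or none of the four $2$s already placed near the two parallel long lines), yielding the bounds $q^2/p^3$, $q^3/p^5$, $q^4/p^7$.

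The damage is repairable within your framework: a correct uniform per-rectangle bound is $O(q^4)$ (dominant contribution from configurations in which corner $2$s do double duty, which have only $O(1)$ positional freedom, plus higher-order terms with more $2$s and more freedom), and plugging $O(q^4)$ into your union bound over $O\bigl(q^{-2}(\log(1/q))^2\, s^2\bigr)$ rectangles gives an expected frame count $\lesssim s^2 q^2(\log(1/q))^2$, which still tends to $0$ under $q\le p^2$ by your monotonicity-in-$q$ observation. But as written, the fat-case probability estimate is wrong, and the case analysis over shared corner $2$s — the actual crux of this lemma — is missing; your worry about the thin case (distinctness of the ``additional'' $2$) is, by contrast, unproblematic.
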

\begin{proof}
    First, the expected number of $5\times 5$ squares within the $q$-box that contain at least three initial $2$s is at most a constant times 
    \[
    \left((1/q)\log(1/q)\right)^2 \cdot q^3
    = q\cdot (\log(1/q))^2\to 0.
    \]

    Now cover the $q$-box by squares of side length $6 \floor{\frac{1}{p}\log(1/p)\log(1/q)}$, which half overlap with neighboring squares. 
    The number of such squares required to cover the $q$-box is at most
    a constant times 
    \begin{equation}\label{eqn:F3-squares}
    \left(\frac{(1/q)\log(1/q)}{(1/p)\log(1/p)\log(1/q)}\right)^2 = \frac{1}{(\log(1/p))^2} \cdot \frac{p^2}{q^2}.
    \end{equation}

    If the longest side of a frame has at least 6 sites, then there are two parallel lines at distance at least 5, each with two state-$2$ sites within distance 2 of it. In addition, there is a perpendicular line that has within distance 2 either: 
    two of these four $2$s;
    or one of these four 2s and an additional $2$; or two additional $2$s. (If the first two lines are vertical, then the third line may be, say, the top line of the frame, which must have two $2$s within distance $2$.) In order, the three scenarios have their probabilities bounded by $(\log(1/p)\log(1/q))^{10}$ times: 
    \begin{equation*}
        \begin{aligned}
         &\frac{p^2}{q^2} \cdot \frac{1}{p^2} \cdot  \frac{q^4}{p^3}=
    \frac{q^2}{p^3};\\
    &
    \frac{p^2}{q^2} \cdot \frac{1}{p^2} \cdot  \frac{q^5}{p^5}=
    \frac{q^3}{p^5}; \text{ and}
    \\&
    \frac{p^2}{q^2} \cdot \frac{1}{p^3} \cdot  \frac{q^6}{p^6}=
    \frac{q^4}{p^7}.   
        \end{aligned}
    \end{equation*}
   For example, the first factor in the second probability bound 
   represents the number of squares needed to cover the $q$-box bounded by (\ref{eqn:F3-squares}) and the second factor bounds the number of lines within a fixed square. Finally, for the third factor, we first choose the four $2$s on the two original lines, which gives the factor $(q/p)^4$, and then an additional $2$ on a perpendicular line through one of the chosen four locations, which gives the additional factor $q/p$. 
    As $q\le p^2$, the three probabilities approach $0$ as $p\to 0$.   
\end{proof}

\begin{lemma}\label{lem:F4}
    A $q$-box satisfies condition (F4) with high probability as $q \to 0$.
\end{lemma}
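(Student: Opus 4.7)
The plan is to prove this by a straightforward union bound over rows and columns of the center box, using the fact that the center box has linear dimension of order $(1/q)\log(1/q)$, which is precisely the scale at which the probability that a single row (or column) misses a $2$ is of order $q^2$.

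First, I would fix notation: the center box has side length $n := 2\floor{(1/q)\log(1/q)}$, and thus it contains exactly $n$ rows and $n$ columns. For any one such row (or column), the probability that it contains no initial $2$ is $(1-q)^n$. Since $\log(1-q) \le -q$, we obtain
\[
(1-q)^n \le e^{-qn} \le e^{-2\log(1/q)+O(q)} = q^{2}\bigl(1+o(1)\bigr).
\]

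Next, I would apply a union bound. The event that (F4) fails is contained in the union over the $n$ rows and $n$ columns of the event that the given line contains no $2$. Thus
\[
\P(\text{(F4) fails}) \le 2n\cdot (1-q)^n \le 2n \cdot q^2\bigl(1+o(1)\bigr) = O\bigl(q\log(1/q)\bigr),
\]
which tends to $0$ as $q\to 0$. This is the entire argument; there is no real obstacle, since the scale of the center box was chosen exactly to make this calculation work. The only thing to keep straight is that the bound is only needed for fixed $q$-boxes (not a union over them), and that (F4) is a condition on the initial configuration alone, so independence of disjoint rows/columns is not even required — the union bound suffices.
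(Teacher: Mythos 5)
Your proposal is correct and is essentially identical to the paper's proof: a union bound over the roughly $(4/q)\log(1/q)$ rows and columns of the center box, with each line missing a $2$ with probability $(1-q)^{2\lfloor(1/q)\log(1/q)\rfloor}\approx q^2$, giving an overall failure probability of order $q\log(1/q)\to 0$. No differences worth noting.
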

\begin{proof}
    The probability that a fixed $q$-box does not satisfy (F4) is at most
    \[
    2\left(\frac{2}{q}\log\left(\frac{1}{q}\right)\right) (1 - q)^{\frac{2}{q}\log(1/q) - 2} \to 0
    \]
    as $q \to 0$.
\end{proof}

\begin{proof}[Proof of Proposition \ref{prop:q-boxgood}]
    For any fixed $q$-box $B$, there are at most $24$ other $q$-boxes that share a common vertex or neighboring vertex with $B$.
    When $q \ll \frac{p^2}{(\log(1/p))^2}$, each of these $q$-boxes (at most $25$ in total) is fillable with high probability by Lemmas \ref{lem:F1F2}, \ref{lem:F3}, and \ref{lem:F4}.
    Hence the probability that $B$ is not good tends to $0$ as $p \to 0$ and $q \ll \frac{p^2}{(\log(1/p))^2}$ by the union bound. 
\end{proof}

\subsection{Big boxes}

Recall that a big box has dimensions $2L$ by $2L$ with $L = \floor{e^{\epsilon/q}}\cdot \floor{(1/q)\log(1/q)}$.
(The extra factor ensures exact partition of a big box with center boxes of $q$-boxes.)
Consider the four sub-rectangles (two vertical and two horizontal) of dimensions $2L$ by $L - 3\floor{q^{-2}}$ and $L - 3\floor{q^{-2}}$ by $2L$ along the boundary of the big box, each of whose long side is $3\floor{q^{-2}}$ away from the boundary of the big box. (We need this correction to prevent interference with the ignition of $2$s; see Lemma~\ref{lem:spreadtoorigin}).
We call a big box \textbf{successful} if the following conditions are satisfied:
\begin{enumerate}
    \item[(S1)] 
    each of the four sub-rectangles are crossed the long way by paths of good $q$-boxes (see Figure \ref{fig:S1});
    
    \item[(S2)] 
   from each good $q$-box $Q$ in the set of crossings given in (S1), along the same roughly horizontal (or vertical) path that contains $Q$ one can find a good $q$-box that contains a $1$-nucleus in its circuit as in (F1) within $\floor{q^4 L}$ $q$-boxes along the path; and
    
    \item[(S3)] 
    the bootstrap percolation of $2$s internal to the region within  $\ell^\infty$-distance $L$ of the big box produces no connected component of $2$s with diameter larger than $1/q$.
\end{enumerate}

\begin{figure}[h!]
\label{fig:S1}
    \centering
        \resizebox{0.75\linewidth}{!}{

\begin{tikzpicture}[scale=0.15,>=Stealth]
  \def\qside{1.0}        
  \def\L{20}             
  \def\buf{3.5}          
  \def\amp{3}            
  \def\seedVleft{31415}  
  \def\seedVright{27184}
  \def\seedHbot{14146}
  \def\seedHtop{17323}
  \pgfmathsetmacro{\BigSide}{2*\L + 1}
  \pgfmathsetmacro{\step}{\qside/2}             
  \pgfmathsetmacro{\quarter}{\qside/4}          

  \draw[thick] (0,0) rectangle (\BigSide,\BigSide);
  \node[below] at (\BigSide/2,-1) {big box};

  \draw[dotted,thick] (0,\L) -- (\BigSide,\L);
  \draw[dotted,thick] (0,\L+1) -- (\BigSide,\L+1);
  \draw[dotted,thick] (\L,0) -- (\L,\BigSide);
  \draw[dotted,thick] (\L+1,0) -- (\L+1,\BigSide);

  \draw[dotted,thick] (0,\buf) -- (\BigSide,\buf);
  \draw[dotted,thick] (0,\BigSide-\buf) -- (\BigSide,\BigSide-\buf);
  \draw[dotted,thick] (\buf,0) -- (\buf,\BigSide);
  \draw[dotted,thick] (\BigSide-\buf,0) -- (\BigSide-\buf,\BigSide);

  \node[left]  at (0.5, \BigSide - 1.5) {$3\lfloor q^{-2}\rfloor$};
  \node[left]  at (0.5, 1.5)            {$3\lfloor q^{-2}\rfloor$};
  \node[above] at (1.25, \BigSide)      {$3\lfloor q^{-2}\rfloor$};
  \node[above] at (\BigSide - 1.25, \BigSide) {$3\lfloor q^{-2}\rfloor$};

  \pgfmathsetmacro{\XL}{\L/2 - 1/4}
  \pgfmathsetmacro{\XR}{\BigSide-\L/2 - 1/4}
  \pgfmathsetmacro{\YB}{\L/2 - 1/4}
  \pgfmathsetmacro{\YT}{\BigSide-\L/2 - 1/4}

  \pgfmathsetmacro{\xminwig}{\buf + \step}
  \pgfmathsetmacro{\xmaxwig}{\BigSide - \buf - \step}

  \pgfmathsetmacro{\ymin}{\quarter + 1/2}              
  \pgfmathsetmacro{\ymax}{\BigSide - \quarter}
  \pgfmathsetmacro{\xmin}{\quarter + 1/2}
  \pgfmathsetmacro{\xmax}{\BigSide - \quarter}

  \newcommand{\qcyan}[2]{%
    \draw[fill=cyan!40,draw=cyan!60!black] (#1-\qside/2,#2-\qside/2) rectangle ++(\qside,\qside);}
  \newcommand{\qorg}[2]{%
    \draw[fill=cyan!40,draw=cyan!60!black] (#1-\qside/2,#2-\qside/2) rectangle ++(\qside,\qside);}

  \newcommand{\VerticalRandom}[2]{
    \pgfmathsetseed{#2}
    \pgfmathtruncatemacro{\T}{int((\ymax-\ymin)/\step)} 
    \pgfmathtruncatemacro{\nmin}{ceil((\xminwig-#1)/\step)}
    \pgfmathtruncatemacro{\nmax}{floor((\xmaxwig-#1)/\step)}
    \def\prevn{0}
    \def\prevsg{0}
    \foreach \t in {0,...,\T}{
      \pgfmathsetmacro{\y}{\ymin + \t*\step}
      \pgfmathtruncatemacro{\u}{floor(100*rnd)}
      \pgfmathtruncatemacro{\sgprop}{ (\u<25 ? -1 : (\u<75 ? 0 : 1)) }
      \pgfmathtruncatemacro{\isCross}{ (abs(\y-\YB)<1e-6) + (abs(\y-\YT)<1e-6) }
      \pgfmathtruncatemacro{\isBounce}{ (abs(\prevsg)==1) && (\sgprop+\prevsg==0) && (\isCross==0) ? 1 : 0 }
      \pgfmathtruncatemacro{\sg}{ (\isBounce==1 ? 0 : \sgprop) }
      \pgfmathtruncatemacro{\ntmp}{min(max(\prevn+\sg,\nmin),\nmax)}
      \pgfmathtruncatemacro{\n}{ (\isCross>0 ? 0 : \ntmp) }
      \pgfmathtruncatemacro{\d}{\n-\prevn}
      \pgfmathtruncatemacro{\kone}{ (abs(\d)>0 ? 1 : 0) }
      \pgfmathtruncatemacro{\sgn}{ (\d>0 ? 1 : (\d<0 ? -1 : 0)) }
      \pgfmathsetmacro{\ybr}{\y - \step}
      \foreach \k in {1,...,\kone}{
        \pgfmathsetmacro{\xk}{#1 + (\prevn+\sgn*\k)*\step}
        \qcyan{\xk}{\ybr}
      }
      \pgfmathsetmacro{\xc}{#1 + \n*\step}
      \qcyan{\xc}{\y}
      \xdef\prevn{\n}
      \xdef\prevsg{\sgn}
    }
  }

  \newcommand{\HorizontalRandom}[2]{
    \pgfmathsetseed{#2}
    \pgfmathtruncatemacro{\T}{int((\xmax-\xmin)/\step)} 
    \pgfmathtruncatemacro{\mmin}{ceil((\buf+\step-#1)/\step)}
    \pgfmathtruncatemacro{\mmax}{floor(((\BigSide-\buf-\step)-#1)/\step)}
    \def\prevm{0}
    \def\prevsg{0}
    \foreach \t in {0,...,\T}{
      \pgfmathsetmacro{\x}{\xmin + \t*\step}
      \pgfmathtruncatemacro{\u}{floor(100*rnd)}
      \pgfmathtruncatemacro{\sgprop}{ (\u<25 ? -1 : (\u<75 ? 0 : 1)) }
      \pgfmathtruncatemacro{\isCross}{ (abs(\x-\XL)<1e-6) + (abs(\x-\XR)<1e-6) }
      \pgfmathtruncatemacro{\isBounce}{ (abs(\prevsg)==1) && (\sgprop+\prevsg==0) && (\isCross==0) ? 1 : 0 }
      \pgfmathtruncatemacro{\sg}{ (\isBounce==1 ? 0 : \sgprop) }
      \pgfmathtruncatemacro{\mtmp}{min(max(\prevm+\sg,\mmin),\mmax)}
      \pgfmathtruncatemacro{\m}{ (\isCross>0 ? 0 : \mtmp) }
      \pgfmathtruncatemacro{\d}{\m-\prevm}
      \pgfmathtruncatemacro{\kone}{ (abs(\d)>0 ? 1 : 0) }
      \pgfmathtruncatemacro{\sgn}{ (\d>0 ? 1 : (\d<0 ? -1 : 0)) }
      \pgfmathsetmacro{\xbr}{\x - \step}
      \foreach \k in {1,...,\kone}{
        \pgfmathsetmacro{\yk}{#1 + (\prevm+\sgn*\k)*\step}
        \qorg{\xbr}{\yk}
      }
      \pgfmathsetmacro{\yc}{#1 + \m*\step}
      \qorg{\x}{\yc}
      \xdef\prevm{\m}
      \xdef\prevsg{\sgn}
    }
  }

  \VerticalRandom{\XL}{\seedVleft}   
  \VerticalRandom{\XR}{\seedVright}  
  \HorizontalRandom{\YB}{\seedHbot}  
  \HorizontalRandom{\YT}{\seedHtop}  

  \coordinate (Label)  at (\BigSide+5,\BigSide/2 + 2);
  \coordinate (Target) at (\XR,\YB);
  \draw[->,thick] (Label) -- (Target);
  \node[right] at (Label) {$q$-boxes};
\end{tikzpicture}  
        }
    \caption{The sub-rectangles and the crossings of good $q$-boxes described in (S1). The paths follow nearest-neighbor adjacency of $q$-boxes. We reserve a buffer zone of width $3\floor{q^{-2}}$ along the big box's boundary for later use.}
\end{figure}


The next lemma demonstrates that 
the conditions ensure that the $1$-nuclei are not too far from the target $q$-boxes so that the spread of $1$s is complete before $2$s can intervene. Then we will show that these conditions hold with high probability, with arguments analogous to those for 
Lemmas~\ref{lem:highway}, \ref{lem:find1-nucleus}, and \ref{lem:nointerventionof2s}.

\begin{prop}\label{prop:bigboxfilling}
    If a big box is successful, then the center box of every good $q$-box in each of the four crossings from (S1) has all of its $0$s eliminated by time $qL$ for any sufficiently small $q$.
\end{prop}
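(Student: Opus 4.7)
The plan is to trace how $1$s ignite at a nearby $1$-nucleus guaranteed by (S2), spread through the network of $1$-crossable $p$-boxes that make up the (F1) circuits of good $q$-boxes on the crossing, and eventually fill the (F1) circuit of the target good $q$-box $Q^*$. Once this circuit is entirely in state $1$, Lemma~\ref{lem:0elimination} eliminates the remaining $0$s in its interior within bounded time, which in particular clears all $0$s in the center box of $Q^*$. The challenges are to bound the propagation time sharply and to ensure that the scarce but present $2$s cannot interfere.

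Concretely, for any good $q$-box $Q^*$ on a crossing from (S1), (S2) supplies a good $q$-box $Q_0$ within $\lfloor q^4 L\rfloor$ $q$-box steps whose (F1) circuit contains a $1$-nucleus, i.e., a $p$-box internally spanned by $1$s in time $\cO((1/p)^2(\log(1/p))^2)$. Once ignited, $1$s propagate through the $1$-crossable $p$-boxes: each such $p$-box has no $2$s in it or on its external boundary and contains a $1$ in every row and every column, so when its external boundary is occupied by $1$s, the interior fills via standard threshold-$2$ dynamics in $\cO((1/p)\log(1/p))$ further steps. Since adjacent good $q$-boxes overlap, $1$s pass from the (F1) circuit of one $q$-box to the next along the crossing. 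The number of $p$-box fillings required to reach $Q^*$'s circuit is $\cO\bigl(q^4 L\cdot (1/q)\log(1/q)/((1/p)\log(1/p))\bigr)$, giving a total propagation time of $\cO(q^3 L\log(1/q))$. Since $qL\gtrsim e^{\epsilon/q}\log(1/q)$, this is $o(qL)$. Throughout this window, (S3) confines every $2$-cluster to diameter at most $1/q$, and combined with (F3)'s local sparsity this prevents any $2$-cluster from spanning a $1$-crossable $p$-box faster than the $1$ front traverses it.

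After $Q^*$'s circuit is occupied by $1$s, the $1$-crossable $p$-boxes inside the circuit that are connected to it fill up by the same argument, leaving only the non-$1$-crossable components described in (F2) as potentially unresolved interior subregions. Each such component $S$ has cell-diameter at most $\cO(\log(1/q)\cdot(1/p)\log(1/p))$, its external boundary is in state $1$, and (F3) guarantees both that $S$ contains no frame (since its diameter falls below the threshold $3\lfloor (1/p)\log(1/p)\log(1/q)\rfloor$) and that every $5\times5$ square meeting $S$ has at most two initial $2$s. Lemma~\ref{lem:0elimination} then eliminates every $0$ in $S$ within $|S|=\cO((\log(1/q))^2(1/p)^2(\log(1/p))^2)$ additional steps, and this quantity is still $o(qL)$ since $qL$ grows at least like $e^{\epsilon/q}\log(1/q)$. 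Adding the propagation and elimination times gives the conclusion well within the allowed budget of $qL$.

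The principal obstacle is the timing argument inside Step 2: the $2$-cluster bound $1/q$ from (S3) is at a scale strictly larger than a $p$-box, so one cannot simply claim that $2$s are absent along the route. Instead one has to argue a genuine race, using the fact that $2$s can only spread where $1$s already reside, together with (F3)'s sparsity, so that the $1$ front outpaces any $2$ growth near a $1$-crossable $p$-box before the $p$-box is crossed. A secondary nuisance is checking that the boundary $1$s of each non-$1$-crossable component remain $1$s (rather than flipping to $2$) for the full $|S|$ steps required by Lemma~\ref{lem:0elimination}; this again follows from (S3) and (F3) but must be stated carefully because of non-monotonicity.
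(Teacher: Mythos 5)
Your overall architecture matches the paper's: ignite at a $1$-nucleus supplied by (S2), propagate through the connected network of $1$-crossable $p$-boxes in the (F1) circuits, and finish with Lemma~\ref{lem:0elimination} on the interior components allowed by (F2)--(F3); your time accounting ($T\ll qL$ for propagation plus $|S|\lesssim ((1/q)\log(1/q))^2$ for elimination) is also essentially the paper's. However, at the crucial step --- ruling out interference by $2$s along the route and on the boundaries of the interior components --- you set up a ``race'' between the $1$-front and growing $2$-clusters and explicitly leave it unresolved (``one has to argue a genuine race\dots''; ``must be stated carefully''). This is a genuine gap, and the race framing is also the wrong tool: $2$s perform the same threshold-$2$ growth on $1$s, so a front of $2$s, once growing, advances at a speed comparable to the front of $1$s, and a speed comparison near a single $p$-box would be delicate at best.

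The paper closes this step with a static confinement claim rather than a race: since a good $q$-box is surrounded by fillable $q$-boxes, condition (F3) (at most two initial $2$s in every $5\times 5$ square) holds throughout a neighborhood of width about $(1/q)\log(1/q)$, and combined with (S3) (which bounds, via the dominating dynamics with $0$s flipped to $1$s, the diameter of any $2$-cluster meeting this region by $1/q$) it follows that through time $L$ the only $2$s inside a good $q$-box are the initial $2$s and vertices adjacent to an initial $2$ --- sparse pairs of $2$s simply cannot nucleate sustained growth, and no larger cluster can invade from outside. Since a $1$-crossable $p$-box by definition contains no $2$s inside or on its external boundary, it is $2$-free through time $L$, which dwarfs your propagation budget; the same observation shows the $1$s on the boundary of each interior component $S$ remain $1$s for the $|S|$ steps Lemma~\ref{lem:0elimination} requires, disposing of your ``secondary nuisance'' as well. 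Without this confinement statement (or a worked-out substitute), your proof does not go through as written.
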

\begin{proof}

    

    First, observe that by (S3) and the fact that every good $q$-box is surrounded by fillable $q$-boxes, the only $2$s in each good $q$-box through time $L$ are those that were initially present or adjacent to an initial $2$ (by (F3)). 
    It follows that every $1$-crossable $p$-box in the circuit from (F1) of a good $q$-box contains no $2$s through time $L$. Note here that the reason we require no initial 2s on the external boundary of a $1$-crossable $p$-box is to prevent invasion of $2$s at the corners due to the $q$-box containing diagonally adjacent $2$s.

    Now the circuits of $1$-crossable $p$-boxes in the good $q$-boxes of a crossing of the big box form a connected set of $1$-crossable $p$-boxes.
    Therefore by (S2), they, together with all of the $1$-crossable $p$-boxes that are connected to them within the good $q$-boxes, are all filled by $1$s at time 
    $$
    T := (q^{4}L + 1)\cdot (4(1/q)\log(1/q))^2 \ll qL.
    $$
    These sites remain $1$s through time $L$ by our previous observation.
    
    Finally, consider the center box of one of the good $q$-boxes. 
    By (F2) and (F3), any connected set $S$ of sites that are not in the connected component of $1$-fillable $p$-boxes that are connected to the circuit of (F1) will satisfy the conditions of Lemma~\ref{lem:0elimination} with $T = (q^{4}L + 1)\cdot (4(1/q)\log(1/q))^2$. 
    Thus by Lemma~\ref{lem:0elimination}, all $0$s are eliminated from the center box by additional time $|S|\le (4(1/q)\log(1/q))^2$, and the total time $T + |S|$ is smaller than $qL$ for sufficiently small $q$. 
\end{proof}
Note that since Proposition \ref{prop:bigboxfilling} relies on Lemma \ref{lem:0elimination}, it only applies to the standard  variant, and not the modified variant.

\begin{prop}\label{prop:bigboxsuccessful}
    A big box is successful with high probability, provided that $q \ll \frac{p^2}{(\log(1/p))^2}$ and $\epsilon < e^{-4}/16$.
\end{prop}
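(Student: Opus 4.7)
The plan is to verify each of (S1)--(S3) separately, with arguments respectively analogous to Lemmas~\ref{lem:highway}, \ref{lem:find1-nucleus}, and \ref{lem:nointerventionof2s} from Section~\ref{sec:eliminate-0s}, where now good $q$-boxes play the role of $1$-crossable $p$-boxes, $1$-nuclei inside (F1) circuits play the role of $1$-nuclei inside $p$-boxes, and the scale $L$ replaces $M$.

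For (S1), I would observe that the short side of each sub-rectangle, measured in center boxes (of side $2\floor{(1/q)\log(1/q)}$), has length of order $e^{\eps/q}$. If a long-way crossing of good $q$-boxes fails, then there must be a dual path of non-good $q$-boxes of length at least $e^{\eps/q}/2$ between the two long sides. Since the good event depends only on a bounded ($5\times 5$) block of neighboring $q$-boxes (as noted in the proof of Proposition~\ref{prop:q-boxgood}), any such dual path contains a fixed positive fraction of independent steps, and by Proposition~\ref{prop:q-boxgood} the single-box failure probability $\eta=\eta(p,q)$ vanishes as $p\to 0$. The standard Peierls sum $\sum_{j\ge e^{\eps/q}/2} 8\cdot 7^{j-1}\eta^{j/c}$ combined with an $\cO(e^{\eps/q})$-size union bound over boundary starting positions is then doubly exponentially small in $1/q$.

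For (S2), I would condition on the good $q$-boxes, and hence on the crossings produced by (S1), and work one path at a time. Each good $q$-box's circuit from (F1) consists of order $(p/q)\log(1/q)/\log(1/p)$ disjoint $p$-boxes; by Lemma~\ref{lem:1-nucleus} each is a $1$-nucleus with probability at least $e^{-\pi^{2}/(3p)}$. Since both the good event and the presence of a $1$-nucleus inside any particular $p$-box are increasing in the initial $1$s, the FKG inequality guarantees that conditioning on good only helps. Summing over a stretch of $\floor{q^{4}L}$ consecutive good $q$-boxes on the path, the probability that none of their (F1) circuits contains a $1$-nucleus is bounded by
\[
\exp\!\Bigl(-c\, q^{2}p\,(\log(1/q))^{2}\,e^{\eps/q\,-\,\pi^{2}/(3p)}/\log(1/p)\Bigr)
\]
for a positive constant $c$. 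Under the hypothesis $q \ll p^{2}/(\log(1/p))^{2}$ one has $1/q\gg 1/p$, so the exponent $\eps/q$ dominates $\pi^{2}/(3p)$, and the bound is super-exponentially small in $1/q$, which easily absorbs a final union bound over the $\cO(e^{\eps/q})$ possible starting positions across the four crossings.

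For (S3), I would mirror the proof of Lemma~\ref{lem:nointerventionof2s} with $p$ replaced by $q$ and $M$ by $L$. By Lemma~\ref{lem:Aizenman-Lebowitz} applied at scale $j=\floor{1/q}$, any connected $2$-cluster of diameter exceeding $1/q$ forces a rectangle $R$ with $\operatorname{long}(R)\in [1/(2q),1/q]$ that is internally filled by $2$s, and each pair of adjacent parallel lines perpendicular to the long side of $R$ must then contain an initial $2$ inside $R$. The resulting expected count is, up to polynomial-in-$1/q$ factors, of order $e^{(2\eps+\log(1-e^{-4})/4)/q}$. Since $-\log(1-e^{-4})\ge e^{-4}$, the hypothesis $\eps<e^{-4}/16$ makes the exponent strictly negative, so the bound tends to $0$. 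The most delicate step is (S2), where one must balance the exponential gain $e^{\eps/q}$ from having many attempts against the exponential cost $e^{-\pi^{2}/(3p)}$ of producing a single $1$-nucleus, and the hypothesis $q\ll p^{2}/(\log(1/p))^{2}$ is precisely what makes the former win.
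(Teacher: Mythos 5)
Your proposal is correct and follows essentially the same route as the paper, which proves (S1)--(S3) separately in Lemmas~\ref{lem:S1}--\ref{lem:S3} --- a Peierls/duality argument over dual paths of non-good $q$-boxes (with the fractional exponent handling local dependence), a run-length estimate for $1$-nuclei along the crossings using positive correlation with the goodness event, and an Aizenman--Lebowitz (Lemma~\ref{lem:Aizenman-Lebowitz}) estimate at scale $j=\floor{1/q}$ yielding the condition $\epsilon<e^{-4}/16$ --- and then combines them by a union bound. The only differences are cosmetic constants (e.g.\ the paper bounds $L\le e^{2\epsilon/q}$ and counts one $p$-box per good $q$-box with a $j/25$ spacing, whereas you keep the polynomial factors and sum over all circuit $p$-boxes), which do not change the argument.
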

We prove this proposition using the next three lemmas.
\begin{lemma}\label{lem:S1}
    A big box satisfies (S1) with high probability, provided that $q \ll \frac{p^2}{(\log(1/p))^2}$.
\end{lemma}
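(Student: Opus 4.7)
The plan is to use a standard Peierls-type (dual path) argument, in the spirit of Lemmas~\ref{lem:highway} and~\ref{lem:F1F2}. First I would fix one of the four sub-rectangles, say the bottom horizontal one $R$ of dimensions $2L \times (L - 3\floor{q^{-2}})$. The plane is tiled by the center boxes of $q$-boxes, and I call two $q$-boxes adjacent when their center boxes share an edge. If $R$ contains no horizontal crossing by good $q$-boxes (via this adjacency), then by planar duality there must be a dual path of non-good $q$-boxes (allowing diagonal steps) crossing $R$ from top to bottom, with length at least
\[
j_0 := \left\lceil \frac{L - 3\floor{q^{-2}}}{2\floor{(1/q)\log(1/q)}}\right\rceil \gtrsim \frac{1}{3}\,e^{\epsilon/q}.
\]

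Next I would exploit finite-range dependence. Whether a $q$-box is good is determined by the initial configuration in that $q$-box and in the (at most $24$) $q$-boxes that overlap it, so the family of ``good'' events is $K$-dependent on the grid of $q$-box centers for some absolute constant $K$. Consequently, along any path of $j$ $q$-boxes one can extract at least $j/K$ mutually independent non-good events, and thus the probability that a prescribed path of length $j$ consists entirely of non-good $q$-boxes is at most $(1-\pgood)^{j/K}$, where $\pgood$ is the probability that a fixed $q$-box is good. By Proposition~\ref{prop:q-boxgood}, $\pgood \to 1$ as $p\to 0$ under the assumption $q\ll p^2/(\log(1/p))^2$. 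I would then fix $\delta > 0$ small enough that $7\,\delta^{1/K} < 1/2$ and note that $1-\pgood \le \delta$ for all sufficiently small $p$.

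Finally I would count: from each starting site on the top side of $R$ the number of dual paths of length $j$ is at most $8\cdot 7^{j-1}$, and the number of such starting positions (measured in center boxes along the long side of $R$) is $\cO(e^{\epsilon/q})$. Combining these,
\[
\cO\!\bigl(e^{\epsilon/q}\bigr) \cdot \sum_{j \ge j_0} 8\cdot 7^{j-1}\,\delta^{j/K}
= \cO\!\bigl(e^{\epsilon/q}\,(1/2)^{j_0}\bigr) \to 0
\]
as $p\to 0$, since $j_0 \gtrsim e^{\epsilon/q}/3$. A union bound over the four sub-rectangles completes the proof.

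The one slightly delicate step is pinning down the constant $K$ for the finite-range dependence. This requires unpacking conditions (F1)--(F4) and checking that ``fillable'' depends only on the initial configuration inside the $q$-box, plus (for (F1)) a bounded external boundary strip, since $1$-crossability of a boundary $p$-box involves its external neighbors. Once this bookkeeping is done, the argument proceeds mechanically, so the main obstacle is purely organizational rather than probabilistic.
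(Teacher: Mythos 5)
Your proposal is correct and follows essentially the same route as the paper: a Peierls/dual-path argument in which failure of a long-way crossing forces a dual path of non-good $q$-boxes of length of order $e^{\epsilon/q}$, whose expected number is killed by combining Proposition~\ref{prop:q-boxgood} with finite-range dependence (the paper makes your constant $K$ explicit as $25$, using $\delta^{j/25}$ with $\delta<7^{-25}$) and the $8\cdot 7^{j-1}$ path count. The only differences are bookkeeping choices (orientation of the crossing, spacing constant, leaving $K$ implicit), none of which affect the argument.
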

\begin{proof}
    Fix a big box.
    By symmetry and the union bound, it suffices to show that with high probability, there exists a path of good $q$-boxes within the $2L$ by $L - 3\floor{q^{-2}}$ sub-rectangle on the left half of this big box that travels from the top boundary to the bottom boundary.
    If such a path of good $q$-boxes from the top boundary to the bottom boundary does not exist, then within this sub-rectangle there must be a path of non-good $q$-boxes that travels from the left boundary to the right boundary.
    This path may take diagonal steps, and has length at least
    \[
    \frac{L - 3\floor{q^{-2}}}{4 \floor{(1/q)\log(1/q)}} \geq \frac{\floor{e^{\epsilon/q}}}{8}.
    \]
    Along the long side of the sub-rectangle, there are at most 
    \[
    \frac{2L}{\floor{(1/q)\log(1/q)}} = 2\floor{e^{\epsilon/q}}
    \]
    tiled half-overlapping $q$-boxes.
    By Proposition \ref{prop:q-boxgood}, we may choose an arbitrarily small constant $\delta > 0$, and the probability that a $q$-box is not good is at most $\delta$ for $p$ sufficiently small and $q \ll \frac{p^2}{(\log(1/p))^2}$.
    Choose $\delta < 7^{-25}$ so that $7\delta^{1/25} < 1$.
    It follows that the expected number of these paths is at most
    \[
    2\floor{e^{\epsilon/q}} \cdot \sum_{j \geq \floor{e^{\epsilon/q}}/8}8 \cdot 7^{j-1}\cdot \delta^{j/25} = \cO(e^{\epsilon/q}\cdot (7\delta^{1/25})^{e^{\epsilon/q}}) \to 0
    \]
    as $p \to 0$.
\end{proof}

\begin{lemma}\label{lem:S2}
    A big box satisfies (S2) with high probability, provided that $q \ll \frac{p^2}{(\log(1/p))^2}$.
\end{lemma}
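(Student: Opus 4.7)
The plan is to lift the argument of Lemma~\ref{lem:find1-nucleus} from the $p$-scale to the $q$-scale. First I condition on the four crossing paths of good $q$-boxes guaranteed by (S1), fix one such path $P$ and a starting good $q$-box $Q_0$ on $P$, and consider the next $N:=\floor{q^{4}L}$ good $q$-boxes $Q_0,Q_1,\dots,Q_{N-1}$ along $P$. The goal is to show that conditionally, with overwhelming probability, at least one $Q_i$ contains a $1$-nucleus among the $p$-boxes of a circuit as in (F1); then I union-bound over the $O(e^{\epsilon/q})$ starting positions on each of the four paths.

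For any good $q$-box $Q'$, choose deterministically (say, as the innermost) a circuit $\mathcal{C}$ of $1$-crossable $p$-boxes surrounding the center box of $Q'$ as guaranteed by (F1). Since $\mathcal{C}$ surrounds a box of perimeter $8\floor{(1/q)\log(1/q)}$ and each $p$-box has side $2\floor{(1/p)\log(1/p)}$, it contains at least
\[
\ell \;:=\; \floor{\tfrac{4\floor{(1/q)\log(1/q)}}{\floor{(1/p)\log(1/p)}}} \;\gtrsim\; \frac{p\log(1/q)}{q\log(1/p)} \;\gtrsim\; \frac{p}{q}
\]
$p$-boxes, where the last bound uses that $q\ll p^{2}/(\log(1/p))^{2}$ forces $\log(1/q)\gtrsim\log(1/p)$. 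Since being a $1$-nucleus depends only on the interior of a $p$-box and is positively correlated with being $1$-crossable (both are increasing in the initial set of $1$s), Lemma~\ref{lem:1-nucleus} gives that, conditional on $Q'$ being good and on $\mathcal{C}$ being the selected circuit, each $p$-box of $\mathcal{C}$ is a $1$-nucleus with conditional probability at least $e^{-\pi^{2}/(3p)}$. Because nucleus events on disjoint interiors are independent, restricting to a sub-collection of $\lceil \ell/10\rceil$ pairwise non-adjacent $p$-boxes in $\mathcal{C}$ yields
\[
\P\bigl(\text{no $1$-nucleus on }\mathcal{C}\,\big|\,Q'\text{ good}\bigr)\;\le\;\exp\!\bigl(-\ell\,e^{-\pi^{2}/(3p)}/10\bigr).
\]

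For the iteration along $P$, the event ``$Q_i$ is good and has no nucleus on its (F1) circuit'' is determined by the initial configuration in the $3\times 3$ block of $q$-boxes centered at $Q_i$; hence these events are independent along any sub-sequence of $Q_i$'s separated by at least $5$ on $P$. Keeping every sixth $Q_i$, the conditional probability that none of $Q_0,\dots,Q_{N-1}$ has a nucleus on its circuit is at most
\[
\exp\!\bigl(-c\,N\ell\,e^{-\pi^{2}/(3p)}\bigr)\;\le\;\exp\!\bigl(-c'\,p q^{2}\,e^{\epsilon/q}\log(1/q)\,e^{-\pi^{2}/(3p)}\bigr),
\]
for constants $c,c'>0$, using $N\gtrsim q^{3}e^{\epsilon/q}\log(1/q)$ and $\ell\gtrsim p/q$. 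Union-bounding over the $O(e^{\epsilon/q})$ starting positions on each of the four paths gives a total failure probability
\[
O(e^{\epsilon/q})\,\exp\!\bigl(-c'\,pq^{2}\,e^{\epsilon/q}\log(1/q)\,e^{-\pi^{2}/(3p)}\bigr)\;\longrightarrow\;0,
\]
because $q\ll p^{2}/(\log(1/p))^{2}$ forces $\epsilon/q\gg \pi^{2}/(3p)$, so the quantity $e^{\epsilon/q-\pi^{2}/(3p)}$ is doubly exponentially large in $1/q$ and easily beats the polynomial prefactors together with the $e^{\epsilon/q}$ from the union bound. The main subtlety I expect to need care with is the honest use of positive correlation in the presence of two sources of randomness in the conditioning: the random (F1) circuit $\mathcal{C}$ itself, and the $24$-neighbor coupling built into ``good''. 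The first is handled by selecting $\mathcal{C}$ as a monotone deterministic function of the configuration, and the second by the sparse sub-sequence trick that makes the events on distinct $Q_i$'s depend on disjoint regions.
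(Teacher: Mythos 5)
Your proposal is correct and follows essentially the same route as the paper: combine (S1) with the per-box nucleus bound of Lemma~\ref{lem:1-nucleus} via positive correlation with crossability/goodness, handle local dependence by thinning to well-separated $q$-boxes along the crossing, and union-bound over positions, with the convergence driven by $q\ll p^2/(\log(1/p))^2$ forcing $\epsilon/q\gg 1/p$ so that $e^{\epsilon/q}$ overwhelms $e^{-\pi^2/(3p)}$. The only differences are cosmetic: the paper uses a single fixed $1$-crossable $p$-box per good $q$-box (failure probability at most $1-e^{-4/p}$, with a $1/25$ dependence factor) rather than your full-circuit bound of $\ell\gtrsim p/q$ boxes, whose claimed conditional independence should properly be obtained by first conditioning on the $2$-configuration; also, $e^{\epsilon/q-\pi^2/(3p)}$ is singly (not doubly) exponential in $1/q$, though your final estimate still tends to $0$.
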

\begin{proof}   
    Fix a big box.
    Since $q \ll \frac{p^2}{(\log(1/p))^2}$, we may assume that (S1) holds by Lemma \ref{lem:S1}.
    Given the four crossings in (S1), if (S2) fails, then within one of these crossings there is a path consisting of $\floor{q^4L} + 1$ good $q$-boxes,  none of which contains a $1$-nucleus in its circuit of $1$-crossable $p$-boxes (as in (F1)).

     By Lemma \ref{lem:1-nucleus}, the probability that a good $q$-box does not contain any $1$-nucleus in its circuit is bounded above by the probability that a fixed one of its $1$-crossable $p$-boxes is 
    not a $1$-nucleus, which is at most $1-e^{-4/p}$. It follows that the expected number of paths of length $\floor{q^4L} + 1$ within the crossings described above  is at most
    a constant times
    \[
        L^2 \cdot \left(1-e^{-4/p}\right) ^{q^4L/25}
        \leq 
        L^2 \exp\left(-e^{-4/p}q^4L/25\right) 
        \leq
        L^2 \exp\left(-\sqrt{L}\right) \to 0
    \]
    as $p \to 0$.   
\end{proof}

\begin{lemma}\label{lem:S3}
    A big box satisfies (S3) with high probability as $q \to 0$ if $\epsilon < e^{-4}/16$.
\end{lemma}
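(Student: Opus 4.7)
The plan is to mirror the proof of Lemma~\ref{lem:nointerventionof2s} at the larger scale of the big box. First, I would enclose the region within $\ell^\infty$-distance $L$ of the big box inside a $5L\times 5L$ auxiliary square $\tilde B$, flip every initial $0$ in $\tilde B$ to a $1$, and then run pure threshold-$2$ bootstrap percolation of $2$s on $1$s internal to $\tilde B$. This augmented dynamics dominates the dynamics in (S3), so it suffices to show that no connected component of $2$s in $\tilde B$ in this augmented dynamics has diameter greater than $1/q$. A virtue of the augmentation is that every final connected component of $2$s is now a rectangle.

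Next, let $E_j$ be the event that some final component has longest side at least $j$. By Lemma~\ref{lem:Aizenman-Lebowitz}, if $E_j$ holds then there is an internally filled rectangle $R\subset \tilde B$ with $\operatorname{long}(R)\in[j/2,j]$. Such a rectangle contains an initial $2$ in every pair of adjacent lines perpendicular to its long side, and at least $j/4-1$ of these pairs can be chosen disjoint. A union bound over positions and shapes then gives, exactly as in Lemma~\ref{lem:nointerventionof2s},
\[
\P(E_j)\le 25\, L^2\, j^2\bigl(1-(1-q)^{2j}\bigr)^{j/4-1}\le 25\, L^2\, j^2\exp\bigl(-(j/4-1)e^{-4jq}\bigr),
\]
where I used $1-q\ge e^{-2q}$ and $(1-a)^k\le e^{-ak}$.

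Now I would take $j=\floor{1/q}$. For $q$ sufficiently small this yields $e^{-4jq}\ge e^{-4}$ and $j/4-1\ge 1/(8q)$, so the exponent is at most $-e^{-4}/(8q)$. Since $L^2\le e^{2\epsilon/q}\cdot q^{-2}(\log(1/q))^2$, the bound becomes
\[
\P(E_{\floor{1/q}})\le \mathrm{poly}(1/q)\cdot \exp\!\left(\frac{2\epsilon}{q}-\frac{e^{-4}}{8q}\right),
\]
which tends to $0$ provided $\epsilon<e^{-4}/16$. The absence of $E_{\floor{1/q}}$ is exactly the statement that every component of $2$s has diameter at most $1/q$, which completes the proof.

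The main obstacle, already present in Lemma~\ref{lem:nointerventionof2s}, is to balance the number of candidate rectangles (of order $L^2\cdot q^{-2}\approx e^{2\epsilon/q}$) against the probability each is internally filled. Unlike the situation in Lemma~\ref{lem:nointerventionof2s}, where the choice $j=\floor{(1/p)\log(1/p)}$ forces $jq\ll 1$ and makes the per-pair probability $1-(1-q)^{2j}$ vanish, here $jq\approx 1$ and the per-pair probability is bounded away from $0$. All decay must therefore be extracted from having $\sim 1/q$ disjoint pairs, and the explicit requirement $\epsilon<e^{-4}/16$ is precisely the threshold that makes this trade-off favorable.
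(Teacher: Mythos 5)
Your proposal is correct and follows essentially the same route as the paper: the $5L\times 5L$ auxiliary box with all $0$s flipped to $1$s, the Aizenman--Lebowitz lemma to find an internally filled rectangle with $\operatorname{long}(R)\in[j/2,j]$, the union bound over at most $25L^2j^2$ rectangles with $j/4-1$ disjoint pairs of perpendicular lines, and the choice $j=\floor{1/q}$ leading to the same threshold $\epsilon<e^{-4}/16$. The only differences are cosmetic (you make the domination by the augmented dynamics explicit and trade slightly different crude bounds on $L$ and $j/4-1$, landing on the identical condition).
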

\begin{proof}
    The proof is similar to Lemma \ref{lem:nointerventionof2s}.
    Fix a big box and consider the larger box with dimensions $5L$ by $5L$ containing the big box at its center.
    In the initial configuration, we flip all the state 0 vertices in the larger box to state 1, and consider the internal dynamics in the larger box.
    All the maximal connected components of $2$s in the resulting final configuration are rectangles. 
    For $j \in \N$, define $E_j$ to be the event that the final configuration contains a rectangle of state-$2$ vertices with the longest side length at least $j$.
    If $E_j$ occurs, then by Lemma \ref{lem:Aizenman-Lebowitz}, the large box of dimensions $5L$ by $5L$ contains a rectangle $R$ that is filled completely by $2$s in the final configuration, and $\operatorname{long}(R) \in [j/2, j]$.
    Consequently, every pair of neighboring lines that intersect $R$ and are perpendicular to the longest side of $R$ contains a state-$2$ vertex within $R$ in the initial configuration.
    Furthermore, two pairs of neighboring parallel lines satisfy this condition independently if they do not overlap.
    The number of such rectangles $R$ within the large box is at most $(5L)^2 j^2$.
    As $1 - q \geq e^{-2q}$ for all small enough $q$, we have 
    \[
    \P(E_j) \leq 25 L^2 j^2 (1 - (1 - q)^{2j})^{j/4 - 1} 
    \leq
    25 L^2 j^2(1 - e^{-4jq})^{j/4 - 1}
    \leq
    25 L^2 j^2\exp(-(j/4 - 1)e^{-4jq})
    .
    \]
    Take $j = \floor{1/q}$ and note that $L \leq e^{2\epsilon/q}$ for all sufficiently small $q$.
    The expression above vanishes as $q \to 0$ if $\epsilon < e^{-4}/16$.
\end{proof}
\begin{proof}[Proof of Proposition \ref{prop:bigboxsuccessful}]
    Combine Lemmas \ref{lem:S1}, \ref{lem:S2}, and \ref{lem:S3} and apply the union bound.
\end{proof}


For the proof of Theorem~\ref{thm:2swininstandard}, we will require the
$1$s to reach the origin with high probability by an appropriate time. 
As the 
origin is a center of a big box, we thus need following stronger property.

We call a big box $B$ \df{centrally successful} if it is successful and 
the following two conditions also hold:
\begin{enumerate}[leftmargin=1cm]
    \item[(CS1)] 
    it contains a path of good $q$-boxes connecting the $q$-box that includes the center of $B$ to the boundary of $B$; and
    
    \item[(CS2)] 
   from each good $q$-box $Q$ on this path, one can find a good $q$-box that contains a $1$-nucleus in its circuit as in (F1) within $\floor{q^4 L}$ $q$-boxes from $Q$ along the path.
\end{enumerate}

In particular, in a centrally successful box, the path from (CS2) intersects the crossings from (S1). 
The following two propositions are then proved in a similar manner as 
Propositions~\ref{prop:bigboxfilling} and~\ref{prop:bigboxsuccessful}.

\begin{prop}\label{prop:bigboxcenterfilling}
    If a big box is successful, then the center box of every good $q$-box in each of the four crossings from (S1) has all of its $0$s eliminated by time $qL$ for any sufficiently small $q$.
\end{prop}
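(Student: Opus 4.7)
The plan is to reproduce, essentially verbatim, the argument of Proposition~\ref{prop:bigboxfilling}: since the present statement is identical in wording, the same three-stage argument keyed to (S1)--(S3) and (F1)--(F3) applies. First, I would use (S3) together with (F3) to show that the circuit of $1$-crossable $p$-boxes around the center box of any good $q$-box $Q$ in one of the four (S1) crossings is never invaded by $2$s through time $L$. Condition (S3) bounds every connected component of initial $2$-growth near the big box to diameter at most $1/q$, and (F3) caps the local density of initial $2$s in any $5\times 5$ square at two; combined with the fact that every $q$-box neighboring $Q$ is fillable (part of being good), any $2$ reaching a circuit $p$-box through time $L$ would force an initial $2$ on the external boundary of a $1$-crossable $p$-box, contradicting its definition.

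Second, I would invoke (S2) to find, within $\floor{q^{4}L}$ $q$-boxes of $Q$ along its crossing, a good $q$-box whose circuit contains a $1$-nucleus. The circuits of all good $q$-boxes on that crossing form a connected region of $1$-crossable $p$-boxes; the nucleus floods this region at the standard bootstrap rate, so the entire circuit of $Q$ is in state $1$ by time
\[
T:=(q^{4}L+1)\cdot(4(1/q)\log(1/q))^{2}\ll qL,
\]
and by the first stage these sites persist as $1$s through time $L$. Finally, apply Lemma~\ref{lem:0elimination} to each connected component $S$ of sites in the center box of $Q$ that lies outside the connected component of $1$-crossable $p$-boxes joined to that circuit: by (F2) the diameter of $S$, measured in $p$-boxes, is at most $\log(1/q)$, hence $|S|\le(4(1/q)\log(1/q))^{2}$, and by (F3) the initial configuration on $S$ is frame-free and has at most two $2$s in any $5\times 5$ square. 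With $T$ as above, every $0$ in $S$ is eliminated by time $T+|S|\le qL$ for all sufficiently small $q$. Since $Q$ was an arbitrary good $q$-box in any of the four crossings, this gives the claim.

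The main obstacle is the coordination between (S3), (F2) and (F3) so that the hypotheses of Lemma~\ref{lem:0elimination} are in force for the relevant choice of $S$ and $T$: one needs the circuits around $Q$ to remain all in state $1$ throughout the window $[T,T+|S|]$ despite $2$s continuing to grow dynamically nearby, and one needs (F3) --- an initial-configuration condition --- to supply the frame-free and density hypotheses that the lemma asks for on the initial configuration of $S$. Once these are verified, every estimate is a direct re-use of those already performed in the proof of Proposition~\ref{prop:bigboxfilling}.
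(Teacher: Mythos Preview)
Your proposal is correct and follows precisely the approach the paper intends: the paper does not give a separate proof of Proposition~\ref{prop:bigboxcenterfilling} but simply states that it is ``proved in a similar manner as Propositions~\ref{prop:bigboxfilling} and~\ref{prop:bigboxsuccessful},'' and since the statement is verbatim that of Proposition~\ref{prop:bigboxfilling}, reproducing that argument is exactly right. Your three-stage outline (no $2$-invasion of circuits through time $L$; circuits fill with $1$s by time $T\ll qL$ via (S2); elimination of residual $0$s via Lemma~\ref{lem:0elimination}) matches the paper's proof of Proposition~\ref{prop:bigboxfilling} step for step.
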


\begin{prop}\label{prop:bigboxcentrallysuccessful}
    A big box is centrally successful with high probability, provided that $q \ll \frac{p^2}{(\log(1/p))^2}$ and $\epsilon$ is small enough.
\end{prop}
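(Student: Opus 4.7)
The plan is a union bound. Proposition~\ref{prop:bigboxsuccessful} already establishes (S1)--(S3) with high probability under the stated hypotheses, so it suffices to verify that (CS1) and (CS2) each hold with high probability and then combine.

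For (CS1), I would use the Peierls-style argument of Lemmas~\ref{lem:highway} and~\ref{lem:S1}. If no path of good $q$-boxes joins the central $q$-box to the boundary of the big box $B$, then by planar duality there is a circuit (allowing diagonal steps) of non-good $q$-boxes, contained in $B$, that encircles the central $q$-box. Since goodness of a $q$-box depends only on the initial configuration in a $5\times 5$ neighborhood of $q$-boxes around it, the $25$-dependence calculation from Lemma~\ref{lem:S1} bounds the probability that a prescribed circuit of length $j$ consists entirely of non-good $q$-boxes by $\delta^{j/25}$, where $\delta=\delta(p,q)$ is the non-goodness probability of a single $q$-box and $\delta\to 0$ by Proposition~\ref{prop:q-boxgood} as $p\to 0$. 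The number of length-$j$ circuits surrounding the center is $O(j\cdot 7^{j-1})$, so the expected number of bad circuits is at most $\sum_{j\ge 4} 8j\cdot 7^{j-1}\delta^{j/25}$. For $p$ small enough to ensure $7\delta^{1/25}<1$, this sum is dominated by its $j=4$ term, $O(\delta^{4/25})$, which tends to $0$.

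For (CS2), I would condition on (CS1) and fix the path from the central $q$-box to the boundary of $B$; its length along $q$-boxes is at most a constant multiple of $L/\floor{(1/q)\log(1/q)}=O(e^{\epsilon/q})$. Partition it into consecutive blocks of $\floor{q^4 L}$ good $q$-boxes. As in Lemma~\ref{lem:S2}, the probability that a good $q$-box contains no $1$-nucleus in its (F1) circuit is at most the probability that one fixed $p$-box of that circuit fails to be a $1$-nucleus, which by Lemma~\ref{lem:1-nucleus} is at most $1-e^{-\pi^2/(3p)}$. Invoking the same $25$-dependence, the probability that none of the $\floor{q^4 L}$ boxes in a block harbors a $1$-nucleus in its (F1) circuit is at most $(1-e^{-\pi^2/(3p)})^{\floor{q^4 L}/25}$. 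Since $q\ll p^2/(\log(1/p))^2$ forces $1/p=o(1/q)$, we have $e^{-\pi^2/(3p)}\cdot q^4 L\to\infty$, so the per-block failure probability decays super-polynomially in $1/q$; multiplying by the $O(e^{\epsilon/q})$ number of blocks along the path still gives a vanishing total.

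A union bound over Proposition~\ref{prop:bigboxsuccessful}, (CS1), and (CS2) then yields the claim, with $\epsilon$ inheriting the constraint $\epsilon<e^{-4}/16$ from Lemma~\ref{lem:S3}. I do not anticipate a serious technical obstacle: the main care needed is the consistent choice of $\delta$ small enough in (CS1), handled exactly as in Lemma~\ref{lem:S1}. In effect, the proof transports the circuit and $1$-nucleus estimates of Lemmas~\ref{lem:S1} and~\ref{lem:S2} from paths crossing the big box to a single path anchored at its center.
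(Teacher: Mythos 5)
Your proposal is correct and takes essentially the paper's route: the paper merely remarks that Proposition~\ref{prop:bigboxcentrallysuccessful} is proved in the same manner as Proposition~\ref{prop:bigboxsuccessful}, which is exactly what you do by transporting the Peierls circuit estimate of Lemmas~\ref{lem:highway} and~\ref{lem:S1} to (CS1) and the $1$-nucleus estimate of Lemma~\ref{lem:S2} to (CS2), then applying a union bound with (S1)--(S3). The only cosmetic omission is to note first, as in Lemma~\ref{lem:highway}, that the $q$-box containing the center is itself good with high probability (Proposition~\ref{prop:q-boxgood}) before invoking the duality/circuit argument.
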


\subsection{2-ignition}

So far, we provided a mechanism for elimination of $0$s 
in a big box. To finish the proof of 
Theorem~\ref{thm:2swininstandard}, we need to demonstrate that $2$s eventually prevail. A successful big box $B$ is \textbf{$2$-ignited} if the $2L$ by $2L$ square $B'$ at distance exactly $2\floor{(1/q)\log(1/q)}$ directly below $B$ initially contains no $0$s, every row of $B'$ contains a $2$, and the bottom row of $B'$ contains only $2$s. 
We cover the lattice by overlapping big boxes, where the big box $B_{ij} = [-L + 1,L]^2 + (iL,jL)$, and consider the configuration of successful big boxes in the upper half-plane, that is, for $(i,j)\in \mathbb{Z} \times \mathbb{Z}_{\ge 0}$. 
Consider $B_{00}$, the big box at the origin.
Recall that the plane is tiled 
by the center boxes of the $q$-boxes. 
We may assume that $B_{00}$ is exactly tiled by these center boxes, as $L$ was defined as an integer multiple of $\floor{(1/q)\log(1/q)}$ for this purpose.

\begin{lemma}\label{lem:uniqueinfinitecomponent}
    For sufficiently small $p$ and $q \ll p^2 / (\log (1/p))^2$ the successful big boxes form a unique infinite component (with nearest-neighbor adjacency) in the upper half plane with probability $1$. This component includes $B_{00}$ with high probability as $p\to 0$.
\end{lemma}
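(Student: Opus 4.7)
The plan is to treat the array $\{B_{ij}\}_{(i,j)\in \bZ \times \bZ_{\geq 0}}$ as a renormalized site percolation model, with $(i,j)$ declared \emph{open} if $B_{ij}$ is successful. The proof has two steps: first show that this is a finite-range dependent Bernoulli field with marginal probability approaching one, and then invoke standard half-plane percolation results.

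For the first step, I would verify that the successful events form a $k$-dependent field with $k$ independent of $p$ and $q$. Each of conditions (S1)--(S3) is determined by the initial configuration within $\ell^\infty$-radius $L$ of $B_{ij}$: (S3) explicitly so, while (S1)--(S2) concern the good $q$-boxes inside $B_{ij}$, where the goodness requirement adds only a $q$-box-width buffer (which is a negligible fraction of $L$). Since the centers of $B_{ij}$ and $B_{i'j'}$ are $L\max(|i-i'|,|j-j'|)$ apart in the $\ell^\infty$ norm, their defining regions are disjoint once $\max(|i-i'|,|j-j'|)$ exceeds an absolute constant, so the field is finite-range dependent. By Proposition~\ref{prop:bigboxsuccessful}, the marginal probability of success can be made at least $1-\delta$ for any prescribed $\delta > 0$ by taking $p$ small enough (with $q \ll p^2/(\log(1/p))^2$). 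The Liggett--Schonmann--Stacey domination theorem then yields stochastic domination by an independent Bernoulli site percolation on $\bZ \times \bZ_{\geq 0}$ with parameter $\pi(\delta) \to 1$ as $\delta \to 0$.

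For the second step, I would invoke standard half-plane site percolation at $\pi$ close to one. Existence of an infinite open cluster is immediate, and uniqueness for the dependent model follows from a Burton--Keane argument applied to the underlying i.i.d.~initial configuration, which has the finite-energy property and with respect to which the successful events are finite-range measurable. A Peierls contour count then shows that $B_{00}$ lies in this infinite cluster with probability tending to one: any finite cluster containing $B_{00}$ in the half-plane is separated from infinity by a closed $*$-path whose endpoints lie on the boundary row $j=0$ and which passes over $B_{00}$. Using finite-range dependence to extract an independent sub-path, the expected number of such paths of length $n$ is at most $n \cdot C^n (1-\pi)^{n/c}$ for constants $C,c$, which sums to $o(1)$ as $\pi \to 1$. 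The main obstacle is the half-plane geometry in this last step: one must carefully enumerate paths with endpoints on the boundary row $j=0$ rather than closed circuits, and reconcile uniqueness of the infinite cluster in the dependent model with that in its Bernoulli domination. Both points are standard but warrant care.
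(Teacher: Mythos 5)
Your overall skeleton matches the paper's: mark a big box open when it is successful, observe that openness is determined by the configuration within a bounded $\ell^\infty$-neighborhood so the field is finite-range dependent, use Proposition~\ref{prop:bigboxsuccessful} plus \cite{LiggettSchonmannStacey} to dominate a high-density Bernoulli field, and conclude the second claim (that $B_{00}$ lies in the infinite component with high probability) by a Peierls-type count as the density tends to $1$. All of that is how the paper argues, and those parts of your proposal are fine.

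The weak point is your uniqueness step. Invoking Burton--Keane ``applied to the underlying i.i.d.\ initial configuration'' does not work as stated: the theorem requires the finite-energy property of the percolation field whose clusters are being studied (the successful-box field), not of the i.i.d.\ field it is a factor of, and a $k$-dependent factor of i.i.d.\ need not have finite energy (flipping the status of one box while conditioning on all overlapping neighbors constrains the shared configuration). Moreover, Burton--Keane is formulated for $\bZ^d$-invariant measures on the full lattice, whereas here the field lives on the half-plane $\bZ\times\bZ_{\ge 0}$ and is invariant only under horizontal shifts; and, as you note yourself, uniqueness does not transfer from the Bernoulli minorant by stochastic domination alone, since adding open sites can in principle create new infinite clusters. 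The paper sidesteps all of this with a monotone argument: in supercritical independent site percolation there are a.s.\ infinitely many pairs $(-x,0)$, $(x,0)$ joined by open paths lying in the upper half-plane and disjoint for different $x$; under the domination coupling these paths remain open in the successful-box field, every infinite half-plane cluster of that field must meet all but finitely many of them (each such path, together with the boundary row, separates a finite region from infinity), and hence all infinite clusters coincide. Replacing your Burton--Keane step by this path argument would close the gap.
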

\begin{proof}
Note that in the initial configuration, if we mark successful vertices open and unsuccessful vertices closed, then vertices at $\ell^\infty$-distance at least $3$ are marked independently.  Recall that in the 
    standard supercritical independent site percolation on $\bZ^2$, there is a.s.~an infinite connected component contained entirely in the upper 
    half-plane, and also there are a.s.~infinitely many pairs of sites $(-x,0)$, $(x,0)$, $x>0$, that are connected through 
    open paths that are entirely contained in the upper half-plane and disjoint for different $x$.   Thus the first claim holds by Proposition \ref{prop:bigboxsuccessful} and \cite{LiggettSchonmannStacey}. The second claim follows as the probability that a vertex is open goes to 1 as $p\to 0$.
\end{proof}

\begin{lemma}
\label{lem:spreadtoorigin}
    As $p \to 0$ with $q \ll \frac{p^2}{(\log(1/p))^2}$, both of the following occur with high probability:
    \begin{enumerate}
        \item[(i)]
        the big box $B_{00}$ centered at the origin is in the infinite component of successful big boxes that contains a $2$-ignited big box; and

        \item[(ii)]
        the big box $B_{00}$ is centrally successful.   
    \end{enumerate}
\end{lemma}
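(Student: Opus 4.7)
Part (ii) follows directly from Proposition~\ref{prop:bigboxcentrallysuccessful} applied to $B_{00}$.

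For part (i), by Lemma~\ref{lem:uniqueinfinitecomponent} the big box $B_{00}$ lies in the unique infinite cluster $C_\infty$ of successful big boxes in the upper half-plane with high probability; the task therefore reduces to showing that $C_\infty$ contains a $2$-ignited big box with high probability. I will argue by ergodicity. First, I verify that for each pair $(i,j)$ the event ``$B_{ij}$ is successful and $2$-ignited'' has positive probability $\rho=\rho(p,q)>0$. The key geometric observation is that $L=\floor{e^{\epsilon/q}}\floor{(1/q)\log(1/q)}\gg \floor{(1/q)\log(1/q)}$, so the box $B'_{ij}$ placed $2\floor{(1/q)\log(1/q)}$ below $B_{ij}$ sticks out well below the $\ell^\infty$-distance $L$ neighborhood of $B_{ij}$ used in condition (S3). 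In particular, the densest feature demanded by $2$-ignition, namely the all-$2$ bottom row of $B'_{ij}$, lies strictly below this neighborhood and cannot contribute to the clusters controlled by (S3). The remaining ignition requirements (no $0$s, and at least one $2$ per row of $B'_{ij}$) impose only a sparse $2$-density inside the overlap of $B'_{ij}$ with the $L$-neighborhood, which is compatible with (S1)--(S3) with positive probability, yielding $\rho>0$.

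The event $V :=$ ``$C_\infty$ contains a $2$-ignited big box'' is invariant under the horizontal shift $(x,y)\mapsto(x+L,y)$ of the initial product configuration, which is ergodic, so $\P(V)\in\{0,1\}$. To verify $\P(V)>0$ I bound $\P(B_{0,1}\text{ is successful, $2$-ignited, and in }C_\infty)$ from below: the two local events have joint probability $\rho$, while, conditional on $B_{0,1}$ being successful, the supercritical percolation of successful big boxes elsewhere (Proposition~\ref{prop:bigboxsuccessful} combined with the Liggett--Schonmann--Stacey comparison used in the proof of Lemma~\ref{lem:uniqueinfinitecomponent}) places $B_{0,1}$ in $C_\infty$ with a uniformly positive probability. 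An FKG inequality or an explicit coupling handles the mild dependence between the local event at $B_{0,1}$ and the global infinite-cluster event. Hence $\P(V)=1$, and combining with Lemma~\ref{lem:uniqueinfinitecomponent} yields (i).

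The main technical obstacle is this last decoupling step: one must ensure that conditioning on the (very rare) event that $B_{0,1}$ is successful and $2$-ignited does not destroy the supercriticality of the successful-big-box percolation elsewhere in the upper half-plane, so that $B_{0,1}$ still connects to infinity through successful boxes with positive probability. The finite-range dependence of the ``successful'' indicator should make this a routine FKG or stochastic-domination argument, but the details must be treated carefully given the size of the $L$-neighborhood that enters (S3).
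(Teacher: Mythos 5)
Your overall route --- reduce (i) to showing the infinite cluster contains a successful, $2$-ignited box, establish positive probability of that joint event, and upgrade to probability one by translation invariance and ergodicity --- is exactly the paper's strategy, and part (ii) is handled identically via Proposition~\ref{prop:bigboxcentrallysuccessful}. The genuine gap is in the positive-probability step. You propose to decouple the ignition event from successfulness and connectivity by ``an FKG inequality or an explicit coupling,'' but FKG is not available: the relevant events are not monotone in any common ordering of the three states. Being $2$-ignited demands $2$s present and $0$s absent in $B'$, while successfulness of the nearby boxes demands $2$s absent from the $1$-crossable $p$-boxes of (F1) and forbids large $2$-clusters in the $L$-neighborhoods of (S3), which overlap $B'$. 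This non-monotonicity is precisely the difficulty the paper emphasizes, and it is resolved there not by a correlation inequality but by a local-modification argument: condition on $G_1=\{B_{00}\text{ is in the infinite cluster}\}$ and overwrite $B_{00}'$ with one explicit witness configuration (Figure~\ref{fig:ignition}) that is checked, constraint by constraint, to be compatible with every requirement it touches, giving $\P(G_2\mid G_1)>0$.

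Your claim that the ignition requirements ``impose only a sparse $2$-density \dots compatible with (S1)--(S3)'' skips the substance of that check. One $2$ per row, placed arbitrarily in a region with no $0$s, can bootstrap into a $2$-cluster of diameter far larger than $1/q$ inside the $L$-neighborhood and thus destroy (S3) for the very box you are trying to keep successful; the paper's configuration avoids this by a zig-zag placement in which consecutive $2$s on the same side are at $\ell^1$-distance $3$ (hence inert for threshold-$2$ growth), located in columns at distance about $\floor{q^{-2}}$ from the sides --- this is what the reserved buffer strips of width $3\floor{q^{-2}}$ in the definition of a successful box are for --- with the all-$2$ bottom row pushed below every relevant $L$-neighborhood. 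Relatedly, by igniting $B_{0,1}$ rather than a box in the bottom row $j=0$, your ignition square overlaps the interiors and $q$-box structure of the $j=0$ boxes, creating interference the paper deliberately avoids by restricting the cluster to the upper half-plane and placing the ignition region strictly below it. None of this is unfixable, but as written the central compatibility verification --- the actual content of the paper's proof --- is asserted rather than carried out, and the tool you name for it would not apply.
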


\begin{proof} Let $G_1$ be the event that $B_{00}$ is in the infinite component in the upper half-plane of successful big boxes. We know from Lemma~\ref{lem:uniqueinfinitecomponent} that 
$\P(G_1) > 0$. 

We will now show that, conditional on $G_1$, the 
probability that the big box $B_{00}$ is $2$-ignited is nonzero. 
To see this, let $G_2$ be the event that the initial configuration of $1$s and $2$s in the $2L$ by $2L$ square 
\[
B_{00}'=[-L+1,L]\times [-3L+2-2\floor{(1/q)\log(1/q)}, -L+1-2\floor{(1/q)\log(1/q)}]
\]
is as follows, illustrated in Figure~\ref{fig:ignition}: 
for each integer
$$j \in [-3L+2-2\floor{(1/q)\log(1/q)}, -L+1-2\floor{(1/q)\log(1/q)}],$$ 
there is a $2$ at location
\[
\begin{cases}
    (-L+1+\floor{1/q^2},j) & \text{if } j \equiv 0 \mod 4 \\
    (L- \floor{1/q^2},j)   & \text{if } j \equiv 1 \mod 4 \\
    (-L+2+\floor{1/q^2},j) & \text{if } j \equiv 2 \mod 4 \\
    (L- 1 - \floor{1/q^2},j)   & \text{if } j \equiv 3 \mod 4
\end{cases};
\]
the bottom strip $[-L+1,L]\times \{-3L+2-2\floor{(1/q)\log(1/q)}\}$ contains only $2$s; and all other sites are in state $1$. 
This configuration is not mutually exclusive to $B_{00}$ being in an infinite component of successful big boxes because the only property it needs to maintain is that the bootstrap percolation of $2$s within distance $L$ of the big box $B_{00}$ does not internally produce a component of $2$s with diameter larger than $1/q$. 
This is because the part of the configuration in $[-L+1,L]\times [-2L,-L+1-2\floor{(1/q)\log(1/q)}]$ is inert and the nearest $2$s to the left or right boundaries are more than distance $1/q^2$ away from the $2$s in the square, so growth of $2$s cannot enter from the sides.

Furthermore, with high probability the $2$ in the top row of $B_{00}'$
is at distance at least 3 from any $2$ in the final configuration of 
bootstrap percolation of $2$s within distance $L$ of $B_{00}$. 
In addition, the four columns hosting the $2$s in this square have horizontal distance at least $1/q^2$ to the good $q$-boxes (and the fillable $q$-boxes around them) in the crossings of $B_{00}$. The presence of additional $1$s in $B_{00}'$ does not interfere with the requirements (F1)--(F3) for fillable $q$-boxes.  

 
It follows that $P(G_2\mid G_1)>0$ and thus $\P(G_1\cap G_2) > 0$.

The event that there exists $i\in \bZ$ such that the big box at $B_{i0}$ is in the infinite component of successful big boxes and is $2$-ignited is horizontally translation invariant (under translations by $L$ and with respect to the product measure), and therefore occurs with probability $1$ by the Ergodic Theorem. This proves part (i). Part (ii) 
follows from Proposition~\ref{prop:bigboxcentrallysuccessful}.
\end{proof}

\begin{figure}[h!]
\label{fig:ignition}
\begin{center}
\centering
        \resizebox{0.75\linewidth}{!}{

\begin{tikzpicture}[scale=0.30]
    \foreach \row in {1,...,35} {
        \pgfmathtruncatemacro{\m}{mod(\row,4)}
        \foreach \col in {0,...,35} {
            \def\colmod{mgrey}
            \ifnum\m=0
                \ifnum\col=3 \def\colmod{black}\fi
            \fi
            \ifnum\m=2
                \ifnum\col=4 \def\colmod{black}\fi
            \fi
            \ifnum\m=1
                \ifnum\col=31 \def\colmod{black}\fi
            \fi
            \ifnum\m=3
                \ifnum\col=32 \def\colmod{black}\fi
            \fi
            \fill[\colmod] (\col,\row) circle (0.3);
        }
    }

    \foreach \col in {0,...,35} {
        \fill[black] (\col,0) circle (0.3);
    }

\draw[decorate,decoration={brace,amplitude=5pt,raise=-8pt}]
  (-0.25,36.5) -- (8.25,36.5) node[midway,above=-4pt]{$3\floor{q^{-2}}$};

\draw[decorate,decoration={brace,amplitude=5pt,raise=-8pt}]
  (26.75,36.5) -- (35.25,36.5) node[midway,above=-4pt]{$3\floor{q^{-2}}$};

\draw[decorate,decoration={brace,amplitude=5pt,raise=-4pt}]
  (-2,-0.25) -- (-2,35.25) node[midway,left=4pt]{$2L$};

\draw[decorate,decoration={brace,mirror,amplitude=5pt,raise=-8pt}]
  (-0.25,-2) -- (35.25,-2) node[midway,below=2pt]{$2L$};

\draw[dotted, thick] (8.5, -0.5) -- (8.5, 35.5);
\draw[dotted, thick] (26.5, -0.5) -- (26.5, 35.5);

\end{tikzpicture}  
        }
\end{center}
\caption{Illustration of the configuration in $B_{00}'$ leading to $2$-ignition of the big box $B_{00}$ above. Black dots are $2$s and gray dots are $1$s. Except in the very bottom row, the $1$s are placed in the previously reserved buffer zone so that they do not interfere with the requirements for the good $q$-boxes in the crossings from $B_{00}$ above.}
\end{figure}
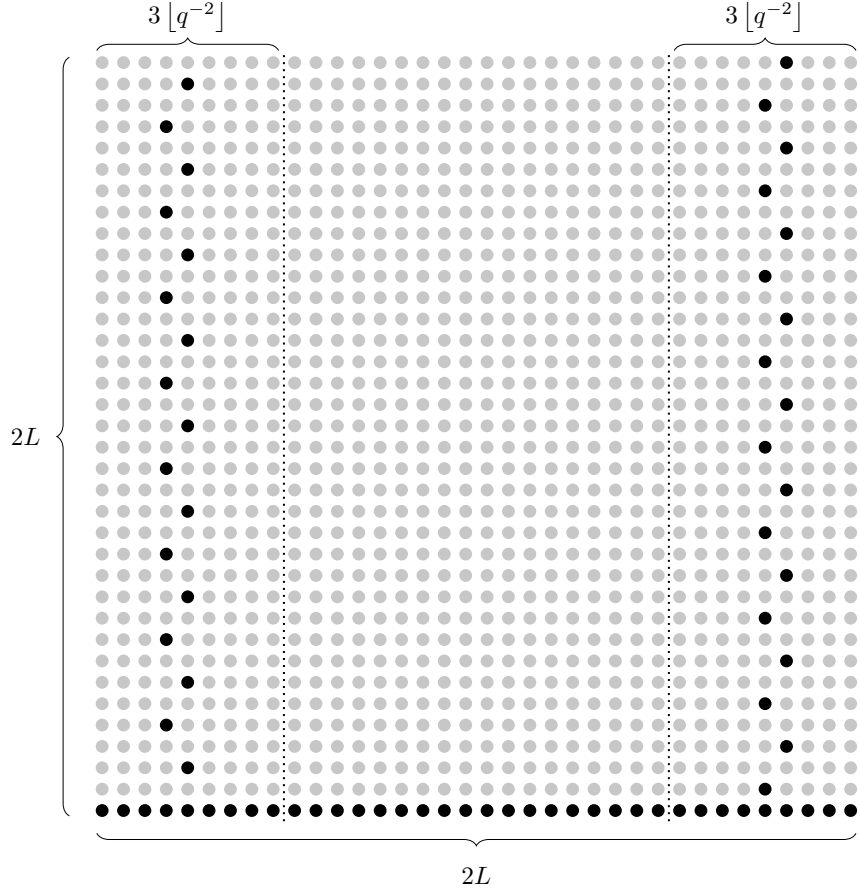

To ensure the spread of $2$s through the center boxes in the crossings, we 
will use following monotonicity lemma.

\begin{lemma}\label{lem:remove0s}
    Consider two dynamics with the same initial configurations of $1$s and $2$s on $\bZ^2$. 
    The first dynamics is the standard one on the full $\bZ^2$, and denote by $A_f\subset \bZ^2$ the set of vertices that are eventually occupied by $2$s in these dynamics. 
    For the second dynamics, we remove from $\bZ^2$ all the vertices that are initially $0$s, then run the standard dynamics on the induced subset of $\bZ^2$ and obtain the set $A_r$ of vertices that are eventually in state $2$. 
    Then $A_r\subset A_f$.
\end{lemma}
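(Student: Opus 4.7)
The plan is to couple the two dynamics on the same initial configuration and prove the inclusion $A_r\subset A_f$ by a straightforward induction on time. Let $V':=\{x\in\bZ^2:\xi_0(x)\ne 0\}$, denote the full-lattice configuration by $\xi_t$ and the restricted-lattice configuration by $\eta_t$, so that $\eta_0$ is the restriction of $\xi_0$ to $V'$.

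The inductive claim I would carry is: for every $t\ge 0$ and every $x\in V'$,
\begin{itemize}
    \item[(a)] if $\eta_t(x)=2$, then $\xi_t(x)=2$; and
    \item[(b)] if $\eta_t(x)\in\{1,2\}$, then $\xi_t(x)\in\{1,2\}$.
\end{itemize}
The base case $t=0$ is immediate from the definitions. For the inductive step, the essential observation is that the $V'$-neighborhood of any $x\in V'$ is a \emph{subset} of its $\bZ^2$-neighborhood. Consequently, if a transition $\eta_t(x)=1\mapsto \eta_{t+1}(x)=2$ occurs because two $V'$-neighbors $y_1,y_2$ of $x$ satisfy $\eta_t(y_i)=2$, then by part (a) applied at time $t$ we also have $\xi_t(y_i)=2$, so $N_2(x,t)\ge 2$ in the full dynamics; since $\xi_t(x)\in\{1,2\}$ by part (b), we conclude $\xi_{t+1}(x)=2$. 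The other transitions in $\eta$ are handled similarly, and the fact that $\xi$-states are nondecreasing in the $\{0,1,2\}$ order preserves (a) and (b) from one step to the next. Taking $t\to\infty$ gives $A_r\subset A_f$.

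What might seem to be an obstacle --- and is really the only conceptual point to check --- is the direction of monotonicity itself. One could worry that removing initial $0$s helps $2$s, because each removed $0$ is one fewer obstacle. The induction makes precise that this intuition is wrong for \emph{this} coupling: an initial $0$ in $\xi$ either remains a $0$ forever (in which case it is just as absent from the dynamics as it is in $\eta$) or eventually flips to a $1$ (and possibly to a $2$), in which case it can only \emph{assist} further transitions $1\to 2$. In either case $\xi$ dominates $\eta$ in the sense of (a)--(b), and no extra hypotheses are needed beyond the monotonicity of individual vertex states within each dynamics.
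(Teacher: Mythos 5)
Your proof is correct and follows essentially the same route as the paper: an induction on time showing that the set of $2$s in the restricted dynamics is contained in the set of $2$s in the full dynamics, using that every retained vertex starts in state $1$ or $2$ (hence is never $0$ in the full dynamics) and that a $2$-neighbor in the restricted graph is a $2$-neighbor in $\bZ^2$ by the induction hypothesis. Your auxiliary claim (b) is the same observation the paper makes when it notes that a non-removed vertex cannot be in state $0$ in the first dynamics, so the two arguments coincide.
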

\begin{proof}
We will prove the following stronger claim.
Let $A_f(t)$ and $A_r(t)$ be the sets of vertices in state $2$ at time $t$ for the two dynamics. 
Then $A_r(t)\subset A_f(t)$ for all $t\ge 0$. We prove this by 
induction on $t$. The case $t=0$ holds by definition. Assume now that $A_r(t-1)\subset A_f(t-1)$. Then $A_r(t-1)\subset A_f(t)$, so we only need 
to show that $A_r(t)\setminus A_r(t-1)\subset A_f(t)$. 
Take any vertex $x\in A_r(t)\setminus A_r(t-1)$.  Then, at time $t-1$, $x$ is in state $1$ in the second dynamics (with removed $0$ sites) and, by the induction hypothesis, has two neighbors in state 
$2$ in both dynamics. In particular, $x$ was not removed for the second dynamics and hence it cannot be in the state $0$ even at time $t=0$ in the first dynamics. Thus, in the first dynamics, $x$ is at time $t-1$ either in state 2 or in state 1, and in either case $x\in A_f(t)$.
\end{proof}

We now identify the mechanism that makes $2$s are eventually widespread.
\begin{lemma}\label{lem:spreadof2}  
    Fix a big box $B$. Assume that $B$ is successful and that it is connected by a path of successful big boxes to a $2$-ignited successful big box. Then $B$ has the property that the center box of every $q$-box in all four of the crossings of $B$ by good $q$-boxes  will eventually be completely occupied by $2$s. If, in addition, $B$ is centrally successful, then the center boxes in its path guaranteed by (CS1)--(CS2) is also completely occupied by $2$s.
\end{lemma}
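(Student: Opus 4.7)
My plan is to prove this by induction along the path of successful big boxes from the $2$-ignited box, which I will denote $\tilde B$, to the target box $B$, combined with a local argument for how $2$s spread through each successful big box's crossings.

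The first step is to show that the $2L \times 2L$ special square $\tilde B'$ directly below $\tilde B$ fills completely with $2$s. Since $\tilde B'$ initially has no $0$s, its bottom row is all $2$s, and every other row contains an initial $2$, a direct threshold-$2$ bootstrap argument fills $\tilde B'$ row by row from the bottom: each row's initial $2$ has a $2$-neighbor below (from the already filled row), so it spreads horizontally along the row. Next, $2$s must propagate from $\tilde B'$ across the gap into $\tilde B$. Despite the gap carrying a typical product-measure configuration (with $0$s that can block $2$ spread), the solid top row of $\tilde B'$ combined with initial $2$s in the gap and the monotonicity Lemma~\ref{lem:remove0s} suffice to push $2$s into $\tilde B$'s bottom crossing.

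The central local ingredient is: once $2$s reach a good $q$-box's center box from outside, that center box fills completely with $2$s. Indeed, by Proposition~\ref{prop:bigboxfilling}, all $0$s in the center boxes of good $q$-boxes in the crossings are eliminated by time $qL$, leaving only $1$s and $2$s. Combined with condition (F4) (every row and column of the center box contains an initial $2$), the invading $2$s from outside together with the initial $2$s span the center box via threshold-$2$ bootstrap. The $2$s then propagate from one center box to the next along a crossing via the shared $1$-crossable $p$-box circuits from (F1), which are already filled by $1$s (by the same $1$-nucleus spreading argument used in Proposition~\ref{prop:bigboxfilling}) and flip to $2$s as the $2$-spread passes through them.

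Once all four crossings of $\tilde B$ are filled with $2$s, I would spread them to adjacent successful big boxes in the path: since big boxes overlap substantially, adjacent successful big boxes share portions of each other's crossings and good $q$-boxes, and the same local mechanism initiates spreading in the neighboring box. By induction along the path, $2$s reach $B$ and fill its four crossings. For the centrally successful case, the central path guaranteed by (CS1) from $B$'s center to its boundary must cross at least one of the four crossing sub-rectangles; at the intersection, $2$s enter the central path's good $q$-boxes, and by the same local arguments (using (CS2) for $1$-nuclei along the path, then Lemma~\ref{lem:0elimination} for $0$-elimination, then (F4)), the center boxes along the central path fill with $2$s. The main obstacle is the second step above: propagating $2$s across the gap between $\tilde B'$ and $\tilde B$, since the gap lacks the special structural guarantees of good $q$-boxes and contains a typical density of $0$s that can block $2$ spread. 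Handling it requires leveraging the placement of initial $2$s in $\tilde B$'s buffer-zone columns (which are at horizontal distance at least $\floor{1/q^2}$ from the crossings) together with monotonicity to route $2$s past the $0$-obstacles into $\tilde B$'s crossings.
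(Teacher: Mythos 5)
Your overall skeleton matches the paper's: eliminate $0$s in the crossing center boxes via Proposition~\ref{prop:bigboxfilling}, pass to the $0$-free dynamics via Lemma~\ref{lem:remove0s}, fill $B'$ row by row, transmit $2$s along crossings and from one overlapping successful big box to the next, and handle the centrally successful case through the intersection of the (CS1) path with the crossings. However, your local propagation mechanism is not right as stated. Neither ``once $2$s reach a good $q$-box's center box from outside, it fills completely'' nor ``the $2$s propagate via the shared $1$-crossable $p$-box circuits from (F1)'' is justified: condition (F4) does not make a center box spanned by an arbitrary incursion of $2$s, and a thin circuit of sites in state $1$ adjacent to a block of $2$s need not ever turn into $2$s (indeed, $1$-crossable $p$-boxes are chosen to have no $2$s in or around them, so there is no reason the circuit itself gets occupied). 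The mechanism the paper relies on is simpler: consecutive $q$-boxes in a crossing are nearest-neighbor adjacent, so their center boxes share a full side; after time $L$ these center boxes contain no $0$s (Proposition~\ref{prop:bigboxfilling} together with Lemma~\ref{lem:remove0s}) and every row and column of each contains a $2$ by (F4), so a center box sharing a full side with an already completely occupied one fills row by row (or column by column). The (F1) circuits serve only to eliminate $0$s via Lemma~\ref{lem:0elimination}; they are not a conduit for $2$s.

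More seriously, you explicitly leave the ignition step unresolved --- getting the $2$s from $B'$ into the crossings of the $2$-ignited box --- and the fix you gesture at misreads the construction: the zigzag $2$s at horizontal distance about $q^{-2}$ from the sides are part of the prescribed configuration inside $B'$, placed in the buffer strips precisely so that they are inert and do not interfere with the good $q$-boxes of the box above; they are not a device for routing $2$s past $0$s in the gap. In the paper this step is carried out entirely within the reduced dynamics obtained by deleting, at time $L$, all sites still in state $0$: the square $B'$ (which contains no $0$s) fills with $2$s, the $2$s then occupy the center boxes of the vertical crossings just above the ignition region, and from there (F4) plus the $0$-freeness of the crossing center boxes drives the spread as above; there is no attempt to ``route past $0$-obstacles'' in the generic territory. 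Since you acknowledge you cannot complete the propagation from $B'$ into the crossings, the proposal as written does not prove the lemma.
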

\begin{proof}
Proposition \ref{prop:bigboxfilling} guarantees that all the $0$s in the center boxes of the good $q$-boxes in the crossings of the successful big boxes are eliminated by time $L$. Start with the configuration at time $L$ and remove all the sites that are at state $0$ at this time. 
    The resulting dynamics is then a 
    bootstrap percolation of $2$s on $1$s 
    on a subset of $\bZ^2$, with no $0$s. By Lemma \ref{lem:remove0s}, 
    it is enough to demonstrate the claim for these dynamics.

    The occupation by $2$s is initiated 
    from the $2$-ignition region, which is eventually 
    entirely occupied by $2$s. Then, the $2$s occupy the center boxes of the good $q$-boxes in the vertical crossings of the $2$-ignited successful big box just above the ignition region. The same now 
    happens for the horizontal crossings. From now on, the connectivity 
    of big boxes transmits occupation of $2$s to the crossings of one 
    big box to the neighboring one. Eventually, this occupation will 
    reach the successful box $B$, and if it is also centrally 
    successful, the center boxes in the crossings of the $q$-boxes in its path will also be completely occupied by $2$s.   
\end{proof}

\begin{proof}[Proof of Theorem \ref{thm:2swininstandard}]
By Lemma~\ref{lem:spreadtoorigin} (ii),
$B_{00}$ is centrally successful with high probability. Then 
Lemma~\ref{lem:spreadof2} and Lemma~\ref{lem:spreadtoorigin} (i) imply that, 
with high probability, the center boxes in the crossings of $B_{00}$ will be completely filled by $2$s, and so will the origin.
\end{proof}

\section{Preventing the spread of $2$s in the modified variant}
\label{sec:preventspreadof2s}

    
    




This section is dedicated to showing that in the modified variant, the origin will never become a $2$ with high probability, as $p$ and $q$ approach $0$ with \textit{any} relative rate.
We will prove the following theorem which is a stronger version of Theorem \ref{thm:2sloseinmodified}.

\begin{theorem}\label{thm:2sloseinmodified-stronger}
    Consider the modified variant of the two-stage bootstrap percolation on $\Z^2$.
    When $p$ and $q$ are both sufficiently small, for some constant $c > 0$, the following holds with probability at least $1 - cq^{41}$.
    In the final configuration of the dynamics, the origin is either not in state $2$, or is contained in a maximal connected set of state-$2$ vertices that has $\ell^\infty$-diameter at most $750$.
\end{theorem}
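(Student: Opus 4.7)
The plan is to construct, with probability at least $1-cq^{41}$, a ``protective shell'' of permanent $0$s enclosing the origin, of $\ell^\infty$-diameter at most $750$, through which the growth of $2$s cannot penetrate. The whole strategy rests on the special feature of the modified variant, noted in the introduction: a $0$ lying strictly between two $2$s in a horizontal row of $0$s (a ``$20\cdots02$'' pattern) can never become a $1$, since its two horizontal neighbors remain in $\{0,2\}$ at every time step, so rule (D0) of the modified variant can never fire on it. The analogous statement holds vertically.

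\textbf{Step 1.} First I would formalize the trapping claim by induction on the time $t$: if every interior $0$ in a horizontal or vertical ``$20\cdots02$'' segment is still in state $0$ at time $t$, then so is it at time $t+1$. Define $\cO$ to be the set of sites that are interior to some such horizontal or vertical segment of length at most a large fixed constant $k$. Its elements are permanent $0$s, $\cO$ is a $k$-dependent random field, and by choosing $k$ large and $p+q$ small, its density can be pushed above any prescribed multiple of $q^2$ --- which, crucially, takes us well into the supercritical regime for polluted bootstrap percolation, where the $2$s at density $q$ play the role of the occupied sites.

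\textbf{Step 2.} Following the rescaling scheme of \cite{GS2020}, I would tile $\bZ^2$ into blocks of fixed large side length and declare a block $B$ \textbf{good} when two conditions hold: (i) the obstacles $\cO\cap B$ form a dense enough, properly aligned pattern that the standard threshold-$2$ growth of $2$s on $1$s cannot cross $B$ from one side to the opposite side (a purely deterministic property of the block's initial configuration), and (ii) apart from the $2$s forming the trapping segments used in (i), no $2$ appears in $B$ or in a constant-thickness collar around $B$. Condition (ii) eliminates stray $2$s that might nucleate new $2$-clusters near the shell. A union bound over the finitely many obstructions gives $\P(B \text{ good}) \geq 1 - c q^N$ for $N$ large (depending on $k$ and the block size). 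Since the good-block event depends only on a bounded region of the initial configuration, the good-block process is finite-range-dependent, and by the Liggett–Schonmann–Stacey stochastic domination it dominates supercritical independent Bernoulli site percolation on the rescaled lattice.

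\textbf{Step 3.} I would then invoke the duality and Lipschitz-random-surface construction of \cite{LipPerc, GrimmettHolroyd10, GrimmettHolroyd12} as implemented in \cite{GS2020} to find a closed surface of good blocks surrounding the origin with probability at least $1-cq^{41}$ and with enclosed $\ell^\infty$-diameter at most $750$; the exponent $41$ comes from the Peierls-type count on minimal dual contours. Confinement then closes the proof: (i) guarantees that $2$s outside the shell cannot invade across it, and (ii) guarantees that no $2$ close to the shell can nucleate a new cluster that grows outward. Hence any $2$-cluster meeting the origin lies entirely inside the shell, giving the claimed diameter bound.

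\textbf{The hard part} is the non-monotonicity. Condition (i) \emph{requires} certain $2$s (those forming the trapping segments) while condition (ii) \emph{forbids} $2$s everywhere else in the block and its collar. These opposing demands must be reconciled simultaneously in the definition of a good block, and without losing too much probability. This tension is exactly what makes the Lipschitz-random-surface duality of \cite{GS2020} necessary: a plain Peierls argument applied directly to the bad region would not be robust to the interleaved presence-and-absence requirements on initial $2$s.
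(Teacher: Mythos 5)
Your Step 1 (permanent $0$s from ``$20\cdots02$'' segments, with density boosted to an arbitrary constant times $q^2$ by allowing length up to $k$) is exactly the paper's starting point, and the general toolkit you name (LSS domination plus the Lipschitz-shell construction of \cite{GS2020}) is the right one. But the quantitative architecture of Steps 2--3 does not work, because of a confusion of scales. A protective enclosure of $\ell^\infty$-diameter at most $750$ built out of blocking configurations cannot exist with probability $1-O(q^{41})$: each blocking segment costs order $q^2$, so any constant-size barrier around the origin has probability that is polynomially \emph{small} in $q$, not polynomially close to $1$. The same problem kills your good-block definition at any single scale: with blocks of fixed side length, condition (i) (a trapping pattern inside) has probability $O(q^2)$, so $\P(B \text{ good})\ge 1-cq^N$ is false; with blocks of side comparable to $q^{-1}$ (which is what is needed for (i) to hold with probability close to $1$), condition (ii) (no other $2$s in the block and a collar) has probability about $(1-q)^{q^{-2}}=e^{-\Theta(1/q)}\to 0$. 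The paper threads this needle differently: absence of $2$s is required only on a cross of width $O(m)$ and arm length $O(q^{-1})$ around each ``supportive'' $202$-site, which has probability bounded below by a constant; the shell is built at the rescaled $q^{-1}$-box level with per-annulus success probability only $1/2$ plus polynomially unlikely ``fortresses'' on the axes, and the construction is retried over $\sim q^{-19}$ disjoint annuli out to radius $\sim q^{-40}$, so the protected region exists except with probability $\exp(-1/(4q^2))$ --- and its diameter is polynomial in $1/q$, not $750$.

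Consequently the constant $750$ and the exponent $41$ cannot come from a Peierls count on the shell, and your argument is missing the step that actually produces them. Even granting a shell that $2$s cannot cross, you must still control the $2$s that nucleate and grow \emph{inside} it; since the region has diameter $\mathrm{poly}(1/q)$, ``the component lies inside the shell'' gives no useful diameter bound. The paper handles this with two ingredients you do not have: a comparison proposition exploiting the boundary conditions of the protected region (every boundary site with two outside neighbors sees a blocking $0$, and no initial $2$s within $\ell^\infty$-distance $m$ of the boundary), which shows that the true dynamics inside the region is dominated by the internal dynamics in which all interior $0$s are flipped to $1$s --- this is where the non-monotonicity is genuinely resolved, not just flagged; and an Aizenman--Lebowitz estimate showing that this internal (worst-case) dynamics in a box of side $\asymp q^{-41}$ produces no $2$-component of $\ell^\infty$-diameter larger than $16\cdot 41=656\le 750$, except with probability $O(q^{41})$. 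That estimate is the sole source of both the $750$ and the $1-cq^{41}$ in the statement. Without these two steps your proof cannot be completed along the lines proposed.
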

\begin{proof}[Proof of Theorem \ref{thm:2sloseinmodified} assuming Theorem \ref{thm:2sloseinmodified-stronger}]
    Let $\texttt{Box} := \{x \in \bZ^2 : \|x\|_\infty \leq 750\}$, and $\texttt{Comp}$ be the maximal connected set of state-$2$ vertices that contains the origin in the final configuration (if the origin is never in state $2$, set $\texttt{Comp} = \emptyset$).
    
    We claim that if the origin is eventually in state $2$ and $\texttt{Comp} \subset \texttt{Box}$, 
    then in the initial configuration, $\texttt{Box}$ contains at least one state-$2$ vertex.
    Indeed, if initially there is no $2$ in $\texttt{Box}$, then since the origin is eventually in state $2$, the spread of $2$s to the origin must come from outside $\texttt{Box}$, which results in a connected component of $2$s that joins the origin with the exterior of $\texttt{Box}$, violating the assumption $\texttt{Comp} \subset \texttt{Box}$.

    Now by Theorem \ref{thm:2sloseinmodified-stronger}, it suffices to show that
    \[
    \P(\text{the origin is eventually a 2}, \  \texttt{Comp} \subset \texttt{Box}) \to 0 
    \]
    as $q \to 0$.
    By the claim above, this probability is at most
    \[
    \P(\texttt{Box} \  \text{contains at least one $2$ initially}) \\
    =
    1 - (1 - q)^{|\texttt{Box}|}
     \to 0
    \]
    as $q \to 0$.
\end{proof}

Observe that the above proof actually establishes a stronger statement than Theorem~\ref{thm:2sloseinmodified}: there exists a $p_0>0$ so that 
$$
\lim_{q\to 0}\inf_{p\le p_0}
\P(\text{the origin is never in state $2$})=1.
$$

\subsection{Defining a protected region}

Consider the scenario where two non-neighboring vertices $u, v \in \bZ^2$ in the same row are both in state $2$, and all vertices between $u, v$ in this row are all in state $0$.
We call these state-0 vertices between $u, v$ \textbf{blocking 0}s.
Note that since 1s perform modified threshold-2 bootstrap percolation on 0s, all the blocking 0s will remain in state 0 forever.
This phenomenon still holds if these ``$20\cdots 02$" vertices are in the same column, but for convenience, our definition requires them to be on the same row only.  

Given a nonempty proper subset $A \subset \bZ^2$ and a vertex $u \in \bZ^2$, denote by $\texttt{Nbrs}(u, A)$ the number of the four neighbors of $u$ that are contained in the set $A$.
Given a fixed integer $m \geq 1$ and a configuration $\xi_0 \in \{0,1,2\}^{\bZ^2}$, consider a finite nonempty set $Z \subset \bZ^2$ such that
\begin{enumerate}[leftmargin=1.25cm]
    \item[(PR1)]
    if a vertex $u \in Z$ has $\texttt{Nbrs}(u, Z^c) \geq 2$, then $\texttt{Nbrs}(u, Z^c) - 1$ of these neighbors in $Z^c$ are blocking 0s;
    
    \item[(PR2)]
    for each $u \in Z$ with $\texttt{Nbrs}(u, Z^c) \geq 1$, every vertex $v \in Z$ within $\ell^\infty$-distance $m$ of $u$ has $\xi_0(v) \leq 1$.
\end{enumerate}
Given such a set $Z$ and initial configuration $\xi_0$,
define the internal bootstrap percolation dynamics 
$(\xi_t^Z)_{t \geq 0}$ on $Z$ with initial configuration $\xi_0^Z$ as follows:
\begin{itemize}
\item for $x \in Z$, if $\xi_0(x) = 0$, then $\xi_0^Z(x) = 1$; otherwise $\xi_0^Z(x) = \xi_0(x)$; and then
    \item $\xi_t^Z$  is internal dynamics on $Z$ using this altered initial configuration.
\end{itemize}
The last condition on $Z$ is 
\begin{itemize}[leftmargin=1.25cm]
    \item[(PR3)]
    the final configuration in the dynamics $(\xi_t^Z)_{t \geq 0}$ has no connected set of vertices in state $2$ in $Z$ with $\ell^\infty$-diameter larger than $m/2$.
\end{itemize}

\begin{prop}
\label{prop:restriction}
    Fix an integer $m \geq 1$, a configuration $\xi_0 \in \{0,1,2\}^{\bZ^2}$, and a finite nonempty set $Z \subset \bZ^2$.
    Run two modified bootstrap percolation dynamics:  the first is $\xi_t$, and the second $\xi_t^Z$. 
    Suppose that $Z$ and $\xi_0$ satisfy conditions (PR1)--(PR3).
    Then for every $t \geq 0$ and every $u \in Z$, if $\xi_t(u) = 2$, then $\xi_t^Z(u) = 2$.

\end{prop}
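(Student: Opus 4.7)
The plan is to prove the proposition by strong induction on $t$, strengthening the statement with an auxiliary claim. Let $\partial Z := \{u \in Z : \texttt{Nbrs}(u, Z^c) \ge 1\}$. I would simultaneously verify, at each time $t$, both of the following: (I) for every $u \in Z$, if $\xi_t(u) = 2$ then $\xi_t^Z(u) = 2$; and (II) for every $u \in \partial Z$, $\xi_t(u) \ne 2$. The base cases are immediate: (I) at $t=0$ holds because $\xi_0^Z$ agrees with $\xi_0$ on state-$2$ vertices, and (II) at $t = 0$ follows from (PR2) applied with $v = u$, which lies within $\ell^\infty$-distance $0 \le m$ of itself.

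For the inductive step, I first establish (II) at time $t+1$. Suppose for contradiction that $u \in \partial Z$ has $\xi_{t+1}(u) = 2$. Since $\xi_t(u) \ne 2$ by IH-(II), $u$ must flip from $1$ to $2$ and hence has at least two state-$2$ neighbors at time $t$. By (PR1), all but at most one of $u$'s neighbors in $Z^c$ are blocking $0$s (which remain in state $0$ forever), so at most one state-$2$ neighbor of $u$ lies in $Z^c$, and at least one such neighbor, call it $v$, lies in $Z$. By IH-(II), $v \notin \partial Z$; by IH-(I), $\xi_t^Z(v) = 2$. Since every connected $2$-component in the internal dynamics must contain at least one initial $2$ (by tracing backwards the causal history of $2$-flips, and using the monotonicity of the $2$-set in $t$), the component of $v$ in $\xi_t^Z$ contains some initial $2$, $w \in Z$, and (PR3) applied to the final configuration forces $\|w - v\|_\infty \le m/2$. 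Combining with $\|v - u\|_\infty = 1$ gives $\|w - u\|_\infty \le m$, contradicting (PR2) applied at $u$ (with the integer case $m=1$ needing only the trivial diameter-$0$ consequence of (PR3)).

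With (II) at time $t+1$ in hand, the inductive step for (I) is short. Suppose $u \in Z$ satisfies $\xi_{t+1}(u) = 2$; the case $\xi_t(u) = 2$ is covered by IH-(I), so assume $u$ flips from $1$ to $2$ at time $t+1$. By (II) at $t+1$, $u \notin \partial Z$, so all four neighbors of $u$ lie in $Z$; by IH-(I), each of the at least two state-$2$ neighbors of $u$ in the full dynamics is also in state $2$ under $\xi_t^Z$; and since $\xi_t^Z(u) \in \{1,2\}$ (the initial flip excludes state $0$ from $Z$ forever), $u$ flips to state $2$ in the internal dynamics as well. The main subtlety—and the step where I expect the delicate bookkeeping—is that a boundary vertex could, a priori, acquire a second state-$2$ neighbor from $Z^c$ in the full dynamics that the internal dynamics has no access to; hypotheses (PR1)–(PR3) are designed precisely to rule this out, by forcing every vertex of $\partial Z$ to remain in states $\{0,1\}$ throughout both evolutions.
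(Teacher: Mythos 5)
Your proof is correct and follows essentially the same route as the paper's: the paper takes the first time of disagreement between the two dynamics and shows the disagreeing vertex would need two state-$2$ neighbors in $Z^c$, contradicting (PR1) via the permanence of blocking $0$s, while (PR2), (PR3) and monotonicity of the internal $2$-set rule out state-$2$ vertices of $Z$ adjacent to the boundary---exactly the content of your auxiliary invariant (II). Your restructuring as a strong induction with the strengthened boundary invariant, rather than a minimal-counterexample argument, is only a cosmetic difference; all three hypotheses are used in the same roles.
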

\begin{proof}

Suppose for contradiction that the statement is false.
Let $\tau$ be the first time at which there is a vertex $x \in Z$ such that $\xi_\tau(x) = 2$ and $\xi_\tau^Z(x) = 1$.
Then, by the setup, $\tau > 0$.
At time $\tau - 1$, $x$ has at least two neighbors in state $2$ in the first dynamics but at most one neighbor in state $2$ in the second dynamics. 
By minimality of $\tau$, this disagreement implies $\texttt{Nbrs}(x, Z^c) \geq 1$.
Meanwhile, by (PR2), (PR3), and minimality of $\tau$, any vertex $y \in Z$ with $\texttt{Nbrs}(y, Z^c) \geq 1$ has no neighbors in $Z$ with state $2$ in either dynamics at time $\tau - 1$.
As $x$ has at least two neighbors in state $2$ in $\xi_{\tau - 1}$, all of these state $2$ neighbors of $x$ must be in $Z^c$, and therefore $\texttt{Nbrs}(x, Z^c) \geq 2$.
But by (PR1), since the blocking $0$s remain in state $0$ forever in the first dynamics, $x$ has at most one neighbor in state $2$ (which must be in $Z^c$) in $\xi_{\tau - 1}$, a contradiction.
\end{proof}

Given an initial configuration $\xi_0$, call a set $Z$ a {\bf protected region} if (PR1)--(PR3) all hold.
We will demonstrate that such a suitable $Z$ exists with high probability with an explicit construction.
The constructed set will be within a square of size polynomial in $q^{-1}$.
The next lemma shows that for a set $Z$ of such size, the final configuration of $\xi_t^Z$ does not have any connected component of state-2 vertices of large $\ell^\infty$ diameter in $Z$ (that is, (PR3) holds) with high probability.

\begin{lemma}
    \label{lem:PR3}
    Fix an integer $s \geq 1$ and consider the box $A = [-\lfloor q^{-s} \rfloor, \lfloor q^{-s} \rfloor]^2$.
    Initially, each vertex $x \in A$ is in state $2$ with probability $q$ and is in state $1$ otherwise.
    Let the state-$2$ vertices perform the internal bootstrap percolation on state-$1$ vertices in $A$.
    Then in the final configuration of these dynamics, all the maximal connected sets of state-$2$ vertices have $\ell^\infty$-diameter at most $16s$ with probability at least $1 - c_0q^s$, where $c_0 = c_0(s)$ is a constant.
\end{lemma}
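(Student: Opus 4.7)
The plan is to reduce the claim to a direct application of the Aizenman--Lebowitz lemma (Lemma~\ref{lem:Aizenman-Lebowitz}). The internal dynamics on $A$ is the standard two-dimensional threshold-$2$ bootstrap percolation of 2s on 1s, for which a classical structural fact applies: each maximal connected component of state-2 vertices in the final configuration is a rectangle. Indeed, any non-rectangular closed subset of $\bZ^2$ has an inner corner with at least two neighbors in the set, contradicting closedness under the update. Consequently, if some maximal connected component of state-2 vertices in the final configuration has $\ell^\infty$-diameter strictly greater than $16s$, there exists a rectangle $S \subset A$ completely filled by 2s in the final configuration with $\operatorname{long}(S) \geq 16s+1$.

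Applying Lemma~\ref{lem:Aizenman-Lebowitz} to $S$ with $j = 16s$ produces a rectangle $R \subset A$ internally spanned by 2s and satisfying $\operatorname{long}(R) \in [8s,16s]$. Write $w$ and $\ell$ for the short and long sides of $R$. A standard necessary condition for $R$ to be internally spanned by 2s is that every consecutive pair of lines perpendicular to the long direction contains an initial 2. Selecting $\lfloor \ell/2 \rfloor$ disjoint such pairs (each of size $2w$) and combining independence with $1 - (1-q)^{2w} \leq 2wq$ yields
\[
\P(R \text{ is internally spanned}) \leq (2wq)^{\lfloor \ell/2 \rfloor} \leq (32sq)^{4s},
\]
where the last step uses $w \leq \ell \leq 16s$ and $\lfloor \ell/2 \rfloor \geq 4s$.

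The remaining step is a union bound over all candidate rectangles $R$. The number of rectangles in $A$ with a fixed pair of dimensions and orientation is at most $|A| = (2\lfloor q^{-s}\rfloor + 1)^2 \lesssim q^{-2s}$, and there are only $\cO(s^2)$ admissible choices of dimensions and orientation. Combining these estimates gives
\[
\P\bigl(\text{some maximal state-2 component has $\ell^\infty$-diameter} > 16s\bigr) \lesssim s^{2}\, q^{-2s}\, (32sq)^{4s} \leq c_0(s)\, q^{2s} \leq c_0(s)\, q^{s},
\]
valid for $q \in (0,1]$ with a constant $c_0(s)$ depending only on $s$; this is even stronger than the claimed bound. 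The only substantive point to verify is the rectangularity of final components on the bounded domain $A$, but this is the standard argument on $\bZ^2$ applied to the induced dynamics; the remaining bookkeeping for the Aizenman--Lebowitz step and the union bound is routine.
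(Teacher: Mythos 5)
Your proof is correct and follows essentially the same route as the paper: rectangularity of final state-2 components, Aizenman--Lebowitz at scale $j=16s$, the double-line (adjacent perpendicular lines each containing an initial $2$) necessary condition with independence of disjoint pairs, and a union bound over $\cO(s^2 q^{-2s})$ candidate rectangles. The only cosmetic point is that the step $(2wq)^{\lfloor \ell/2\rfloor}\le(32sq)^{4s}$ should formally use $\min(1,2wq)$ per factor (or restrict to small $q$), but this does not affect the conclusion since $c_0(s)$ absorbs large $q$.
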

\begin{proof}
    The proof is similar to Lemmas \ref{lem:nointerventionof2s} and \ref{lem:S3}, but here a slightly cruder estimate suffices.
    Again, all the maximal connected sets of state-$2$ vertices in the final configuration are rectangles. 
    For $j \in \N$, again denote by $E_j$ the event that the final configuration contains a rectangle of state-$2$ vertices with the longest side length at least $j$.
    By Lemma \ref{lem:Aizenman-Lebowitz}, when $E_j$ occurs, the box $A$ contains a rectangle $R$ with $\operatorname{long}(R) \in [j/2, j]$ such that $R$ is completely filled by $2$s in the final configuration.
    This means that every pair of neighboring lines that intersect $R$ and are each perpendicular to the longest side of $R$ contains a state-$2$ vertex within $R$ in the initial configuration.
    Once again, two pairs of neighboring parallel lines satisfy this condition independently if they do not overlap.
    Now, the number of such rectangles $R$ within $A$ is at most $(2\lfloor q^{-s} \rfloor + 1)^2 j^2 \leq 5q^{-2s}j^2$.
    Thus,
    \[
    \P(E_j) \leq 5q^{-2s} j^2 (2jq)^{j/4 - 1} 
    \leq
    (2j)^{j/4 + 2} q^{(j - 8s)/4 - 1}.
    \]
    Taking $j = 16s$ completes the proof.
\end{proof}

\subsection{Constructing a shell of helpful boxes}

Fix integers $k, m \geq 1$, whose values are to be determined later.
Let $N := 10\lfloor q^{-1} \rfloor$, and define 
\[
\texttt{Cross}
:= 
\left([-m-1, m+1] \times \left[-N, N\right] \right) \bigcup \left(\left[-N, N\right] \times [-m-1, m+1]\right),
\]

\[
\texttt{Segment}
:=
 [0, k + 1]\times \{0\},
\]
and for each vertex $x \in \bZ^2$, we write $\texttt{Cross}(x) := x + \texttt{Cross}$ and $\texttt{Segment}(x) = x + \texttt{Segment}$.
We call a vertex $x \in \bZ^2$ \textbf{supportive} if the following conditions hold: 
\begin{itemize}[leftmargin=1cm]
    \item [(SV1)]
    $\xi_0(x) = 2$;

    \item [(SV2)]
    There is another vertex $y \in \bZ^2$ ($y \neq x$) to the right of $x$, such that $y$ shares the same row as $x$, $2\le\|y - x\|_1 \leq k + 1$, $\xi_0(y) = 2$, and $\xi_0(z) = 0$ for all vertices $z$ between $x$ and $y$ in this row; and

    \item [(SV3)]
    $\xi_0(y) \neq 2$ for all vertices $y \in \texttt{Cross}(x) \setminus \texttt{Segment}(x)$.
\end{itemize}
Next, for each $u \in \bZ^2$, we define the rescaled box $Q_u$ at $u$ to be 
\[
Q_u := \left(\lfloor q^{-1} \rfloor + 1 \right) u + \left[-\frac{\lfloor q^{-1}\rfloor}{2}, \frac{\lfloor q^{-1}\rfloor}{2}\right]^2.
\]
We say that a box $Q_u$ is \textbf{helpful} if it contains at least one supportive vertex that is at least distance $m + k + 4$ from the internal boundary of the box.
\begin{remark}
\label{rmk:bufferstrip}
    This $m + k + 4$ distance requirement ensures that when two helpful boxes neighbor each other, there is a buffer strip of width at least $2m + 5$ where no requirement is imposed in the initial configuration. 
    This will be used later in the construction of a protected region.
\end{remark}

\begin{lemma}\label{lem:searchinggoodvertices}
    Fix $\epsilon > 0$ and a vertex $u \in \bZ^2$.
    Then one can take $k$ large enough (depending on $m$ and $\epsilon$ but not on $p,q$) so that
    \[
    \P(Q_u \text{ is helpful}) \geq 1 - \epsilon,
    \]
    provided that $p$ and $q$ are sufficiently small.
\end{lemma}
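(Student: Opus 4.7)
The plan is to apply a second-moment argument (Paley--Zygmund, cf.~Lemma~\ref{lem:2nd-moment}) to the random variable $X = \sum_{x\in Q_u^{\mathrm{int}}}\mathbbm 1\{x\text{ is supportive}\}$, where $Q_u^{\mathrm{int}}$ is the set of vertices in $Q_u$ at $\ell^\infty$-distance at least $m+k+4$ from $\partial Q_u$, so that $|Q_u^{\mathrm{int}}|\sim q^{-2}$ as $q\to 0$.

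For the first moment, I would use that the three conditions (SV1), (SV2), (SV3) depend on $\xi_0$ restricted to the pairwise disjoint sets $\{x\}$, $\texttt{Segment}(x)\setminus\{x\}$, and $\texttt{Cross}(x)\setminus\texttt{Segment}(x)$, so they are independent, yielding
\[
\P(x\text{ is supportive}) \;=\; q\cdot\Bigl(\sum_{j=2}^{k+1}(1-p-q)^{j-1}q\Bigr)\cdot(1-q)^{|\texttt{Cross}|-(k+2)}.
\]
With $N=10\lfloor q^{-1}\rfloor$ one has $|\texttt{Cross}|=(2m+3)(4N-2m-1)$, so the last factor converges to the positive constant $e^{-40(2m+3)}$ as $q\to 0$, and once $k(p+q)\le 1$ the middle factor is at least $kq/4$. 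Hence $\E X\ge c_m k$ for a constant $c_m=c_m(m)>0$ whenever $p,q$ are sufficiently small.

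The core of the argument is the pair bound. I claim that if $x_1\ne x_2$ in $Q_u^{\mathrm{int}}$ are both supportive then their coordinate differences must both exceed $m+1$: if $x_2\in\texttt{Cross}(x_1)\setminus\texttt{Segment}(x_1)$ then $\xi_0(x_2)=2$ breaks (SV3) for $x_1$; if $x_2\in\texttt{Segment}(x_1)\setminus\{x_1\}$ then $x_1$ sits in $\texttt{Cross}(x_2)\setminus\texttt{Segment}(x_2)$ and breaks (SV3) for $x_2$; and since the diameter of $Q_u$ is $\ll N$, the remaining option $x_2\notin\texttt{Cross}(x_1)$ forces $|(x_1-x_2)_i|>m+1$ for both $i$. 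For such pairs the two segments lie in different rows, so the (SV2)-events are independent, and the only nontrivial correlation comes from (SV3). A direct enumeration of cross overlaps gives $|\texttt{Cross}(x_1)\cap\texttt{Cross}(x_2)|=\cO(m^2)$ in the \emph{generic regime} where both coordinate differences exceed $2m+3$ (only the two ``corner'' H--V intersections of size $(2m+3)^2$ remain), and $|\texttt{Cross}(x_1)\cap\texttt{Cross}(x_2)|=\cO(m/q)$ in the \emph{side regime} where some coordinate difference lies in $(m+1,2m+3]$. The $\cO(m/q^3)$ side-regime pairs contribute $o((\E X)^2)$ to $\E X(X-1)$ as $q\to 0$ (for fixed $m,k$), while the generic pairs contribute $(1+\cO(qm^2))(\E X)^2$. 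Altogether $\E X^2\le \E X+(\E X)^2(1+\delta_q)$ with $\delta_q\to 0$ as $q\to 0$.

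Paley--Zygmund then yields $\P(X>0)\ge 1-\delta_q-1/\E X$. Choosing $k=k(m,\epsilon)$ large enough that $1/(c_m k)\le\epsilon/2$ and then $p,q$ small enough (depending on $k,m$) that $k(p+q)\le 1$ and $\delta_q\le\epsilon/2$ will complete the proof. The main technical obstacle is the cross-overlap calculation: even though each cross extends roughly $10/q$ along each arm and therefore reaches well beyond $Q_u$ (whose side is only $\sim 1/q$), two crosses whose centers are separated by more than $m+1$ in both coordinates intersect in a region of lower order in $1/q$ than $|\texttt{Cross}|$ itself, so the (SV3) events for well-separated supportive candidates are effectively independent and variance control is possible.
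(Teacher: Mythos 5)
Your proposal is correct, but it takes a genuinely different and more elementary route than the paper. The paper works at an intermediate scale: it tiles $Q_u$ with diagonally stacked boxes of side $\lfloor\delta/q\rfloor$, shows each is ``viable'' (satisfying (SV1) and a weakened (SV2) with no constraint on the intermediate sites) with probability bounded away from $0$ via a Chen--Stein Poisson approximation, gets $\asymp 1/\delta$ viable boxes by binomial large deviations, and then applies the conditional second-moment bound of Lemma~\ref{lem:2nd-moment} to events attached to the \emph{first} viability pair in each viable box, using positive association to control the (SV3)-type event and a final conditioning step to upgrade the weakened (SV2) to (SV2); the diagonal stacking is exactly what forces distinct candidates' crosses to overlap in only $2(2m+3)^2$ sites. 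You get the same separation for free from the structural observation that two supportive vertices of $Q_u$ can never lie in each other's crosses (if $x_2\in\texttt{Segment}(x_1)$ then $x_1\in\texttt{Cross}(x_2)\setminus\texttt{Segment}(x_2)$ and (SV1) for $x_1$ violates (SV3) for $x_2$; otherwise $x_2$ violates (SV3) for $x_1$), so every pair with a coordinate gap at most $m+1$ contributes zero; the $\cO(mq^{-3})$ ``band'' pairs can be handled even more cheaply than you indicate, by dropping (SV3) altogether and bounding the pair probability by $k^2q^4$, for a total contribution $\cO(mk^2q)\to 0$; and well-separated pairs carry a correlation factor at most $(1-q)^{-2(2m+3)^2}=1+\cO(m^2q)$. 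With $\E X$ of order $k$ (up to $m$-dependent constants), the plain second moment over all interior candidates then gives the lemma with $k=k(m,\epsilon)$, matching the quantifiers. One detail to tighten in a write-up: for generic pairs it is not literally true that ``the only nontrivial correlation comes from (SV3),'' because one candidate's vertical arm can pass through the other's $\texttt{Segment}$ when the column gap lies in $(2m+3,\,k+m+2]$; the clean fix is to bound the joint probability by discarding the no-$2$ requirement on the overlap $\texttt{Cross}(x_1)\cap\texttt{Cross}(x_2)$ (equivalently, conditioning on the configuration there), which still yields the factor $(1-q)^{-2|\texttt{Cross}(x_1)\cap\texttt{Cross}(x_2)|}$. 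What the paper's heavier machinery additionally provides is the explicit lower bound of order $q^2$ for the density of supportive vertices, which is reused in the proof of Lemma~\ref{lem:PRexists}; for the present lemma alone, your argument is shorter.
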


\begin{proof}
Assume first that $M := \lfloor\delta/q\rfloor$, for some small  
$\delta>0$, which will be picked later and depends only on $m$ and $\epsilon$. Our choice of $k$ will depend only on $\delta$ and thus 
also have the same dependence.

Call the box $B := [1,M]\times [1,M]$, or any other box of the same size, {\bf viable\/} if it contains a vertex 
$x$ that satisfies (SV1) and the following condition, weaker than (SV2):
\begin{itemize}[leftmargin=1.25cm]
    \item [(SV2')]
    there is another vertex $y \in \bZ^2$ ($y \neq x$) to the right of $x$, such that $y$ shares the same row as $x$, $2\le\|y - x\|_1 \leq k + 1$ and $\xi_0(y) = 2$.
\end{itemize}

Let 
$I_x$ be the indicator of the event that $x$ satisfies (SV1) and (SV2'), 
for 
\[
x\in [5m + 5k, M-(5m + 5k)]\times[5m + 5k,M-(5m + 5k)].
\]
For $x$ in this range that satisfies (SV1) and (SV2')  with a corresponding $y$, we call $(x,y)$ a {\bf viability pair\/}.

Note that $\E I_x\le kq^2$ for all $x$. Moreover, $I_x$ and $I_w$
are independent unless $x$ and $w$ are on the same horizontal line at distance at most $k+1$. In the latter case, 
assuming $x$ is to the left of $w$, the horizontal interval of $2k+3$ vertices starting at $x$ must contain at least three $2$s. It follows that when $I_x$ and $I_w$
are dependent, 
$$
\E(I_xI_w)\le 5^3 k^3 q^3. 
$$
By the local dependence theorem for the Poisson approximation 
(e.g., Corollary 2.C.5 in \cite{BHJ1992}), 
the total variation distance between the distribution of $W := \sum_x I_x$ and $\text{\rm Poisson}(\E W)$ is bounded above by an absolute constant times
$$
M^2k^4q^3\le \delta^2 k^4 q.
$$
Further, if $qk$ is small enough, 
$$
\E I_z\ge q(1-(1-q)^k)\ge \frac 34 q^2 k
$$
and so 
$$
\E W\ge \frac 12 \delta^2k.
$$
We pick $k=\lceil 1/\delta^2\rceil$, to get 
$$
\P(B\text{ is viable})\ge \P (W\ge 1)\ge 1-e^{-1/2}-\cO(\delta^{-6}q)> 1/3,
$$
if $q$ is small enough. 

Now consider the boxes of size $M\times M$ stacked diagonally without overlap within
$Q_u$, so that there are at least $0.9/\delta$ of them. We call these 
{\bf diagonal boxes\/}. As these boxes 
are viable independently with probability at least $1/3$, 
large deviations for the binomial distribution imply that, for 
$D=\lceil 1/(6\delta)\rceil$ and an absolute constant $c>0$, 
\begin{equation}\label{eq:D-viable}
\P(\text{the
number of viable diagonal 
boxes is at least $D$})\ge 1-\exp(-c/\delta)\ge 1-\epsilon/3, 
\end{equation}
if $\delta$ is small enough.



For a diagonal box $B$, pick an arbitrary order of the (ordered) pairs 
$z=(x,y)$ 
of vertices. Let $F_z$ be the 
event that $z$ is the {\it first\/} viability pair. Then $V_B := \cup_z F_z$ is the event 
that $B$ is viable. Let $G_z$ be the 
event that 
$z=(x,y)$ satisfies 
\begin{itemize}[leftmargin=1.25cm]
\item[(SV3')]
$\xi_0(y) \neq 2$ for all vertices $y \in \texttt{Cross}(x) \setminus \{x,y\}$,
\end{itemize}
and let $A_B := \cup_z(F_z\cap G_z)$. 


Consider any deterministic collection of $D$ diagonal boxes $B_i$, $i=1,\ldots, D$, shorten $V_i := V_{B_i}$ and 
$A_i := A_{B_i}$, and let
$V := \cap_{i=1}^D V_i$ be the event that all of these $D$ boxes are viable.
We proceed to show that $\P(\cup_{i=1}^D A_{i}\mid V)$ is 
$(\epsilon/3)$-close to $1$. 
 
If $i\ne j$, and $z=(x,y)$ and $z'=(x',y')$ are viability pairs in
diagonal boxes $B_i$ and $B_j$, respectively, then 
$\texttt{Cross}(x)$ and $\texttt{Cross}(x')$ intersect in $2(2m+3)^2$
vertices. It follows that 
\begin{equation*}
\begin{aligned}
\P(A_i\cap A_j\mid V)&=\P(A_i\cap A_j\mid V_i\cap V_j)
\\&=
\frac{\sum_{z,z'}\P(F_z\cap F_{z'}\cap G_z\cap G_{z'})}{\sum_{z,z'}\P(F_z\cap F_{z'})}
\\&=
\frac{\sum_{z,z'}\P(F_z\cap G_{z})\P(F_{z'}\cap G_{z'})/(1-q)^{2(2m+3)^2}}{\sum_{z,z'}\P(F_z\cap F_{z'})}
\\&=
\frac{\sum_{z}\P(F_z\cap G_{z})\cdot \sum_{z'}\P(F_{z'}\cap G_{z'})}{\P(V_i)\P(V_j)}\cdot \frac 1{(1-q)^{2(2m+3)^2}}
\\&=
\P(A_i\mid V)\cdot \P(A_j\mid V)
\cdot \frac 1{(1-q)^{2(2m+3)^2}}.
\end{aligned}
\end{equation*}
If $q$ is small enough, the last factor is at most $1+\epsilon/6$. 
Furthermore, 
$$
\P(A_i\mid V)=\P(A_i\mid V_i)
=\frac{\sum_z \P(F_z\cap G_z)}{\sum_z \P(F_z)}.
$$
For a fixed $z=(x,y)$, the events $F_z$ and $G_z$ are both 
decreasing in the configuration of $2$s outside of $\{x,y\}$ and are therefore 
positively correlated, so 
$$ 
\P(F_z\cap G_z)\ge \P(F_z)\,\P(G_z)\ge (1-q)^{|\texttt{Cross}(x_i)|}\, \P(F_z)
\ge  (1-q)^{(2m+3)N}\, \P(F_z)\ge \alpha\,  \P(F_z), 
$$
for a constant $\alpha=\alpha(m)>0$. Consequently, 
$\P(A_i\mid V)\ge \alpha$ and 
$$
\sum_{i=1}^D \P(A_i\mid V)\ge \alpha/(6\delta)\ge 6/\epsilon,
$$
after we choose $\delta=\delta(m, \epsilon)$ small enough. 
Recall that the choice of $\delta$ also fixes $k$. 
By 
Lemma~\ref{lem:2nd-moment}, applied to the conditional probabilities,
\begin{equation}\label{eq:cond-D}
\P(\cup_{i=1}^D A_i\mid V)\ge 1-\epsilon/3.
\end{equation}

We now order {\it all\/} diagonal boxes, say top-down, and again denote 
the resulting sequence by $B_i$, with the same abbreviations
$V_i$ and $A_i$. We also order all $D$-tuples $(i_1,\ldots, i_D)$
of strictly positive integers such that $i_1<\ldots<i_D$, say lexicographically.
Let $H_{i_1,\ldots, i_D}$ be the event that $(i_1,\ldots, i_D)$ is the first $D$-tuple for which the diagonal boxes 
$B_{i_1},\ldots, B_{i_D}$ are all viable. Then, by (\ref{eq:cond-D}) and 
(\ref{eq:D-viable}),
\begin{equation}\label{eq:union-A}
\begin{aligned}
    \P(\cup_i A_i)&\ge \sum_{i_1<\ldots <i_D} \P(\cup_i A_i
\mid H_{i_1,\ldots, i_D})\,\P(H_{i_1,\ldots, i_D})\\
&\ge (1-\epsilon/3) \sum_{i_1<\ldots <i_D}\P(H_{i_1,\ldots, i_D})\\
&= (1-\epsilon/3)\,\P(\text{there are at least $D$ viable diagonal boxes})\\
&\ge (1-\epsilon/3)^2>1-2\epsilon/3.
\end{aligned}
\end{equation}

Finally, 
on $\cup_i A_i$, pick the minimal 
$i$ so that $A_i$ happens, with corresponding first viability 
pair $(x_i,y_i)$. 
Note that this only affects the configuration of $2$s. In order for 
$(x_i,y_i)$ to also satisfy (SV2), we need to additionally ensure 
that none of the (at most 
$k$) vertices between $x_i$ and $y_i$ is initially in 
state $1$, but the conditional probability of this, 
given that $A_i$ happens with a first 
deterministically chosen $(x_i,y_i)$, is 
at least 
$$
[(1-p-q)/(1-q)]^k \ge 1-\epsilon/3, 
$$
if $p$ and $q$ are small enough. Then, by a decomposition argument similar 
to (\ref{eq:union-A}), 
$$
\P(Q_u\text{ is helpful})\ge (1-2\epsilon/3)(1-\epsilon/3)>1-\epsilon,
$$
which ends the proof.
\end{proof}

We now describe a ``protective shell" that will be crucial when we later build the protected region.
This technique is based on the oriented surface construction devised in \cite{LipPerc, GrimmettHolroyd10, GrimmettHolroyd12}.
We start with some definitions that were used in \cite{GS2020, GravnerHolroydSivakoff}.

Given a subset $U \subset \bZ^2$, we say that a vertex $u \in \bZ^2$ off the coordinate axes is \textbf{protected by $U$} provided that
\begin{itemize}
    \item 
    if $u \in [1, \infty)^2 \cup (-\infty, -1]^2$, then both $u + [-2, -1] \times [1, 2]$ and $u + [1, 2] \times [-2, -1]$ intersect $U$; and

    \item 
    if $u \in (-\infty, - 1] \times [1, \infty) \cup [1, \infty) \times (-\infty, - 1]$, then both $u + [-2, -1]^2$ and $u + [1, 2]^2$ intersect $U$.
\end{itemize}
A subset $S \subset \bZ^2$ is called a \textbf{shell} of radius $r \in \N$ when it satisfies the following properties:
\begin{enumerate}[leftmargin=1.25cm]
    \item[(SH1)]
    The set $S$ contains all the vertices $u$ with both $\|u\|_1 = r$ and $\|u\|_\infty \geq r - 3$;

    \item[(SH2)]
    each $u \in S$ satisfies $r \leq \|u\|_1 \leq r + 2\sqrt{r}$ and $\|u\|_\infty \leq r$;

    \item[(SH3)]
    for each $\varphi \in \{(\pm 1, \pm 1)\}$, there exists an integer $\kappa = \kappa(\varphi) \geq r / 2$ such that $\kappa \varphi \in S$; and

    \item[(SH4)]
    if $u = (u_1, u_2) \in S$ and $|u_1| \geq 3, |u_2| \geq 3$, then $u$ is protected by $S$.
\end{enumerate}

Due to the similarity to the method used in \cite{GS2020} and \cite{GravnerHolroydSivakoff}, we omit the proof of the following proposition.
For more details, see Proposition 3.6 of \cite{GS2020} and Proposition 7 of \cite{GravnerHolroydSivakoff}.

\begin{prop}
\label{prop:shellsatisfiesproperties}
    Paint each vertex in $\bZ^2$ independently by black with probability $b$ and white otherwise.
    Let $E_r$ be the event that
    there exists a shell of radius $r$ consisting of only black vertices. 
    There exists $b_1 \in (0, 1)$ such that for every $b > b_1$ and every $r \geq 1$, $\P(E_r) \geq 1/2$.
\end{prop}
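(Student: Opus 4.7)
My plan is to adapt the Lipschitz-percolation shell construction of \cite{LipPerc, GrimmettHolroyd10, GrimmettHolroyd12} and its application in \cite{GS2020, GravnerHolroydSivakoff}. The shell $S$ will be assembled from two kinds of pieces: the $O(1)$ axis vertices forced by (SH1), and four ``diagonal arcs,'' one in each open quadrant, each joining the endpoints of two adjacent axis segments while staying inside the annulus $\{r \le \|u\|_1 \le r + 2\sqrt{r}\}$. The axis vertices from (SH1) form a bounded deterministic set, so requiring them all to be black costs only a factor $b^{C}$ with $C$ independent of $r$, which can be absorbed into the choice of $b_1$. For $r$ below any fixed constant, the diagonal arcs degenerate to a bounded number of required vertices and the whole claim reduces to a direct computation, so I may assume $r$ is large.

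For a single quadrant, say the first, I would build the diagonal arc inductively, producing black vertices $u_0, u_1, \ldots, u_J$ with strictly decreasing first coordinate. At step $j$, I search for $u_{j+1}$ in a candidate window of $O(1)$ size placed slightly above and to the left of $u_j$. The candidate $u_{j+1}$ is admissible when it is itself black and when the two $2\times 2$ protection patches required by (SH4) at $u_j$ each contain a black vertex. Spacing the steps so that each admissibility check examines only vertices disjoint from those examined in prior steps makes the sequence of admissibility events $1$-dependent. The construction terminates when the arc meets the axis arc near the perpendicular axis, which forces it to cross the diagonal $\varphi \bR$ and hence yields (SH3) automatically.

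The key probabilistic estimate is that a single admissibility check fails with probability at most $\alpha(b) := C(1-b)$, uniformly in $j$ and $r$, where $C$ depends only on the window size. Hence the existence of the arc is dominated by the existence of a crossing in a $1$-dependent site percolation of density $1 - \alpha(b)$. When $b$ is close enough to $1$, this percolation is supercritical by the stochastic-domination result of \cite{LiggettSchonmannStacey}, and a standard block argument gives that the arc completes its traverse in $O(r)$ steps with probability at least $7/8$, uniformly in $r$. The same random-walk-type analysis controls the vertical fluctuation of the arc and certifies the $2\sqrt{r}$ deviation bound in (SH2) by CLT-scale estimates. A union bound over the four quadrants and the axis vertices finally yields $\P(E_r) \ge 1/2$ for a suitable uniform $b_1$.

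The one subtle point is the decoupling between the blackness test for $u_{j+1}$ and the $2\times 2$ protection tests for $u_j$: the construction must be arranged with a one-step ``look-ahead'' so that the patches tested at step $j$ are disjoint from those that will be tested at steps $j\pm 1$, preserving the $1$-dependent structure. This is exactly the obstacle handled in the cited references for similar shell geometries, and I expect it to be the main step that needs care here. Once the disjointness is in place, the remaining work amounts to bookkeeping: checking that the geometric layout of the windows respects the $\ell^1$-annulus bound in (SH2) and the connection to the prescribed axis arcs in (SH1).
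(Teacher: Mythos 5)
Your overall strategy --- forcing the $O(1)$ axis vertices of (SH1), building a diagonal arc in each quadrant inside the annulus of width $2\sqrt r$, and pushing the randomness through a highly supercritical $1$-dependent percolation via \cite{LiggettSchonmannStacey} --- is the same route the paper takes, since the paper simply defers to the shell constructions of \cite{GS2020} and \cite{GravnerHolroydSivakoff} built on the Lipschitz-surface technology of \cite{LipPerc, GrimmettHolroyd10, GrimmettHolroyd12}. However, as written, your argument has a genuine gap exactly at the step that those references exist to handle: the uniformity in $r$ of the lower bound. In your sequential construction, each admissibility check fails with probability $\alpha(b)=C(1-b)$, which is a fixed positive constant once $b$ is fixed; an arc that requires all $\Theta(r)$ consecutive ($1$-dependent) checks to succeed has success probability decaying geometrically in $r$, so no choice of $b_1$ independent of $r$ can give ``at least $7/8$ uniformly in $r$'' by this route. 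The assertion that ``the existence of the arc is dominated by the existence of a crossing in a $1$-dependent site percolation'' is precisely the claim that needs proof: a generic open crossing of an $r\times 2\sqrt r$ strip is not a shell arc --- it need not satisfy the protection condition (SH4) at every vertex, need not have the bounded local slope implicit in (SH2) and in the way the paper later uses $S$ (Lemma 5.11 relies on the shell taking at most two consecutive steps in one direction), and need not terminate on the forced axis vertices of (SH1). The mechanism that rescues uniformity in $r$ is that the surface/arc is allowed to deviate locally around rare bad regions, with deviations having exponential tails, so that all failures are absorbed within the $2\sqrt r$ slack; this rerouting-with-exponential-tails structure is the content of the Lipschitz/duality construction, not a consequence of a per-step failure bound plus a block argument.

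Concretely, to close the gap you would need either (i) to import the Lipschitz random surface result (for $b$ close to $1$ there is an oriented surface of black, protected vertices whose height above the ideal diagonal has exponential tails at every site, so $\P(\text{max deviation over } \Theta(r) \text{ sites} > 2\sqrt r) \le C r e^{-c\sqrt r}$, which is uniformly small after taking $b_1$ close enough to $1$), or (ii) to set up a genuine duality/Peierls argument in the $r\times 2\sqrt r$ strip for a percolation of \emph{blocks} whose openness already encodes blackness, protection, and slope control, so that a blocking dual path must cross the $\sqrt r$-width and hence has probability at most $Cr\,\alpha^{c\sqrt r}$, whose supremum over $r$ tends to $0$ as $b\to 1$. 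Your note about the one-step look-ahead and disjointness of test patches is the right local bookkeeping, but it does not substitute for this global rerouting estimate; also note that your ``CLT-scale'' heuristic for the $2\sqrt r$ bound points at the wrong mechanism (the deviations are not diffusive; they are sums of rare, exponentially-tailed local detours), though this does not affect correctness once the right construction is in place.
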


\subsection{Constructing a protected region}

We now construct a set $Z \subset \bZ^2$ which will satisfy the conditions for a protected region.

First, suppose that there is a shell $S$ of radius $r$ such that for every $u \in S$, $Q_u$ is a helpful box. 
For each $u = (u_1, u_2) \in S$ such that $|u_1| \geq 3$ and $|u_2| \geq 3$, select a supportive vertex $x_u$ from the box $Q_u$.
For each of these selected $x_u = (x_1, x_2)$, define $x_u' = (x_1', x_2')$ by $x_1' = x_1 + 1$, and $x_2' = x_2 - x_2/|x_2|$.
In other words, if $x_u$ is in the upper (resp. lower) half plane, then $x_u'$ is the lower-right (resp. upper-right) diagonal neighbor of $x_u$.
Let $U$ be the set formed by all these vertices $x_u'$.  
Define a \textbf{fortress} to be a square of side length $11\lfloor q^{-1} \rfloor + 10$ if $\floor{q^{-1}}$ is even and $11\lfloor q^{-1} \rfloor + 11$ if $\floor{q^{-1}}$ is odd, all four of whose corners are supportive vertices.
Suppose that there is such a fortress centered at each of four vertices $(\pm r(\lfloor q^{-1} \rfloor + 1), 0), (0, \pm r(\lfloor q^{-1} \rfloor + 1)$.
For each $v = (v_1,v_2)$ of the $16$ corners of these four fortresses, let $v' = (v_1 + 1, v_2 - v_2/|v_2|)$ be defined similarly as the $x_u'$ from $x_u$ above.
Let $K$ be the set formed by all the $16$ vertices $v'$ defined based on the $16$ corners. 

For each vertex $x \in \bZ^2$, define $\texttt{Rect}(x)$ to be the rectangle with opposite corners at $x$ and the origin. 
For example, if $x = (x_1, x_2)$ has $x_1 > 0$ and $x_2 < 0$, then $\texttt{Rect}(x) = [0, x_1] \times [x_2, 0]$.
Now, we define the set $Z$ by
\begin{equation}
    \label{def:Z}
    Z := \bigcup_{x \in U \cup K} \texttt{Rect}(x).
\end{equation}

\begin{remark}\label{rmk:poscorr} {\rm
    From the definition of $Z$, we can see that, in fact, only the $8$ outer vertices (farther from the origin) from the corners of the fortresses are needed. 
    So we may discard the $8$ inner vertices from the set $K$.
    Meanwhile, given a shell $S$ of radius $r$ each of whose vertices corresponds to a helpful box, the fortress is defined in a way such that the supportive vertices do not interfere with other parts of the construction. 
    More precisely,
    \begin{itemize}
        \item 
        the side length of each fortress is designed to be long enough so that any pair of its supportive vertices at the corners do not interfere with each other in the initial configuration, but also not too long so that the ``arms" still intersect;
        
        \item 
        the $8$ outer supportive vertices from the fortress corners are exactly aligned with the boundaries of the rescaled boxes $Q_u$ in the $\bZ^2$ plane, so that whenever their crosses intersect the boxes, the ``arms" of the crosses go through the buffer strips along the boundaries of the boxes (cf. Remark \ref{rmk:bufferstrip});
        and therefore, these outer supportive vertices for the fortresses do not interfere with the supportive vertices from the helpful boxes associated with $S$ in the initial configuration. 
    \end{itemize}
    }
\end{remark}

\begin{lemma}\label{lem:PR1PR2}
    Suppose that $Z$ is defined above by (\ref{def:Z}).
    Then the set $Z$ satisfies (PR1) and (PR2) as long as $q$ is sufficiently small.
\end{lemma}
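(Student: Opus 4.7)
The plan is to unpack the geometry of $Z$ as a union of axis-aligned rectangles with one corner at the origin, and to verify (PR1) and (PR2) via a local analysis at each boundary vertex. By the manifest fourfold symmetry of the construction, it suffices to work in the first quadrant. There, $Z$ is the downward closure (under componentwise $\le$) of the finite point set $P := \{x_u' : u \in S,\ u_1, u_2 \ge 3\} \cup \{v' : v \text{ is an outer fortress corner in the first quadrant}\}$. Its boundary is therefore a monotone staircase whose convex (outward-pointing) corners are exactly the Pareto-maximal elements of $P$. Flat boundary vertices have a single exterior neighbor and concave corners have none, so (PR1) is vacuous at such sites; only the convex corners require analysis.

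For (PR1), I would fix a convex corner $u = x_u' = (x_1+1, x_2-1)$ arising from a supportive vertex $x_u = (x_1, x_2)$ and inspect its two exterior neighbors. The left and downward neighbors of $u$ lie in $\texttt{Rect}(x_u') \subseteq Z$, so the exterior neighbors must be the upward $(x_1+1, x_2)$ and the rightward $(x_1+2, x_2-1)$ ones. Pareto-maximality of $x_u'$ forces both to lie in $Z^c$: any $\texttt{Rect}(x_v')$ containing either of them would have corner $x_v'$ strictly dominating $x_u'$ in one coordinate. By (SV1)--(SV2) there are 2s at $x_u$ and at $y = (x_1+d, x_2)$ for some $2 \le d \le k+1$, with 0s in between, so $(x_1+1, x_2)$ is a blocking 0. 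This provides the required $\texttt{Nbrs}(u, Z^c) - 1 = 1$ blocking 0. The eight effective outer fortress corners (Remark \ref{rmk:poscorr}) are themselves supportive vertices and contribute additional convex corners to $P$; the same argument applies verbatim.

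For (PR2), I would fix a boundary vertex $u \in Z$ and choose a supportive vertex $x_u$ whose rectangle $\texttt{Rect}(x_u')$ carries $u$ on its boundary. Since $N = 10\lfloor q^{-1}\rfloor$ dwarfs $m$, the $\ell^\infty$-ball of radius $m$ about $u$ is contained in the vertical strip $x_u + [-m-1, m+1] \times [-N, N]$ of $\texttt{Cross}(x_u)$. By (SV3), every initial 2 in this strip lies on $\texttt{Segment}(x_u) = [x_1, x_1+k+1] \times \{x_2\}$. Each such 2 has $y$-coordinate $x_2$, one unit above the top edge of $\texttt{Rect}(x_u')$. The box-spacing convention $\lfloor q^{-1}\rfloor + 1 \gg k$ lifts Pareto-maximality at the $\bZ^2$-coordinate level of $x_u'$ to Pareto-maximality of $u$ in $S$, which rules out any $\texttt{Rect}(x_v')$ reaching $y$-coordinate $\ge x_2$ while having $x$-coordinate $\ge x_1$; thus none of the segment's 2s lies in $Z$. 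Moreover, the buffer-strip width $\ge 2m+5$ between adjacent helpful boxes (Remark \ref{rmk:bufferstrip}) prevents any other supportive vertex from placing a 2 within this ball. Consequently no 2 in $Z$ sits within $\ell^\infty$-distance $m$ of $u$.

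The main technical obstacle is the careful translation between Pareto-maximality at the discrete shell level and at the actual $\bZ^2$-coordinate level of the selected $x_u'$. The scale separation between the macroscopic box spacing $\lfloor q^{-1}\rfloor + 1$ and the microscopic offsets $k$ and $m$ is what makes this translation clean, but one must still rule out spurious domination arising from shell vertices that are Pareto-incomparable in $S$ yet whose chosen supportive vertices happen to sit close in $\bZ^2$. The fortress construction (Remark \ref{rmk:poscorr}) is designed precisely so that its outer corners integrate with the staircase framework without producing anomalous convex corners near the axes, where the shell itself offers insufficient Pareto structure.
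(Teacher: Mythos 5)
Your treatment of (PR1) is essentially the paper's: the only vertices of $Z$ with two exterior neighbors are convex corners of the staircase, each of the form $x_u'$ (or a fortress corner's $v'$), and its vertical exterior neighbor is the first vertex of the ``$20\cdots02$'' row guaranteed by (SV1)--(SV2), hence a blocking $0$. That half is correct, modulo the near-axis/fortress discussion you wave at.

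The (PR2) half, however, has a genuine gap, in two places. First, the containment claim ``the $\ell^\infty$-ball of radius $m$ about $u$ is contained in the vertical strip $x_u+[-m-1,m+1]\times[-N,N]$'' is false for boundary vertices on the long horizontal runs of the staircase, and more importantly nothing in your argument bounds the length of an exposed boundary run: you never invoke (SH4) or the circuit structure of the shell. The paper's proof rests precisely on this point --- the locally bounded slope of $S$ forces a convex corner of $Z$ within distance $6/q$ of any boundary vertex, and only then is $N=10\lfloor q^{-1}\rfloor$ large enough that the $m$-ball around $u$ sits inside $\texttt{Cross}(x_v)$ of the supportive vertex attached to that nearby corner; without the run-length bound, a boundary vertex could a priori lie outside every cross, and (SV3) gives no control on $2$s near it. Second, the step ``Pareto-maximality \dots rules out any $\texttt{Rect}(x_v')$ reaching $y$-coordinate $\ge x_2$ while having $x$-coordinate $\ge x_1$'' is simply wrong: consecutive shell vertices are adjacent, so a shell vertex directly above $u$ (with box $Q_{u+(0,1)}$) exists generically, and a selected supportive vertex there at column $x_1-1$ would give a corner $(x_1,Y)$ with $Y\ge x_2$ that dominates neither $x_u'$ in $\bZ^2$ nor anything at the shell level, yet whose rectangle contains the initial $2$ at $x_u=(x_1,x_2)$, i.e.\ a $2$ in $Z$ at distance $1$ from the convex corner. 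What actually excludes or defuses this scenario is not Pareto structure but (SV3) itself: since supportive vertices of vertically nearby boxes are within vertical distance well under $N$ of one another, each would lie in the other's cross unless their horizontal separation exceeds $m+1$; and with that separation, the higher rectangle either misses the columns of $\texttt{Segment}(x_u)$ altogether, or (using the buffer of Remark~\ref{rmk:bufferstrip}) engulfs those $2$s together with their entire $m$-neighborhood, so they end up farther than $m$ from the boundary of $Z$. Your appeal to the buffer strip only to ``prevent any other supportive vertex from placing a 2 within this ball'' does not substitute for this case analysis, since the buffer controls only the coordinate transverse to a shared box side. As written, the (PR2) argument therefore does not go through.
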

\begin{proof}
    By construction, every vertex $z \in Z$ has $\texttt{Nbrs}(z, Z^c) \leq 2$.
    In particular, every $z \in Z$ with $\texttt{Nbrs}(z, Z^c) = 2$ is a convex corner of $Z$ and neighbors a blocking $0$ in $Z^c$ above (resp. below) $z$ if $z$ is in the upper-half (resp. lower-half) plane (by construction, such a corner $z$ cannot be on the $x$-axis for small enough $q$).  
    Thus, $Z$ satisfies (PR1).

    For (PR2), note that the slope of $S$ is locally bounded both above and below due to (SH4), and a similar property holds for the boxes corresponding to the vertices in $S$.
    In particular, the vertices in the shell $S$ form a circuit which takes at most two consecutive steps in the same direction.
    With the fortresses taken into account, from any point on the boundary of $Z$, we can find a convex corner of $Z$ within distance $6/q$ by going up, left, down, or right.
    Such a convex corner diagonally neighbors a supportive vertex, and therefore (PR2) is satisfied. 
\end{proof}
 
Next, we show that a region $Z$, defined by (6.6) and with diameter a power
of $1/q$, 
exists with high probability. For this we need sufficiently many independent trials to find fortresses, which are polynomially unlikely. 
Set $N_0 = 3\lfloor q^{-40}\rfloor, n_0 = \lfloor q^{-21} \rfloor, T = \lfloor q^{-19} \rfloor$, and $\Delta = \lfloor q^{-21} \rfloor$.
For $i = 1,\dots, T$, define the sequence of separated annuli
\[
A_i := \{u \in \bZ^2 : n_0 + (2i - 1)\Delta \leq \|u\|_1 \leq n_0 + 2i\Delta\}.
\]
\begin{lemma}\label{lem:PRexists}
    Fix an integer $m \geq 1$.
    For a small enough $\epsilon > 0$, the following holds provided that $p, q$ are both sufficiently small.
    With probability at least $1 - \exp(-1/(4q^2))$, there exists a protected region $Z$ that satisfies (PR1) and (PR2), contains the origin, and is contained in $\{u \in \bZ^2 : \|u\|_1 \leq N_0\}$.
\end{lemma}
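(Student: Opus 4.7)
The plan is to assemble $Z$ from two ingredients supplied with high probability by independent trials over the $T$ separated annuli $A_i$: (a) a shell of helpful boxes, furnished by Proposition~\ref{prop:shellsatisfiesproperties}, and (b) four fortresses at the cardinal positions $(\pm r(\lfloor q^{-1}\rfloor + 1),0)$ and $(0, \pm r(\lfloor q^{-1}\rfloor + 1))$ attached to the shell's radius $r$. Once these are produced, Lemma~\ref{lem:PR1PR2} immediately yields (PR1) and (PR2), and both $0 \in Z$ and the enclosure $Z \subset \{\|u\|_1 \leq N_0\}$ follow from (\ref{def:Z}) and the bound $n_0 + 2T\Delta \leq N_0$ valid for small $q$ (after the scaling between box indices and actual lattice sites is accounted for).

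First I fix $m$, choose $\epsilon > 0$ small enough that $1-\epsilon$ exceeds the percolation threshold $b_1$ of Proposition~\ref{prop:shellsatisfiesproperties}, and apply Lemma~\ref{lem:searchinggoodvertices} to obtain $k = k(m,\epsilon)$ such that $\P(Q_u \text{ is helpful}) \geq 1-\epsilon$. The helpful event at $Q_u$ is determined by the initial configuration within $\ell^\infty$-distance $O(1/q + m + k)$ of $Q_u$, which is a bounded neighborhood in the rescaled box lattice with dependence range uniform in $q$. By Liggett-Schonmann-Stacey domination \cite{LiggettSchonmannStacey}, the helpful indicators stochastically dominate an i.i.d.\ Bernoulli site percolation with parameter above $b_1$, so at any prescribed radius a shell of helpful boxes exists with probability at least $1/2$.

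Next, for a single trial inside annulus $A_i$, I estimate the joint probability of a shell at some admissible radius $r$ together with the four required fortresses. A supportive vertex at a prescribed location has probability $\Theta(q^2)$: factor $q$ from (SV1), factor $\Theta(q)$ (with constants depending on $k$) from (SV2), and a bounded factor $(1-q)^{|\texttt{Cross}|} = \Theta(1)$ from (SV3). Hence each fortress has probability $\Theta(q^{8})$ (its four corners), the four fortresses together contribute $\Theta(q^{32})$, and these events involve regions along the coordinate axes that are disjoint from the shell region once the shell is chosen to avoid small-coordinate sites via (SH1)--(SH4). Summing over a well-separated subcollection of the $\Omega(q^{-21})$ admissible radii in $A_i$ and controlling the second moment via Lemma~\ref{lem:2nd-moment}, I obtain per-annulus success probability $\alpha = \Omega(q^{11})$. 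The annuli are separated by $\Delta = \lfloor q^{-21}\rfloor$, which far exceeds the $O(1/q)$ dependence range of the underlying events, so the $T = \lfloor q^{-19}\rfloor$ trials are independent, giving failure probability at most
\[
(1-\alpha)^T \leq \exp(-\alpha T) = \exp\bigl(-\Omega(q^{-8})\bigr) \leq \exp\bigl(-1/(4q^2)\bigr).
\]

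The main obstacle I anticipate is the per-annulus lower bound in the previous paragraph, since the shell event is only moderately likely while the fortress events are polynomially rare in $q$ and are probed at many overlapping radii. Restricting to radii spaced by $\gtrsim 1/q$ apart removes most of the overlap so that fortress events across distinct admissible $r$ depend on essentially disjoint configurations, whereupon a second moment argument parallel to the one in the proof of Lemma~\ref{lem:searchinggoodvertices} yields the claimed $\Omega(q^{11})$. The remaining checks --- that $Z$ contains the origin and lies in $\{\|u\|_1 \leq N_0\}$ --- are routine from (\ref{def:Z}) and the arithmetic $n_0 + 2T\Delta \leq N_0$.
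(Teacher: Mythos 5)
Your overall architecture (LSS domination to make helpful boxes a supercritical site process, a shell from Proposition~\ref{prop:shellsatisfiesproperties} per annulus, independent trials over the separated annuli $A_i$, then Lemma~\ref{lem:PR1PR2} for (PR1)--(PR2)) matches the paper, but the way you pay for the fortresses contains a genuine gap. Your factorization of the per-radius event rests on the claim that the fortress regions are ``disjoint from the shell region once the shell is chosen to avoid small-coordinate sites via (SH1)--(SH4)''. This is backwards: by (SH1) the shell must contain the cardinal points $(\pm r,0)$, $(0,\pm r)$, and the fortresses are centered exactly at the corresponding lattice points $(\pm r(\lfloor q^{-1}\rfloor+1),0)$, $(0,\pm r(\lfloor q^{-1}\rfloor+1))$, so the fortress squares and the crosses of their corner vertices overlap the very boxes $Q_u$ whose helpfulness the shell event requires. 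Decoupling these two events is precisely the delicate point; the paper does it by keeping only the $8$ outer corners and choosing the fortress side length so that those corners are aligned with the box boundaries and their crosses run through the buffer strips of Remark~\ref{rmk:bufferstrip} (see Remark~\ref{rmk:poscorr}). You instead require all $16$ corners to be supportive, and the inner corners' crosses enjoy no such alignment, so the asserted independence (hence the joint lower bound of order $q^{32}$ together with the shell) is unproven.

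Second, because you use $16$ corners you get only $\Theta(q^{32})$ per radius, which with a single radius per annulus gives $q^{32}T\approx q^{13}\to 0$; your proof therefore stands or falls with the multi-radius boost inside each annulus, and that step is not justified. Lemma~\ref{lem:2nd-moment} requires $\sum_r\P(E_r)\ge 1/\epsilon$, but here the sum is at most of order $q^{-20}\cdot q^{32}=q^{12}\ll 1$ (note also that an annulus of width $\Delta=\lfloor q^{-21}\rfloor$ contains only about $\Delta/(\lfloor q^{-1}\rfloor+1)\approx q^{-20}$ admissible rescaled radii, not $q^{-21}$). Moreover, the events at different radii within one annulus are far from ``essentially disjoint'': the shell at radius $r$ occupies the band of rescaled radii $[r,r+2\sqrt r]$ with $2\sqrt r\approx q^{-19.5}$, comparable to the annulus width $\approx q^{-20}$ in rescaled units, so spacing radii by $\gtrsim 1/q$ does nothing to decorrelate the shell components. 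A Bonferroni bound exploiting disjointness of the fortress locations alone could in principle recover a per-annulus probability of order $q^{12}$, but that is not the argument you gave, and it still hinges on the unproved shell/fortress decoupling above. The paper sidesteps all of this: with only the $8$ outer supportive corners the per-annulus success probability is already at least $q^{17}/2$ at the single radius $r_i$, and the $T=\lfloor q^{-19}\rfloor$ independent annuli then give the stated bound $\exp(-1/(4q^2))$.
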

\begin{proof}
    Paint each vertex $u \in \bZ^2$ black if the box $Q_u$ is helpful.
    For $i = 1,\dots, T$, let
    \[
    r_i = \lfloor (n_0 + (2i - 1)\Delta) / (\lfloor q^{-1} \rfloor + 1) \rfloor + 17.
    \]
    Observe that $r_i(\lfloor q^{-1} \rfloor + 1) - 16/q \geq n_0 + (2i - 1)\Delta$, and $\sqrt{r_i} \leq \sqrt{N_0/(\lfloor q^{-1} \rfloor /2 )} \ll \Delta / (\lfloor q^{-1} \rfloor + 1)$ for all $q$ small.
    We are looking for $r_i$ such that there is a shell of radius $r_i$ in which every vertex $x$ corresponds to a helpful box $Q_x$.
    The existence of such a shell of radius $r_i$ depends only on the colors of the vertices in $\{u \in \bZ^2 : r_i \leq \|u\|_1 \leq r_i + 2\sqrt{r_i} \}$, and therefore only on the states of vertices within $A_i$.
    Moreover, notice that vertices $x, y$ with $\|x - y\|_\infty \geq 25$ are painted independently.
    Thus, by \cite{LiggettSchonmannStacey}, the configuration of black vertices dominates a product measure of density $b_1$ (as in Proposition \ref{prop:shellsatisfiesproperties}) provided that $\epsilon > 0$ in Lemma \ref{lem:searchinggoodvertices} is small enough, $k$ is chosen appropriately, and $p, q$ are sufficiently small.
    It follows from Proposition \ref{prop:shellsatisfiesproperties} that in this case, such a desirable shell of radius $r_i$ exists with probability at least $1/2$.
    As discussed in Remark \ref{rmk:poscorr}, the existence of such a shell of radius $r_i$ is independent of the existence of the $8$ outer supportive vertices whose corresponding diagonal neighbors comprise the set $K \subset A_i$ in (\ref{def:Z}).
    As illustrated in the proof of Lemma \ref{lem:searchinggoodvertices}, the probability that a vertex is supportive is at least $c_0 q^2$ for some constant $c_0 > 0$ depending on $m$ and $k$, provided that $p, q$ are sufficiently small. 
    It follows that the $8$ outer vertices are all supportive with probability at least $c_0^{8}q^{16} \gg q^{17}$.
    (In fact, for the holding supportive vertices of the fortresses, we may set $k=1$ in their ``$202$" configurations, so one can take $c_0 = 1/2$.)
    Hence, the set $Z$ defined in (\ref{def:Z}) exists with convex corners $U \cup K \subset A_i$ with probability at least $q^{17}/2$ for each $i$, provided that $p, q$ are sufficiently small.
    Then, since the annuli $A_i$ are separated, the probability that such a set $Z$ does not exist in $A_i$ for all $i = 1,\dots, T$ is at most $(1 - q^{17}/2)^{q^{-19}/2} \leq \exp(-1/(4q^2))$.
    Now, by Lemma \ref{lem:PR1PR2}, if the set $Z$ constructed as in (\ref{def:Z}) exists, then it satisfies conditions (PR1) and (PR2).
    It is clear from the construction that $Z$ contains the origin, and the last claim follows from the observation that $n_0 + 2T\Delta \leq N_0$.
\end{proof}

\begin{proof}[Proof of Theorem \ref{thm:2sloseinmodified-stronger}]
Choose $s = 41$ in Lemma \ref{lem:PR3} so that (PR3) is met.
Then $m = 32s < 1500$.
Combining Proposition \ref{prop:restriction}, Lemma \ref{lem:PR3}, and Lemma \ref{lem:PRexists} completes the proof.
\end{proof}

\section{Discussion and open problems}


\noindent 1. {\it Intermediate phase\/}.
    Theorem~\ref{thm:phase-transition} does not resolve 
    what happens when 
    $p^2/(\log(1/p))^2\ll q \ll p^2$ for the standard variant and when
   $ p^2/(\log(1/p))^2\ll q \ll p^2/\log(1/p)$ for the modified variant. While inconclusive, simulations leave open the possibility of a double phase transition, that is, that 
   $1$s predominate in the final configuration for the standard variant 
   at some range of densities $p$ and $q$. (See the last frame of Figure~\ref{fig:sim}.)
   
   \bigskip
   
\noindent 2. {\it Nucleation of $2$s\/}.
Our method of proving Theorem~\ref{thm:2swininstandard} uses an extraordinarily unlikely construction of a nucleus of $2$s that eventually spreads to the origin. We therefore ask the 
following natural analogue of the classical 
question resolved in \cite{Holroyd2003}. 
Assume that $T_2$ is the first time 
the origin is in state $2$. Assuming $q\ll p^2/(\log(1/p))^2$, 
what is the likely size of $T_2$? In particular, is it true that 
$\P(T_2<\exp(C/q))\to 1$ for sufficiently large $C$? (Clearly, 
$\P(T_2<\exp(c/q))\to 0$ if $c$ is small enough, which can be obtained by switching all initial $0$s to $1$s and using the result in \cite{Holroyd2003}.)

 \bigskip
 
\noindent 3. {\it Cyclic rules\/}.
As mentioned in the Introduction, one may consider other 
update graphs than the oriented linear graph on three states. 
One possibility is the {\it standard  cyclic\/} rule, in which (D2) is replaced by the following: if $\xi_t(x)=2$, then $\xi_{t+1}(x)=0$ if $N_0(x,t)\ge 2$ and $\xi_{t+1}(x)=2$ otherwise. Given the now-symmetric role of three states, the most inviting initial 
state is when they have equal densities, that is, $p=q=1/3$ 
\cite{Fisch1991}. In our context, however, it is also natural to 
consider small $(p,q)$. Simulations are far from conclusive, but 
suggest that $1$s may always dominate 
in the final configuration. 


 \bigskip
 
\noindent 4. {\it Competition rules\/}.
Another possibility is the {\it standard competition rule\/} with the same 
three states $0$, $1$, $2$. In these dynamics,  
$\xi_t(x)=i$ implies $\xi_{t+1}(x)=i$  for 
$i=1,2$, while these two states compete
in case $\xi_t(x)=0$: if $N_i(x,t)\ge 2$
for exactly one $i\in\{1,2\}$, then $\xi_{t+1}(x)=i$, otherwise 
$\xi_{t+1}(x)=0$. 
Such rules were 
introduced in \cite{Gravner1997} as {\it multitype threshold voter models\/}. 
Our methods in fact establish a double phase transition in this, much simpler case. Note the similarity to Theorem 1.1 in \cite{GS24} on another 
competition model.
\begin{theorem}\label{thm:competition}
Assume $(p,q)\to (0,0)$. Then 
\begin{itemize}
\item if $q\ll p^{2}/(\log(1/p)^2$, then $\P(\text{the origin is eventually in state $1$})\to 1$;
\item if $p^2\ll q\ll p^{1/2}$, then $\P(\text{the origin  remains in state $0$ forever})\to 1$; and 
\item if $p^{1/2}\log(1/p)\ll q$, then $\P(\text{the origin is eventually in state $2$})\to 1$.
\end{itemize}
\end{theorem}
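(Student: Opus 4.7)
The plan is to reduce each of the three parts to existing machinery: (i) and (iii) to the proof of Theorem~\ref{thm:0slose} (with symmetric roles of $1$ and $2$ in (iii)), and (ii) to a pair of monotone comparisons with standard polluted bootstrap percolation together with the Gravner--McDonald result cited in the paper's Theorem~\ref{thm:phase-transition}. The unifying observation is that in the competition rule, the $0 \to 1$ transition agrees with standard threshold-$2$ bootstrap percolation whenever the local neighborhood of the flipping vertex is free of $2$s, and symmetrically for $0 \to 2$; this makes the $2$-free highway construction of Section~\ref{sec:eliminate-0s} applicable directly.

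For Part (i), I would first reuse the $p$-boxes from Section~\ref{sec:eliminate-0s} and call a $p$-box $1$-crossable when every row and column contains an initial $1$ and the box together with its ring of eight surrounding $p$-boxes contains no initial $2$, and a $1$-nucleus when it is internally spanned by $1$s. The hypothesis $q \ll p^2/(\log(1/p))^2$ ensures (a fortiori $q \le p^2$) that Lemmas~\ref{lem:p-boxcrossablewhp}--\ref{lem:nointerventionof2s} apply without modification, yielding with high probability a highway of $1$-crossable boxes from a $1$-nucleus to the origin and simultaneously ruling out any significant spread of $2$s in the relevant region before time $M = \lfloor e^{C/p}\rfloor$. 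Along the highway, the competition rule reduces to standard threshold-$2$ bootstrap percolation for $1$s, so $1$s transmit to the origin in $o(M)$ steps as in the proof of Theorem~\ref{thm:0slose}. Part (iii) is the mirror image: swap the roles of $1$ and $2$ and verify the symmetric hypothesis $p \ll q^2/(\log(1/q))^2$. From $p^{1/2}\log(1/p) \ll q$ one immediately has $p(\log(1/p))^2 \ll q^2$, and $q \gg p^{1/2}$ gives $\log(1/q) \le \tfrac{1}{2}\log(1/p)$ for small $p$, so $(\log(1/q))^2 \le \tfrac{1}{4}(\log(1/p))^2$, and the required inequality follows.

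For Part (ii), I would define the polluted bootstrap dynamics $\zeta_t$ in which initial $2$s are permanent obstacles, initial $1$s are active, and each other vertex flips to $1$ upon acquiring two $1$-neighbors, and then establish the pointwise coupling $\xi_t(x) = 1 \Rightarrow \zeta_t(x) = 1$ by induction on $t$: a new $1$ in $\xi$ at time $t+1$ requires two $1$-neighbors at time $t$, which by the inductive hypothesis are also $1$s in $\zeta_t$. The Gravner--McDonald bound \cite{GravnerMcDonald}, cited in the proof of Theorem~\ref{thm:phase-transition}, then gives that $q \gg p^2$ forces the origin almost never to become $1$ in $\zeta$, hence not in $\xi$. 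The symmetric coupling $\xi_t(x) = 2 \Rightarrow \eta_t(x) = 2$ with $1$s as obstacles, combined with $q \ll p^{1/2}$ (that is, $p \gg q^2$), gives the analogous statement for state $2$, so the origin stays in state $0$ with high probability.

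I expect the main technical point to be verifying the Part (ii) coupling in the presence of the competition rule's non-monotonicity. The argument compares each species separately to a standard two-state polluted dynamics, and it is crucial that the competition rule's only non-standard behavior, namely that a $0$ with both $N_1 \ge 2$ and $N_2 \ge 2$ stays $0$, only slows the growth of either species relative to the polluted counterpart; once this is in place, the cited Gravner--McDonald bounds do the rest of the work.
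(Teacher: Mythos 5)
Your proposal is correct and follows essentially the same route as the paper: for parts (i) and (iii) you reuse the Section~\ref{sec:eliminate-0s} highway/nucleus construction together with the Aizenman--Lebowitz-type control of the spread of $2$s (which the paper packages as the ``winning box'' conditions (W1)--(W2)), and you obtain (iii) by the same $1\leftrightarrow 2$ symmetry with the same logarithmic condition check. For part (ii) the paper simply invokes the Gravner--McDonald polluted bootstrap result; your explicit one-sided couplings of each species with the static-obstacle polluted dynamics are precisely the easy domination argument that this citation implicitly relies on.
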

\begin{proof}
    The middle claim follows directly from \cite{GravnerMcDonald}, so by symmetry we only need to prove the first claim. Consider the 
    square box $B$ of side length $\lfloor e^{C/p}\rfloor$ around the origin, for a constant $C$ specified below. Call this box
    {\bf winning\/} if the two conditions (W1) and (W2) below are satisfied.  As in Section~\ref{sec:eliminate-0s}, tile $\bZ^2$ by $p$-boxes.

    \begin{itemize}[leftmargin=1cm]
        \item [(W1)]  Replace all $1$s on $B$ by $0$s. Also, turn the state of {\it all\/} 
        sites outside $B$ to $0$ (that is, consider the internal dynamics in $B$). The longest side of any rectangles of $2$ in the final state of the resulting dynamics is at most $1/p$. 
        \item [(W2)]  The $p$-box containing the origin is $1$-crossable and is connected through a 
        path of $1$-crossable boxes of length at most $\exp(C/(3p))$ 
        to a $1$-nucleus.   
    \end{itemize}
Condition (W2) is satisfied with high probability by Lemma~\ref{lem:find1-nucleus} if $C$ is large enough. Condition (W1) is satisfied with high probability for any $C$, as $q\ll p$. Therefore,  
    $B$ is winning with high probability. Finally, in a winning box, the entire $p$-box containing the origin is eventually in state $1$, by 
    the same arguments as in Sections~\ref{sec:eliminate-0s} and~\ref{sec:occupation-2-standard} .
\end{proof}
It is an open problem whether the $\log$ factors in Theorem~\ref{thm:competition} can be removed. 


 \bigskip
 
\noindent 5. {\it General bootstrap rules\/}.
A general framework for two-stage bootstrap dynamics on $\bZ^d$ of the 
type given by (D0)--(D2)
involves two finite families $\cU_1$ and $\cU_2$ of finite subsets 
of $\bZ^d$. Then, each $0$ decides whether to update to a $1$ according 
to the $\cU_1$-bootstrap rule, and simultaneously each $1$ decides whether 
to update to a $2$  according 
to the $\cU_2$-bootstrap rule. Again, $2$s never change. Thus, 
the two families encode an arbitrary monotone solidification cellular automaton for
each of the two transitions
\cite{Bollobs2022, BBMS2022, BBMS22, Ghosh}. 

Assuming $(p,q)\to(0,0)$ along some curve, one can then ask the 
question whether the final density of a state $i\in\{0,1,2\}$ ``wins'' in the sense that its density
approaches $1$. To focus on a 
particular issue, 
we define $\gamma_2=\gamma_2(\cU_1,\cU_2)\in[0,\infty]$ 
to be the infimum over all 
powers $\gamma>0$ such that $q\le p^\gamma$ implies that 
the final density of $2$s approaches $1$ as $p\to 0$. 
In our cases, $\gamma_2=2$ for the standard variant and $\gamma_2=\infty$
for the modified one. We now provide a simple example with $\gamma_2=0$.

If $d=2$ and $\cU_1$ and $\cU_2$ both comprise the four singleton
sets of nearest neighbors, we get the threshold $1$ version of (D0)--(D2). 
In this case, we claim that 
$\gamma_2=0$. Indeed, 
if a connected set of $0$s and $1$s in the initial configuration 
contains at least one $1$, then every site 
is in state 
$1$ at some time $t\ge 0$. Therefore, 
this is true for the infinite connected cluster 
of $0$s and $1$s in the initial state, if $q$ is small enough. This cluster must have a $2$
on its outside boundary, and therefore every one of its sites must eventually 
also turn into a $2$. Therefore, the final density of $2$s approaches $1$
as $q\to 0$, regardless of $p$. 

Determining $\gamma_2$, or even deciding if it has one of the 
two extreme values, appears to be a challenging problem. For example, 
consider nearest neighbor 
threshold growth in three dimensions given by appropriately changed (D0)--(D2): $N_i(x,t)$ counts sites in state $i$ 
among six nearest neighbors of $x$ at time $t$, and the threshold is $2$
or $3$. For threshold $2$ (whose polluted version was studied in \cite{GravnerHolroyd}), we do not have a guess for $\gamma_2$.
For threshold $3$, 
\cite{Ding} implies that 
$\gamma_2\ge 3$ and it seems possible that equality holds. For the three-dimensional modified 
model with threshold $3$, the method from \cite{GravnerHolroydSivakoff} implies that $\gamma_2=\infty$, as a $0$ which has two nearest neighbors $2$s in, say, $x$-coordinate can never become a $1$. In the initial state, 
such permanent $0$s have density $q^2$, far above the 
density on the order $q^3$ which suffices to block the spread of $2$s. 

 \bigskip
 
\noindent 6. {\it Multicolor rules\/}. We can generalize
the standard rule to any number $\kappa\ge 2$ of nonzero 
colors, with oriented linear graph on $\kappa+1$
states as the update graph. That is,  $\xi_t\in\{0,1,\ldots,\kappa\}^{\bZ^2}$, $\xi_0$ is a product measure with 
$\P(\xi_0(x)=i)=p_i>0$, and for $t\ge 0$: 
\begin{itemize}[leftmargin=1.25cm]
\item[(MD1)] if $\xi_t(x)=i<\kappa$, then $\xi_{t+1}(x)=i+1$ if $N_{i+1}(x,t)\ge 2$ and $\xi_{t+1}(x)=i$ otherwise;
\item[(MD2)] if $\xi_t(x)=\kappa$, then $\xi_{t+1}(x)=\kappa$.
\end{itemize}
Within the theme of the present paper, we inquire about conditions that 
ensure most sites reach the terminal state $\kappa$. For example, 
do there exist constants $\gamma_i>0$ so that $p_{i}\ll p_{i-1}^{\gamma_{i}}$,  $i=2,\ldots,\kappa$, implies that 
$\P(\text{the origin is eventually in state $\kappa$})\to 1$
as $p_1\to 0$? It follows from Theorem~\ref{thm:2sloseinmodified} that 
if the standard variant in (MD1) is replaced by its modified variant for at least one $i\in[0,\kappa-2]$, 
then
$\P(\xi_\infty(0)\le i+1)\to 1$ as $\max(p_{i+1},\ldots, p_\kappa)\to 0$.





\section*{Acknowledgments} 
All three authors were partially supported by the Simons Foundation. JG was also partially supported by the Slovenian Research Agency research program P1-0285.

\bibliographystyle{alpha}
\bibliography{refs}

\newcommand{\etalchar}[1]{$^{#1}$}
\begin{thebibliography}{BBDCM12}

\bibitem[AL88]{AizenmanLebowitz}
M.~Aizenman and J.~L. Lebowitz.
\newblock Metastability effects in bootstrap percolation.
\newblock {\em Journal of Physics A: Mathematical and General}, 21(19):3801, October 1988.

\bibitem[BBDCM12]{Balogh2012}
J.~Balogh, B.~Bollobás, H.~Duminil-Copin, and R.~Morris.
\newblock The sharp threshold for bootstrap percolation in all dimensions.
\newblock {\em Transactions of the American Mathematical Society}, 364(5):2667–2701, May 2012.

\bibitem[BBMS22a]{BBMS2022}
P.~Balister, B.~Bollobás, R.~Morris, and P.~Smith.
\newblock The critical length for growing a droplet, 2022.
\newblock {\tt Arxiv:2203.13808}.

\bibitem[BBMS22b]{BBMS22}
P.~Balister, B.~Bollobás, R.~Morris, and P.~Smith.
\newblock Universality for monotone cellular automata, 2022.
\newblock {\tt Arxiv:2203.13806}.

\bibitem[BDMS22]{Bollobs2022}
B.~Bollobás, H.~Duminil‐Copin, R.~Morris, and P.~Smith.
\newblock Universality for two‐dimensional critical cellular automata.
\newblock {\em Proceedings of the London Mathematical Society}, 126(2):620–703, November 2022.

\bibitem[BG89]{BramsonGriffeath2}
M.~Bramson and D.~Griffeath.
\newblock {Flux and Fixation in Cyclic Particle Systems}.
\newblock {\em The Annals of Probability}, 17(1):26 -- 45, 1989.

\bibitem[BHH25]{BresarHedzetHenning}
B.~Brešar, J.~Hedžet, and M.~A. Henning.
\newblock On polluted bootstrap percolation in cartesian grids, 2025.
\newblock {\tt Arxiv 2506.18345}.

\bibitem[BHJ92]{BHJ1992}
A.~D. Barbour, L.~Holst, and S.~Janson.
\newblock {\em Poisson Approximation}.
\newblock Oxford Studies in Probability. Clarendon Press, Oxford, England, February 1992.

\bibitem[CBA13]{CourtBlytheAllen}
{S. J.} Court, {R. A.} Blythe, and {R. J.} Allen.
\newblock Parasites on parasites: Coupled fluctuations in stacked contact processes.
\newblock {\em European Physical Society Letters (EPL)}, 101(5), March 2013.

\bibitem[DDG{\etalchar{+}}10]{LipPerc}
N.~Dirr, P.~Dondl, G.~Grimmett, A.~Holroyd, and M.~Scheutzow.
\newblock {Lipschitz percolation}.
\newblock {\em Electronic Communications in Probability}, 15(none):14 -- 21, 2010.

\bibitem[DYZ24]{Ding}
J.~Ding, P.~Yang, and Z.~Zhuang.
\newblock Dynamical random field ising model at zero temperature, 2024.
\newblock {\tt Arxiv:2410.20457}.

\bibitem[FGG91]{Fisch1991}
R.~Fisch, J.~Gravner, and D.~Griffeath.
\newblock Threshold-range scaling of excitable cellular automata.
\newblock {\em Statistics and Computing}, 1(1):23–39, September 1991.

\bibitem[Fis90]{Fisch}
R.~Fisch.
\newblock {Cyclic cellular automata and related processes}.
\newblock {\em Physica D Nonlinear Phenomena}, 45(1-3):19--25, Sept 1990.

\bibitem[GG97]{Gravner1997}
J.~Gravner and D.~Griffeath.
\newblock Multitype threshold growth: convergence to poisson-voronoi tessellations.
\newblock {\em The Annals of Applied Probability}, 7(3), August 1997.

\bibitem[GH10]{GrimmettHolroyd10}
G.~R. Grimmett and A.~E. Holroyd.
\newblock {Plaquettes, Spheres, and Entanglement}.
\newblock {\em Electronic Journal of Probability}, 15(none):1415 -- 1428, 2010.

\bibitem[GH12]{GrimmettHolroyd12}
G.~R. Grimmett and A.~E. Holroyd.
\newblock {Geometry of Lipschitz percolation}.
\newblock {\em Annales de l'Institut Henri Poincaré, Probabilités et Statistiques}, 48(2):309 -- 326, 2012.

\bibitem[GH19]{GravnerHolroyd}
J.~Gravner and A.~E. Holroyd.
\newblock {Polluted bootstrap percolation with threshold two in all dimensions}.
\newblock {\em Probab. Theory Relat. Fields}, 175:467 -- 486, 2019.

\bibitem[GHLS25a]{GHLS2025}
J.~Gravner, A.~E. Holroyd, S.~Lee, and D.~Sivakoff.
\newblock Polluted modified bootstrap percolation.
\newblock {\em Electronic Communications in Probability}, 30:1--10, 2025.

\bibitem[GHLS25b]{GravnerHolroydLeeSivakoff}
J.~Gravner, A.~E. Holroyd, S.~Lee, and D.~Sivakoff.
\newblock Polluted modified bootstrap percolation, 2025.

\bibitem[Gho22]{Ghosh}
A.~Ghosh.
\newblock Polluted {$\cU$}-bootstrap percolation, 2022.
\newblock Ph.D. Thesis, The Ohio State University, ProQuest Dissertation Publishing 30360081.

\bibitem[GHS21]{GravnerHolroydSivakoff}
J.~Gravner, A.~E. Holroyd, and D.~Sivakoff.
\newblock {Polluted bootstrap percolation in three dimensions}.
\newblock {\em The Annals of Applied Probability}, 31(1):218 -- 246, 2021.

\bibitem[GM97]{GravnerMcDonald}
J.~Gravner and E.~McDonald.
\newblock {Bootstrap percolation in a polluted environment}.
\newblock {\em Journal of Statistical Physics}, 87:915 -- 927, 1997.

\bibitem[GS20]{GS2020}
J.~Gravner and D.~Sivakoff.
\newblock Bootstrap percolation on the product of the two-dimensional lattice with a hamming square.
\newblock {\em The Annals of Applied Probability}, 30(1), February 2020.

\bibitem[GS24]{GS24}
J.~Gravner and D.~Sivakoff.
\newblock Competing deterministic growth models in two dimensions, 2024.
\newblock {\tt Arxiv:2405.14723}.

\bibitem[Har23]{Hartarsky2023}
I.~Hartarsky.
\newblock Sensitive bootstrap percolation second term.
\newblock {\em Electronic Communications in Probability}, 28:1–7, January 2023.

\bibitem[HM19]{Hartarsky2019}
I.~Hartarsky and R.~Morris.
\newblock The second term for two-neighbour bootstrap percolation in two dimensions.
\newblock {\em Transactions of the American Mathematical Society}, 372(9):6465–6505, August 2019.

\bibitem[Hol03]{Holroyd2003}
A.~E. Holroyd.
\newblock Sharp metastability threshold for two-dimensional bootstrap percolation.
\newblock {\em Probability Theory and Related Fields}, 125(2):195–224, February 2003.

\bibitem[HT24]{Hartarsky2024}
I.~Hartarsky and A.~Teixeira.
\newblock Bootstrap percolation is local, {\tt arxiv 2404.07903}, 2024.
\newblock {\tt Arxiv 2404.07903}.

\bibitem[Kor05]{Kordzakhia}
G.~Kordzakhia.
\newblock {The Escape Model on a Homogeneous Tree}.
\newblock {\em Electronic Communications in Probability}, 10(none):113 -- 124, 2005.

\bibitem[Lan24]{Lanchier}
N.~Lanchier.
\newblock {\em Stochastic Interacting Systems in Life and Social Sciences}.
\newblock De Gruyter, Berlin, Boston, 2024.

\bibitem[LSS97]{LiggettSchonmannStacey}
T.~M. Liggett, R.~H. Schonmann, and A.~M. Stacey.
\newblock {Domination by product measures}.
\newblock {\em The Annals of Probability}, 25(1):71 -- 95, 1997.

\bibitem[Mor17]{Morris2017}
R.~Morris.
\newblock Bootstrap percolation, and other automata.
\newblock {\em European Journal of Combinatorics}, 66:250–263, December 2017.

\end{thebibliography}

\end{document}